\newtheorem{prop}{Proposition}[section]
\newtheorem{coro}[prop]{Corollary}
\newtheorem{lem}[prop]{Lemma}
\newtheorem{rem}[prop]{Remark}
\newtheorem{theoi}{Theorem}
\newtheorem{coroi}[theoi]{Corollary}
\theoremstyle{plain}  
\newtheorem{theorem}[prop]{Theorem}
\newtheorem{lemma}[prop]{Lemma}     
\newtheorem{corollary}[prop]{Corollary}      
\newtheorem{proposition}[prop]{Proposition}
\newtheorem{definition}[prop]{Definition}         
\theoremstyle{remark}      
\newtheorem{remark}[prop]{Remark}
\newcommand{\R}{\mathbb R}
\newcommand{\Q}{\mathbb Q}
\newcommand{\C}{\mathbb C}
\newcommand{\N}{\mathbb N}
\newcommand{\Z}{\mathbb Z}
\newcommand{\Mg}{\mathcal{M}_g}
\newcommand{\Mgnbar}{\overline{\mathcal{M}_{g}^n}}
\newcommand{\Mod}{{\rm{Mod}}} 
\newcommand{\PMod}{{\rm{PMod}}}           
\begin{document}

\title
[Algebro-geometric subgroups]
{Algebro-geometric subgroups of mapping class groups}
\date{\today}

\author[P.~Eyssidieux]{Philippe Eyssidieux}
\address{Univ. Grenoble Alpes, CNRS, Institut Fourier,
38000 Grenoble, France}
\email{philippe.eyssidieux@univ-grenoble-alpes.fr}
\urladdr{http://www-fourier.univ-grenoble-alpes.fr/$\sim$eyssi/}

\author[L.~Funar]{Louis Funar}
\address{Univ. Grenoble Alpes, CNRS, Institut Fourier,
38000 Grenoble, France}
\email{louis.funar@univ-grenoble-alpes.fr} 
\urladdr{http://www-fourier.univ-grenoble-alpes.fr/$\sim$funar/}

\begin{abstract} 
We provide new constraints for algebro-geometric subgroups of mapping class groups, namely images of fundamental groups  of curves under complex algebraic maps to the moduli space of smooth curves. Specifically, we prove that the restriction of an infinite, finite rank unitary representation of the mapping class group of a closed surface to an algebro-geometric subgroup should be infinite, when the genus is at least 3. In particular the restriction of most Reshetikhin-Turaev representations of the mapping class group to such subgroups is infinite. To this purpose we use deep work of Gibney, Keel and Morrison to constrain the Shafarevich morphism associated to a linear representation of the fundamental group of the compactifications of the moduli stack of smooth curves studied in our previous work. As an application we prove that universal covers of most of these compactifications are Stein manifolds.

\vspace{0.2cm}
\noindent 2010 MSC Classification: 14 H 10, 32 Q 30 (Primary) 32 J 25, 32 G 15,   14 D 23,  57 M 07, 20 F 38, 20 G 20, 22 E 40 (Secondary).  
 
\vspace{0.2cm} 
\noindent Keywords: K\"ahler Orbifolds, K\"ahler Groups, Shafarevich morphism,   Mapping class group,
Moduli space of Curves, Deligne-Mumford stacks, Topological Quantum Field Theory.	
\end{abstract}

\maketitle

\tableofcontents

\section{Introduction and statements}
\subsection{Motivation}

Recall that the so-called AMU conjecture \cite{AMU} claims that the image of a pseudo-Anosov mapping class by a Reshetikhin-Turaev representation of large enough level has infinite order.
What we will  show here is a strengthening of a consequence of this conjecture. Namely,  such infiniteness statement holds uniformly for {\em algebro-geometric} subgroups of the mapping class groups, i.e. subgroups
which are images of surface groups, under the condition that they are realized by algebraic maps between the corresponding spaces.
In particular,  this extends results of Koberda and Santharoubane from \cite{KoSa}. The idea of the proof is to use geometric tools such as Shafarevich morphisms \cite{E6} to promote the infiniteness from mapping class groups to  their subgroups. To this purpose we consider the  uniformizable stacky compactifications  of the stack of smooth curves from \cite{mcgorb} and build on classical results about divisors on the moduli space of curves \cite{GKM}. We originally conjectured the infiniteness result above  since it can be applied to show that the universal covering space of these compactifications are Stein manifolds with a few exceptions.

Our results can also be applied to
provide restrictions on the fundamental groups of Kodaira fibrations.
Recall that a {\em Kodaira fibration} is a smooth projective surface $X$ admitting a  holomorphic map
 $\phi: X\to C$ to a smooth algebraic curve $C$,
 which is a smooth fiber bundle such that the fibres are connected. Kodaira and Kas showed that
the genus of the base is then $g\geq 2$ and the genus of fiber is $h\geq 3$ unless the fibration is isotrivial, i.e. isomorphic to a product when pulled-back to a finite \'etale covering of $C$.
 Moreover,  we have a short exact sequence:
\[ 1 \to \pi_1(\Sigma_h) \to \pi_1(X) \to \pi_1(\Sigma_g)\to 1, \]
where $\Sigma_q$ denotes the closed orientable surface of genus $q$.
Viewing $\phi$ as a nonisotrivial family of smooth curves, it provides a nonconstant
holomorphic map $f:C\to {\mathcal M}_g$ into the moduli space  ${\mathcal M}_g$ of smooth genus $g$ curves, whose induced homomorphism $f_*:\pi_1(C)\to \pi_1( {\mathcal M}_g)$ coincides with
the homomorphism $\pi_1(\Sigma_g)\to {\rm Out}^+(\pi_1(\Sigma_h))$ associated to the short exact sequence above.
Note that the outer action $f_*$ determines uniquely the extension and thus a surface
bundle over a surface, up to diffeomorphism.
Catanese (\cite{Ca}, Question 16) asked about conditions needed to be satisfied
by such group extensions in order to occur as the fundamental group of a Kodaira fibration.
The Torelli theorem yields that a Kodaira fibration for which
$f_*(\pi_1(C))$ belongs to the Torelli subgroup of the mapping class group $\Mod(\Sigma_g)$  is isotrivial.  This was considerably strengthened by Arapura in \cite{Ara},  who used Hodge theory to show that restrictions
of certain Prym-type partial representations to algebro-geometric subgroups are infinite  unless $\phi$ is isotrivial. By our results,   $f_*(\pi_1(C))$ cannot likewise be virtually contained in the normal subgroup generated by the  $p$-th powers of Dehn twists for $p\ge 5$ odd or in the larger subgroup which is the kernel of the level $p$ Reshetikhin-Turaev quantum representation,    unless $\phi$ is isotrivial.

\subsection{Definitions}
All algebraic varieties and stacks considered in these notes will be over $\C$ and we will mainly think of them through their analytification as complex-analytic objects. We use the same notations and conventions as in \cite{mcgorb}. 

Let $g,n \in \N$ such that $2-2g-n<0$ and let $p\in \N^*$ be a positive integer. 
We denote by $\Mgnbar$ the moduli stack of genus $g$ stable curves with $n$ marked points and by 
$\overline{M_{g}^n}$ its coarse moduli space. We omit $n$ in the notation when $n=0$. 
Further   $\mathcal{M}^{n}_{g}$ will denote the moduli stack of smooth curves of genus $g$ and $n$ punctures and
$M_{g}^n\subset \overline{M_{g}^n}$ the Zariski open subset consisting of isomorphism classes of smooth curves.
The stacks $\mathcal{M}^{n}_{g}$ are separated, smooth, Deligne-Mumford and their moduli spaces are quasiprojective \cite{Knu,Knu2}, 
but non proper except in the trivial case  $g=0,n=3$. 

Set  $\Mod(\Sigma_g^n)$ for the mapping class group of the 
genus $g$ orientable surface $\Sigma_g^n$ with $n$ punctures (or marked points) and 
$\PMod(\Sigma_g^n)$ for the {\em pure} mapping class group consisting of the isotopy classes which fix 
{\em pointwise} the punctures. Then $\PMod(\Sigma_g^n)$ and  $\Mod(\Sigma_g^n)$ also occur as the 
fundamental groups of the analytification $\mathcal{M}^{n \ an}_{g}$ of the moduli stack of curves (see \cite{DMu}) and 
$[S_n\backslash{\mathcal{M}^{n \ an}_{g}}]$, respectively, where $S_n$ acts on $\mathcal{M}^{n \ an}_{g}$ by permuting the markings.

There are only finitely many, say $N_{g,n}$ 
conjugacy classes of Dehn twists, or equivalently, 
distinct orbits of essential simple closed curves on $\Sigma_g^n$ under the mapping class group action.
For instance $N_{g}=\lfloor \frac{g}{2}\rfloor+1$, $N_{g,1}=g$. 
Simple closed curves orbits are determined by the homeomorphism type of the 
complementary subsurface, which might be either connected for non-separating curves 
or disconnected and hence determined by the set/pair of genera of its two components, for separating curves. 
Fix an enumeration of these homeomorphism types starting with the non-separating one. For each  vector of positive integers $\mathbf k=(k_0,k_1,k_2,\ldots,k_{N_{g,n}-1})$ we define
$\Mod(\Sigma_g^n)[\mathbf k]$ as the (normal) subgroup generated 
by $k_0$-th powers of Dehn twist along non-separating simple closed curves and 
$k_j$-th powers of Dehn twists along  simple closed curves of type $j$.
As a shortcut we use $\Mod(\Sigma_g^n)[k;m]$ for $\mathbf k=(k,m,m,m\ldots,m)$,
$\Mod(\Sigma_g^n)[k]$ for $\mathbf k=(k,k,\ldots,k)$ and
 $\Mod(\Sigma_g^n)[k;-]$ for $\mathbf k=(k;)$, where $k_i$ are absent for $i>0$.

For every ${\mathbf{k}} \in \mathbb{N}_{>0}^{N_{g,n}}$ the quotient $\PMod(\Sigma_{g}^n)\slash \Mod(\Sigma_{g}^n)[\mathbf k]$  
is the fundamental group of a smooth proper Deligne-Mumford stack ${\overline{\mathcal{M}_{g}^{n \ an}}}[\mathbf k]$ compactifying $\mathcal{M}_{g}^{n \ an}$
 whose coarse moduli space is the moduli space  
${\overline{{M}_{g}^{n \ an}}}$ of stable $n$-punctured curves of genus $g$, hence is projective by \cite{Knu2}.

Let $K_i$ denote the kernel of the homomorphism $\PMod(\Sigma_{g}^n)\to \PMod(\Sigma_{g}^{n-1})$ induced by the forgetful map
$\Sigma_g^n\to \Sigma_g^{n-1}$ which omits the $i$-th puncture. By Birman's exact sequence, $K_i$ is isomorphic to $\pi_1(\Sigma_g^{n-1})$, when $n\geq 1$ and  $\Sigma_g^{n-1}$ is neither a  sphere, a torus, a 1-punctured nor a 2-punctured sphere. We call $K_i$ the {\em geometric} surface subgroups of  $\PMod(\Sigma_{g}^n)$.

Recall that an algebraic family $f:C \to \Mg^n$ is {\em non-isotrivial} if and only $f_*(\pi_1(C))$ is infinite. 

\subsection{Restricting unitary representations}
The first result of this article is: 

\begin{theoi}\label{unitary}
Let $g\geq 3$, $\rho$ be a finite rank unitary  representation of $\PMod(\Sigma_{g}^n)$ with 
infinite image. When $n\geq 1$ we assume furthermore that the restriction of $\rho$ to every geometric surface subgroup is also infinite. Then  for every  non-isotrivial algebraic family  $f:C \to \Mg^n$ of $n$-punctured smooth genus $g$ curves the image $\rho \circ f_*(\pi_1(C))$ is infinite. In particular, the restriction of an infinite unitary representation of finite rank of the mapping class group of a closed surface to an algebro-geometric subgroup should be infinite, when $g\geq 3$.
\end{theoi}

\begin{remark}
Actually the same result holds more generally for any semi-simple finite dimensional representation of $\PMod(\Sigma_{g}^n)$ which factors through some quotient 
$\PMod(\Sigma_{g}^n)\slash \Mod(\Sigma_{g}^n)[p]$. 
\end{remark}

This result can be effectively used to constrain monodromy groups of non-isotrivial holomorphic maps $f:C\to \Mg^n$  of smooth $n$-pointed curves  of genus $g$, defined on a hyperbolic Riemann surface $C$. Shiga has proved in (\cite{Shi}, Thm. 1) that $f_*(\pi_1(C))$ is irreducible and further Daskalopoulos and Wentworth (\cite{DW}, Thm. 5.7) improved this for $n=0$ to the  effect that  $f_*(\pi_1(C))$ is a sufficiently large subgroup of $\PMod(\Sigma_g)$, namely it contains two pseudo-Anosov mapping classes with distinct fixed points in the space of projective measured laminations. This condition is sufficient to realize the monodromy homomorphism
by a smooth equivariant harmonic map between $\widetilde C$ and the Teichm\"uller space (\cite{DW}, Cor. 5.5) in the case when 
$C$ is compact. 

\subsection{Restricting Reshetikhin-Turaev representations}
Quantum representations provide a large supply of linear representations which can be used to this purpose. 
Consider the Reshetikhin-Turaev  quantum representations $\rho_{g,p, (\mathbf{i})}$ (in the version of \cite{BHMV},  notation of \cite[Section 3.1]{mcgorb})  for all possible colors of the boundary:
\[ \rho_{g,p}:=\prod_{\mathbf{i}\in \mathcal{C}_p^n}\rho_{g,p, (\mathbf{i})}\]
It is known that $\Mod(\Sigma_{g}^n)[p]$ is contained in the kernel of $\rho_{g,p}$ and thus 
$\rho_{g,p}$ are representations of $\pi_1(\overline{\mathcal{M}_{g}^n}[p])=\PMod(\Sigma_{g}^n)\slash \Mod(\Sigma_{g}^n)[p]$, for odd $p\geq 5$. Moreover, images of 
Dehn twists have orders dividing $2p$ so that  $\Mod(\Sigma_{g}^n)[16p,2p]$ is contained in the kernel of $\rho_{g,p}$ and hence $\rho_{g,p}$ are representations of $\pi_1(\overline{\mathcal{M}_{g}^n}[16p,2p])=\PMod(\Sigma_{g}^n)\slash \Mod(\Sigma_{g}^n)[16p, 2p]$, for even $p\geq 10$, see \cite{mcgorb} for details.
The following statement is a consequence of Proposition \ref{theointro2} and was open  to the best of our knowledge: 

\begin{theoi} \label{corointro1} Let $g,n$ with $g\geq 2$. 
We assume that either $p\geq 5$ is odd or $p\ge 10$ is even and $(g,n,p)\neq (2,0,12)$. 
Then for every non-isotrivial algebraic family 
$f:C \to \Mg^n$  of smooth $n$-pointed curves  of genus $g$, the Reshetikhin-Turaev representation $\rho_{g,p}$ restricted to the image $f_*(\pi_1(C))$ has infinite image. In particular, under the same assumptions on ($g,p)$ the image of the fundamental group of a positive-dimensional algebraic subvariety of $\mathcal{M}_{g}$ is infinite under the Reshetikhin-Turaev representations $\rho_{g,p}$  of the mapping class group.
\end{theoi}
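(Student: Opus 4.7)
The plan is to derive the statement from Theorem \ref{unitary} once its hypotheses are verified for the Reshetikhin-Turaev representation, together with an independent argument for the genus-$2$ case provided by Proposition \ref{theointro2}. From the discussion immediately preceding the theorem statement, $\rho_{g,p}$ is a finite-dimensional unitary representation that factors through the quotient $\PMod(\Sigma_g^n)/\Mod(\Sigma_g^n)[\mathbf{k}]$, with $\mathbf{k}=(p)$ for odd $p\geq 5$ and $\mathbf{k}=(16p,2p)$ for even $p\geq 10$; equivalently, $\rho_{g,p}$ is a unitary representation of $\pi_1(\overline{\mathcal{M}_g^n}[\mathbf{k}])$, the fundamental group of a smooth proper Deligne-Mumford compactification of $\mathcal{M}_g^n$.

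To invoke Theorem \ref{unitary} in the range $g\geq 3$, I would check two infiniteness statements: (i) the image $\rho_{g,p}(\PMod(\Sigma_g^n))$ is infinite, and (ii) when $n\geq 1$, the restriction of $\rho_{g,p}$ to each geometric surface subgroup $K_i$ is infinite. For (i), non-separating Dehn twists are sent to elements of order exactly $2p$, and products of two suitably chosen non-commuting Dehn twists have image of infinite order by a direct computation in the Witten-Reshetikhin-Turaev TQFT; in particular the image of $\PMod(\Sigma_g^n)$ is infinite for every $p$ in the stated range. For (ii), the relevant statement is the main result of Koberda-Santharoubane \cite{KoSa} together with its extension to the full range of admissible $p$. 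Granted (i) and (ii), Theorem \ref{unitary} directly yields infiniteness of $\rho_{g,p}(f_*(\pi_1(C)))$ for every non-isotrivial family $f:C\to\mathcal{M}_g^n$ with $g\geq 3$.

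The case $g=2$ lies outside the scope of Theorem \ref{unitary} and must be handled by Proposition \ref{theointro2}; this is the main obstacle. The proof of Theorem \ref{unitary} relies on nefness and bigness statements from Gibney-Keel-Morrison on $\overline{M_g}$ which are cleanest for $g\geq 3$, whereas $\overline{M_2}$ has a small Picard group and the hyperelliptic involution forces a separate analysis of the Shafarevich morphism associated to $\rho_{2,p}$ on $\overline{\mathcal{M}_2^n}[\mathbf{k}]$. The excluded triple $(g,n,p)=(2,0,12)$ should reflect a single coincidence in the divisor-class computations on $\overline{M_2}$ at level $12$---for instance the failure of a specific divisor built from $\rho_{2,12}$ to be big---and is exactly the kind of low-level obstruction that cannot be removed without further work. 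Finally, the second assertion of the theorem follows from the first: given a positive-dimensional algebraic subvariety $Y\subset\mathcal{M}_g$, one takes a complete intersection curve $C\subset Y$ through a smooth point, passes to a desingularization $\widetilde C\to C$, and observes that the induced map $\widetilde C\to\mathcal{M}_g$ is non-isotrivial and satisfies $f_*(\pi_1(\widetilde C))\subset\pi_1(Y)$, so that infiniteness of its image propagates to $\pi_1(Y)$.
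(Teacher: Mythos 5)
The main gap is your treatment of $g=2$, which is precisely the case where the theorem goes beyond Theorem \ref{unitary} -- and your diagnosis of why $g=2$ is special is wrong, so the case is left unproven. The hypothesis $g\geq 3$ in Theorem \ref{unitary} does not come from Gibney--Keel--Morrison: Theorem \ref{GKMtheorem} and Proposition \ref{gibney} hold for $g\geq 2$, and Proposition \ref{theointro2} is stated for $g\geq 2$. The restriction to $g\geq 3$ in Theorem \ref{unitary} comes solely from the Aramayona--Souto result \cite{AS} that an arbitrary unitary representation of $\PMod(\Sigma_g^n)$ factors through some quotient $\PMod(\Sigma_g^n)/\Mod(\Sigma_g^n)[p]$; for the Reshetikhin--Turaev representations this factorization is known directly for all $g\geq 2$, so the paper simply applies Proposition \ref{theointro2} uniformly for $g\geq 2$ -- no ``separate analysis of the Shafarevich morphism on $\overline{M_2}$'' and no divisor-class or bigness computation is involved. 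Likewise, the exclusion $(g,n,p)=(2,0,12)$ has nothing to do with a divisor built from $\rho_{2,12}$ failing to be big: it is exactly the exception in the uniformizability statement (Proposition \ref{eyfu}, from \cite{mcgorb}) for $\overline{\mathcal{M}_g^{n}}[2p,\,p/\mathrm{g.c.d.}(p,4)]$, and uniformizability is a standing hypothesis of Proposition \ref{theointro2} needed to construct the Shafarevich morphism for the stack. Since you never verify the hypotheses of Proposition \ref{theointro2} for $g=2$ (uniformizability, infinite image, infinite restrictions to the $K_i$) and instead sketch a divisor-theoretic analysis you do not carry out, the genus-two case of the theorem is missing from your argument.

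Two further points. First, $\rho_{g,p}$ is only a projective representation of $\PMod(\Sigma_g^n)$ for $g\geq 2$, so it cannot be fed as-is into Theorem \ref{unitary} or Proposition \ref{theointro2}; the paper fixes this by composing with the adjoint representation (equivalently working with a lift to a finite central extension), which is unitary, has infinite image exactly when $\rho_{g,p}$ does, and still factors through $\pi_1(\overline{\mathcal{M}_g^n}[\mathbf k])$. You should include this step. Second, your verification of the two infiniteness hypotheses is too loose: infiniteness of $\rho_{g,p}(\PMod(\Sigma_g^n))$ ``by a direct computation with two Dehn twists'' is exactly the nontrivial content of \cite{F,Mas} (and fails at exceptional low levels such as $\rho_{2,10}$), while for the geometric subgroups the paper does not invoke an unproved ``extension of \cite{KoSa} to all admissible $p$'' but argues via an essential one-punctured pair of pants and the infiniteness results of \cite{LF} (supplemented by the computations around Proposition \ref{infiniteness}). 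Your deduction of the ``in particular'' statement from the main one is fine in outline, but note that the non-isotriviality of the chosen curve (i.e.\ infiniteness of $f_*(\pi_1(\widetilde C))$ in the mapping class group) is what needs the standard bounded-domain/removable-singularity argument, not a mere observation.
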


The infiniteness result above was known for some  particular curves $C$ thanks to the series  of papers \cite{F,LF,KoSa,LPS,Mas}. 
The strength of our result lies in that the surface group is not required to be a normal subgroup of the mapping class group, as in the previous works. 

By Shiga's result (\cite{Shi}, Cor. 1)  we know that  $f_*(\pi_1(C))$ must contain a pseudo-Anosov mapping class, because it is an irreducible subgroup. Thus, for $p$ large enough depending on the family $f$, the statement above is a consequence of the AMU conjecture (see \cite{AMU}). Although our conclusion is weaker than the AMU conjecture, it actually works uniformly for every allowed value of the level $p$ and for any algebraic family of curves.


Recently Godfard established  the existence of complex variations of Hodge structures whose monodromy is given by the Reshetikhin-Turaev representations $\rho_{g,p}$  
and for more general modular tensor category whose associated representations are semi-simple
(see \cite{Go3}). Using classical results of Corlette (\cite{Cor}) and Simpson (\cite{Sim}) we deduce:
\begin{coroi}
Under the assumptions of Theorem \ref {corointro1}, the Zariski closure of $\rho_{g,p}(f_*(\pi_1(C)))$ is a positive dimensional semi-simple group of Hodge type.
\end{coroi}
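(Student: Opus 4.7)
The approach is to combine the infiniteness statement of Theorem~\ref{corointro1} with Godfard's result that the Reshetikhin-Turaev representations underlie polarized complex variations of Hodge structure (CVHS), and then to invoke the Corlette-Simpson characterization of CVHS monodromy groups.

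First I would apply Godfard's theorem: $\rho_{g,p}$ is the monodromy of a polarized CVHS on the stacky compactification $\Mgnbark$ of $\Mgn$, for the appropriate level vector $\mathbf{k}$ ($\mathbf{k}=(p,\ldots,p)$ in the odd case, $\mathbf{k}=(16p,2p,\ldots,2p)$ in the even case). Pulling back by $f$ gives a polarized CVHS on $C$ with monodromy $\rho_{g,p}\circ f_*$. Because $\Mod(\Sigma_g^n)[\mathbf{k}]$ lies in the kernel of $\rho_{g,p}$, the local monodromies of this CVHS around the punctures of $C$ (images of the Dehn twists around the loops encircling the cusps) have finite order dividing $2p$, so in particular are quasi-unipotent, and the pullback CVHS extends tamely to the smooth compactification of $C$.

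Second, by Theorem~\ref{corointro1} the image $\rho_{g,p}(f_*(\pi_1(C)))$ is infinite. Since $\rho_{g,p}$ is unitary, this image is contained in some $U(N)$, hence the representation $\rho_{g,p}\circ f_*$ is semisimple and its Zariski closure $G \subset GL_N(\C)$ is reductive; infiniteness then forces $\dim G \ge 1$.

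Third, I invoke the theorem of Corlette and Simpson: a semisimple representation of the fundamental group of a smooth quasi-projective variety which underlies a polarized CVHS with quasi-unipotent local monodromies at infinity has Zariski closure whose identity component is semisimple and of Hodge type. Corlette's theorem supplies the equivariant harmonic map for the reductive pullback representation; Simpson's classification of fixed points of the natural $\C^*$-action on the character variety (equivalently, on the moduli of Higgs bundles) provides the Hodge-type conclusion. Combining with the second step yields the corollary. The main technical point to check is that the quasi-projective, tame version of Simpson's theorem applies to $C$ with its punctures, and that Godfard's CVHS genuinely pulls back to a polarized tame CVHS on $C$; both are essentially automatic once Godfard's construction on $\Mgnbark$ is in hand, but deserve a precise citation.
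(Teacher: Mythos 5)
Your proposal follows essentially the same route as the paper, which deduces the corollary exactly by combining Godfard's result that $\rho_{g,p}$ underlies a complex variation of Hodge structure with the Corlette--Simpson theory and the infiniteness statement of Theorem \ref{corointro1}. The extra details you supply (quasi-unipotent local monodromy at the punctures, semisimplicity/reductivity of the pulled-back representation) are standard points the paper leaves implicit, so your argument is a correct fleshing-out of the paper's one-line proof.
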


Whether the results above could be extended to all finite rank semi-simple representations of $\PMod(\Sigma_g^n)$ with infinite restrictions to geometric surface subgroups  is still unknown.  Moreover the question to decide if $\rho_{g,p}(f_*(\pi_1(C)))$ is an arithmetic group is quite challenging.

Note that the algebraicity assumption is necessary here, since for any $p$ there exist sufficiently large subgroups of $\Mod(\Sigma_g^n)[p]$, which are therefore contained in the kernel of $\rho_{g,p}$. 

\begin{coroi}\label{nonholomorphic}
Let $g,n$ with $g\geq 2$.  
Assume that either $p\geq 5$ is odd or $p\ge 10$ is even and $(g,n,p)\neq (2,0,12)$. 
Then for every non-isotrivial algebraic family 
$f:C \to \Mg^n$  of smooth $n$-pointed curves  of genus $g$, the image of $f_*(\pi_1(C))$
in  $\PMod(\Sigma_g^n)/\Mod(\Sigma_g^n)[p]$ is infinite.
\end{coroi}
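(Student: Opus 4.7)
The plan is to deduce the corollary from Theorem \ref{corointro1}, which supplies that $\rho_{g,p}(f_*(\pi_1(C)))$ is infinite, by pushing that infiniteness down to the group quotient $\PMod(\Sigma_g^n)/\Mod(\Sigma_g^n)[p]$ appearing in the statement.

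For odd $p \geq 5$ the argument is a one-line diagram chase. As recalled just before Theorem \ref{corointro1}, $\rho_{g,p}$ factors literally through $\PMod(\Sigma_g^n)/\Mod(\Sigma_g^n)[p]$, so the composition
\[
\pi_1(C) \xrightarrow{f_*} \PMod(\Sigma_g^n) \twoheadrightarrow \PMod(\Sigma_g^n)/\Mod(\Sigma_g^n)[p] \xrightarrow{\overline{\rho}_{g,p}} GL(V)
\]
has infinite image by Theorem \ref{corointro1}, forcing the image of $f_*(\pi_1(C))$ in the intermediate quotient $\PMod(\Sigma_g^n)/\Mod(\Sigma_g^n)[p]$ to be infinite \emph{a fortiori}.

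For even $p \geq 10$ with $(g,n,p) \neq (2,0,12)$ the representation $\rho_{g,p}$ only factors through the strictly larger quotient $\PMod(\Sigma_g^n)/\Mod(\Sigma_g^n)[16p,2p]$, and Theorem \ref{corointro1} accordingly yields only that the image of $f_*(\pi_1(C))$ in this larger quotient is infinite. I would then descend along the natural surjection
\[
\pi \colon \PMod(\Sigma_g^n)/\Mod(\Sigma_g^n)[16p,2p] \twoheadrightarrow \PMod(\Sigma_g^n)/\Mod(\Sigma_g^n)[p]
\]
whose kernel is $K := \Mod(\Sigma_g^n)[p]/\Mod(\Sigma_g^n)[16p,2p]$. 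If one can show that $K$ is finite, then the infinite image upstairs projects to an infinite image downstairs (it differs only by a quotient by a finite subgroup), which closes the argument.

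The main obstacle is therefore the finiteness of $K$, and I would attack it via the stacky picture of \cite{mcgorb}: the surjection $\pi$ is induced by a finite morphism of smooth proper Deligne--Mumford stacks $\overline{\mathcal{M}_g^n}[16p,2p] \to \overline{\mathcal{M}_g^n}[p]$ obtained by taking further $16$-fold (resp.\ $2$-fold) cyclic roots along the non-separating (resp.\ separating) boundary divisors of $\overline{\mathcal{M}_g^n}[p]$. Although root-stack refinement does not produce finite kernels on orbifold fundamental groups in general (as the triangle-group picture in very low genus already shows), the rigidity of the higher-genus moduli combined with the finiteness of the number of boundary strata and their specific local structure should make $K$ finite for $g \geq 2$. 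A local analysis at each stratum of $\overline{\mathcal{M}_g^n}[p]$, using the explicit construction of \cite{mcgorb}, is the only step requiring genuine geometric input beyond Theorem \ref{corointro1}.
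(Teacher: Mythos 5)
Your odd-$p$ argument is exactly the paper's (implicit) proof: the corollary is stated as an immediate consequence of Theorem \ref{corointro1}, using that $\Mod(\Sigma_g^n)[p]\subseteq\ker\rho_{g,p}$ so that $\rho_{g,p}$ descends to $\PMod(\Sigma_g^n)/\Mod(\Sigma_g^n)[p]$, and infiniteness of $\rho_{g,p}(f_*\pi_1(C))$ then forces infiniteness of the image in that quotient. That part is fine, and you are also right to flag that for even $p$ the paper only records the factorization through $\PMod(\Sigma_g^n)/\Mod(\Sigma_g^n)[16p,2p]$, so the even case is not a literal one-line specialization.

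However, your proposed repair for even $p$ has a genuine gap, and the route cannot work. Everything rests on the finiteness of $K=\Mod(\Sigma_g^n)[p]/\Mod(\Sigma_g^n)[16p,2p]$, which you do not prove and which is in fact false in general. For $p$ sufficiently divisible, Dahmani's theorem (quoted in the paper in the proof of Theorem \ref{H2}) presents $\Mod(\Sigma_g^n)[p]$ on generators $x_\gamma$ (the $p$-th twist powers, indexed by curve classes) with only commutation relations for disjoint curves; since $\Mod(\Sigma_g^n)[16p,2p]$ is precisely the normal closure inside $\Mod(\Sigma_g^n)[p]$ of the elements $x_\gamma^{16}$ (non-separating type) and $x_\delta^{2}$ (separating types), the quotient $K$ is a graph product of finite cyclic groups over an infinite graph. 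Killing all but two non-adjacent vertex generators exhibits a surjection of $K$ onto $\Z/16\ast\Z/16$, so $K$ is infinite. (It is in any case nontrivial for every even $p\ge 10$: for a non-separating twist $T$ the eigenvalue ratios of $\rho_{g,p}(T)$ have order $2p$ or $2p/3$, so $T^p$ survives in $\PMod/\Mod[16p,2p]$.) Hence "infinite image upstairs" cannot be transported down the surjection $\pi$ by a finite-kernel argument, and no local root-stack analysis along the boundary will change this, since the obstruction is group-theoretic and already visible on twist subgroups. Note also that the cheap alternative reduction — using an odd divisor $q\ge 5$ of $p$ and the surjection $\PMod/\Mod[p]\to\PMod/\Mod[q]$ together with the odd case — does not cover all even $p$ (e.g.\ $p=16$ or $p=24$), so the even case genuinely requires an input at level $p$ itself (a representation, or an application of Proposition \ref{theointro2}, that actually kills the $p$-th powers of all Dehn twists), which your proposal does not supply.
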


\subsection{The Stein property}
Let ${\mathcal{T}_{g}^n}[\mathbf k]$ be the universal covering stack \cite{No2} of the analytification of $\overline{\mathcal{M}_{g}^n}[\mathbf k]$.  As an application of 
the previous results we obtain the following:

\begin{theoi}\label{theointro1} Let $g,n$ with $g\geq 2$.

\begin{enumerate}
   \item 
If $p\geq 7$ is odd, then ${\mathcal{T}_{g}^{n}}[p]$  is a Stein manifold.
\item 

If $p$ is even and $p\geq 14, p\not\in\{20,24\}$  then ${\mathcal{T}_{g}^{n}}[2p,  p/{\rm g.c.d.}(p,4)]$  is a Stein manifold.
 
\end{enumerate}

\end{theoi}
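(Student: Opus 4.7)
The plan is to derive the Stein property from the Shafarevich morphism machinery of \cite{E6}, applied to the unitary (hence reductive and semi-simple) Reshetikhin-Turaev representation $\rho_{g,p}$. Writing $\mathbf{k}=(p,p,\dots,p)$ for $p$ odd and $\mathbf{k}=\bigl(2p,\ p/\gcd(p,4),\dots,p/\gcd(p,4)\bigr)$ for $p$ even, the discussion preceding Theorem \ref{corointro1} shows that $\rho_{g,p}$ factors through a representation of $\pi_1(\overline{\mathcal{M}_g^n}[\mathbf{k}])$. The main theorem of \cite{E6} then produces a Shafarevich morphism
\[
\mathrm{sh}_{\rho_{g,p}}:\overline{\mathcal{M}_g^n}[\mathbf{k}]\longrightarrow \mathrm{Sh}_{\rho_{g,p}},
\]
a proper surjection with connected fibers onto a normal projective variety, contracting exactly the compact analytic subvarieties $Z$ for which the image of $\pi_1(Z)$ in $\pi_1(\overline{\mathcal{M}_g^n}[\mathbf{k}])$ has finite image under $\rho_{g,p}$. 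Once I check that $\mathrm{sh}_{\rho_{g,p}}$ has zero-dimensional fibers, the $\rho_{g,p}$-cover of $\overline{\mathcal{M}_g^n}[\mathbf{k}]$ will be holomorphically convex (by \cite{E6}) and will contain no positive-dimensional compact analytic subvariety, so it will be Stein; since $\mathcal{T}_g^n[\mathbf{k}]$ is an unramified covering of that Stein manifold, it too will be Stein.

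It thus suffices to verify that no positive-dimensional compact analytic subvariety $Z\subset\overline{\mathcal{M}_g^n}[\mathbf{k}]$ is contracted by $\mathrm{sh}_{\rho_{g,p}}$. If $Z$ meets the open locus $\mathcal{M}_g^n$, a generic complete-intersection curve $C\hookrightarrow Z$ meets $\mathcal{M}_g^n$ in a dense open set and defines a non-isotrivial algebraic family of $n$-pointed smooth genus $g$ curves; by a Lefschetz-type argument the image of $\pi_1(C)$ in $\pi_1(\overline{\mathcal{M}_g^n}[\mathbf{k}])$ still surjects onto that of $\pi_1(Z)$, and Theorem \ref{corointro1} forces $\rho_{g,p}(f_*\pi_1(C))$ to be infinite, contradicting the assumption that $Z$ is contracted.

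The main obstacle is the remaining case in which $Z$ lies entirely in the boundary divisor. For this I would induct on the dual-graph stratification: each irreducible boundary component of $\overline{\mathcal{M}_g^n}[\mathbf{k}]$ is, up to finite \'etale quotients, the image of a clutching map from a product of smaller moduli stacks $\overline{\mathcal{M}_{g',n'}}[\mathbf{k}']$, and the pullback of $\rho_{g,p}$ along such a clutching map decomposes, via the Verlinde/TQFT gluing formula recalled in \cite{mcgorb}, into a direct sum of tensor products of Reshetikhin-Turaev representations at level $p$ on each factor with colored markings assigned to the glued boundary points. The inductive step therefore reduces the problem to positive-dimensional compact subvarieties $Z'$ of smaller compactified moduli stacks, where one can reapply the interior argument and Theorem \ref{corointro1} to the non-trivial factors. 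The numerical hypotheses on $p$ are tailored precisely so that every stratum arising in this induction falls within the range of validity of Theorem \ref{corointro1}: the exclusion $p=5$ in the odd case, and $p\in\{10,12,20,24\}$ in the even case, correspond to those values for which a genus-$2$ boundary stratum of $\overline{\mathcal{M}_g^n}[\mathbf{k}]$ would otherwise force an appeal to the excluded case $(g,n,p)=(2,0,12)$ or violate the even-level threshold $p\ge 10$ required for $\rho_{g,p}$ to descend to $\overline{\mathcal{M}_g^n}[\mathbf{k}]$.
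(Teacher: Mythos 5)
Your overall strategy coincides with the paper's: produce the Shafarevich morphism for $\rho_{g,p}$, use the Gibney--Keel--Morrison fibration theorem (via Proposition \ref{gibney}) to rule out contracted subvarieties meeting $\mathcal{M}_g^n$, and handle the boundary by an induction over the Deligne--Mumford strata using the TQFT gluing rules (Proposition \ref{fus}, Corollary \ref{contcurvstrat}). However, there is a genuine gap at the bottom of your induction, and your explanation of the numerical hypotheses is not correct. The stratification terminates in one-dimensional strata whose moving component is a $4$-punctured rational curve or a $1$-punctured elliptic curve (families of elliptic tails), so after cutting along the multicurve the ``non-trivial factor'' is a quantum representation of $\Mod(\Sigma_0^4)$ or $\Mod(\Sigma_1^1)$. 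Theorem \ref{corointro1} assumes $g\geq 2$ and therefore cannot be ``reapplied'' to these factors; what is needed is a separate infiniteness statement for the representations of $\Mod(\Sigma_1^1)/\Mod(\Sigma_1^1)[p]$ and $\Mod(\Sigma_0^4)/\Mod(\Sigma_0^4)[p]$, which is exactly Proposition \ref{infiniteness} in the paper (proved via the Burau representation at roots of unity for odd $p$ and via Coxeter's criterion with an indefinite invariant Hermitian form for $p\equiv 0 \bmod 4$). This is also the true source of the extra exclusions in Theorem \ref{theointro1}: $\rho_{1,5,(i)}(\Mod(\Sigma_1^1))$ is finite, as are the relevant genus-one representations for even $p\in\{10,12,20,24\}$, which is why $p=5$ and even $p\in\{10,12,20,24\}$ must be discarded --- not, as you assert, because a genus-$2$ stratum would invoke the excluded case $(g,n,p)=(2,0,12)$ or the even threshold $p\geq 10$ of Theorem \ref{corointro1} (that explanation accounts neither for $p=5$ nor for $p=20,24$).

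A secondary, fixable inaccuracy: the holomorphic convexity and Stein results of \cite{E6} are established for complex projective manifolds and for absolutely constructible sets of representations, not for a single representation on a proper Deligne--Mumford stack. The paper therefore passes to a finite Galois uniformization $X_p$ of $\overline{M_g^n}$ and to the absolutely constructible set $R_p$ generated by the Galois conjugates of the pulled-back $\rho_{g,p,(\mathbf{i})}$ (Lemma \ref{critstein}); triviality of $\mathrm{sh}_{\rho_{g,p}}$ then forces $\mathrm{sh}_{R_p}=\mathrm{id}_{X_p}$ via the factorization through $\mathrm{sh}_{R_p}$, the corresponding cover of $X_p$ is Stein, and the universal cover $\mathcal{T}_g^n[\mathbf{k}]$ is Stein as an unramified cover of a Stein manifold. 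You should either incorporate this reduction or justify directly that the cover attached to $\ker\rho_{g,p}$ itself is holomorphically convex; but even granting that, your argument does not close without the genus $\leq 1$ base case supplied by Proposition \ref{infiniteness}.
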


The theorem is proved in section \ref{pftheointro1} and improves on Theorem \ref{corointro1} above and the key Proposition  \ref{theointro2}.

\subsection{Teichm\"uller curves and Veech groups}
The theorem above applies to the Teichm\"uller curve attached to a Veech surface, as explained in section \ref{appteichcurves}. We then obtain:

\begin{coroi}\label{origami}
Let $\Sigma_g\to \Sigma_1$, $g\geq 2$, be a ramified covering which is branched over a single point of the torus $\Sigma_1$. Let $G\subset \Mod(\Sigma_g)$ be the subgroup of mapping classes 
of homeomorphisms of $\Sigma_g$ which lift homeomorphisms of $\Sigma_1$. 
Then the image of $\rho_{g,p}(G)$ is infinite for odd $p\geq 7$ 
and even  $p\geq 14, p\not\in\{20,24\}$.   
\end{coroi}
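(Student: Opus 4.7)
The plan is to realise $G$ as the fundamental-group image of an algebraic, non-isotrivial family $f\colon C \to \Mg$ attached to the origami $\Sigma_g \to \Sigma_1$, and then apply Theorem \ref{corointro1}. The range of $p$ in the statement of the corollary is strictly inside the range allowed by Theorem \ref{corointro1}, so no stronger input is required.

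First I would set up the Teichm\"uller disc of the origami. The ramified cover $\Sigma_g \to \Sigma_1$, branched at a single point, pulls back a non-zero holomorphic $1$-form on the elliptic curve $\Sigma_1$ to a translation structure $\omega$ on $\Sigma_g$, making $(\Sigma_g,\omega)$ a square-tiled surface. The $SL(2,\R)$-orbit of this point in the Hodge bundle projects to a Teichm\"uller disc $\Delta \cong \H$ in Teichm\"uller space. Classical results on origamis (Gutkin--Judge, Schmith\"usen) imply that the stabiliser $\Gamma$ of $\Delta$ in $\Mod(\Sigma_g)$ is a lattice in $SL(2,\R)$ commensurable with $SL(2,\Z)$, and work of Veech and Smillie shows that $C := \Delta/\Gamma$ is naturally an algebraic curve and that the tautological map $f\colon C \to \Mg$ is algebraic: this is the Teichm\"uller curve of the origami.

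Next I would identify $G$ with $f_*(\pi_1(C))$ up to finite index. A mapping class of $\Sigma_g$ lifting a mapping class of $\Sigma_1$ preserves the pulled-back flat structure $\omega$ up to isotopy, hence is affine, and conversely any affine symmetry of $(\Sigma_g,\omega)$ tautologically descends to an affine symmetry of the base torus. Therefore $G$ coincides, up to the finite subgroup of translation automorphisms, with the affine group of the origami, which is precisely $f_*(\pi_1(C))$. Since $\Gamma$ contains hyperbolic elements of $SL(2,\R)$, their lifts in $G$ are pseudo-Anosov mapping classes of infinite order, so the family $f$ is non-isotrivial. Theorem \ref{corointro1}, applied to $f$ with $n=0$ and every $p$ in the range prescribed by the corollary, then yields that $\rho_{g,p}(f_*(\pi_1(C)))$ is infinite; \emph{a fortiori} so is $\rho_{g,p}(G)$.

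The delicate point is the identification of the topologically-defined group $G$ with the geometric affine group of the origami, together with checking the algebraicity statement for $f\colon C \to \Mg$ at the level of the Deligne--Mumford stack $\Mg$ rather than just of its coarse space. Both are part of standard Teichm\"uller curve theory, but making them precise is the only non-routine ingredient in the argument.
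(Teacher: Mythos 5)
Your proposal is correct and follows essentially the same route as the paper: realize $G$ (up to finite data) as the image of the fundamental group of the algebraic Teichm\"uller curve of the origami and apply Theorem \ref{corointro1}. The paper merely packages this as Proposition \ref{Veech} for general Veech surfaces and cites Schmith\"usen's description of origami Veech groups for the identification of $G$ with ${\rm Aff}^+(X,q)$, which you instead argue directly.
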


We now formulate a corollary in terms of groups generated 
by two multicurves to illustrate this point.  Recall that a multicurve $\underline{c}$ on the surface $\Sigma_g^n$ is  
the union of disjoint, essential simple closed curves. 
We allow a multicurve to contain several parallel copies of the same curve. 
If  $\underline{c}$ and $\underline{d}$ are two multicurves, we can isotope them in minimal position, namely such that $|\underline{c}\cap \underline{d}|=i(\underline{c},\underline{d})$, where $i(\underline{c},\underline{d})$ denotes the minimal number of intersection points between the two isotopy classes of multicurves. 
The configuration graph $\mathcal G(\underline{c}\cup \underline{d})$  has vertices associated to connected components of $\underline{c}$ and of $\underline{d}$ and edges associated to intersection points between the components. Leininger considered in \cite{Lein} the 
classical Dynkin graphs $A_n, D_n, E_6, E_7$ and $E_8$, which he  called {\em recessive} and the {\em critical} graphs  $P_{2n}, Q_{n}, R_7, R_8$ and $R_9$.


\begin{coroi}\label{corointrod2}
Let $G(\underline{c},\underline{d})$ be the subgroup of $\PMod(\Sigma_g^n)$ 
generated by the Dehn multitwists $T_{\underline{c}}$ and 
$T_{\underline{d}}$ with connected $\underline{c}\cup\underline{d}$ and 
$\underline{c}$ and $\underline{d}$ in minimal position. 
Assume that the configuration graph $\mathcal G(\underline{c}\cup \underline{d})$ is either critical or recessive. 
Then the image of $\rho_{g,p}(G(\underline{c},\underline{d}))$ is infinite, 
for odd $p\geq 7$ 
and even  $p\geq 14, p\not\in\{20,24\}$. 
\end{coroi}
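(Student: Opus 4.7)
The plan is to realize $G(\underline{c},\underline{d})$, up to finite index, as the lifting group of an origami, thereby reducing Corollary \ref{corointrod2} to Corollary \ref{origami}.

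By Thurston's construction, the connected configuration $\underline c \cup \underline d$ of multicurves in minimal position endows the subsurface it fills with a singular flat structure in which $T_{\underline c}$ and $T_{\underline d}$ act as orientation-preserving affine homeomorphisms whose derivatives are parabolic elements of $SL_2(\mathbb{R})$. After puncturing at the cone points we may regard the underlying surface as $\Sigma_g^m$ for some $m\geq n$, and obtain a homomorphism from $G(\underline c,\underline d)$ into the Veech group of the resulting flat surface. Leininger's main theorem in \cite{Lein} asserts that precisely when $\mathcal G(\underline c \cup \underline d)$ is critical or recessive, this Veech group is a lattice in $SL_2(\mathbb{R})$ and, possibly after passing to an orienting double cover, the flat surface is arithmetic, i.e.\ an origami: it admits a translation covering $\Sigma_g^m \to \Sigma_1$ branched over a single point of the torus. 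Consequently the image of $G(\underline c,\underline d)$ in $\Mod(\Sigma_g^m)$ is contained with finite index in the lifting group of this origami in the sense of Corollary \ref{origami}, and the associated Teichm\"uller disk descends to a non-isotrivial algebraic map $f\colon C \to \mathcal{M}_g^m$.

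Corollary \ref{origami} (or, equivalently, Theorem \ref{corointro1} applied to $f$) then gives that $\rho_{g,p}$ has infinite image on the lifting group for the specified values of $p$, and hence also on $G(\underline c,\underline d)$ itself. The main obstacle will be the bookkeeping between the number $n$ of punctures of the original surface and the number $m$ of additional punctures that must be introduced at the cone singularities to make the Thurston flat structure translation-invariant: transferring the infiniteness under $\rho_{g,p}$ from $\Mod(\Sigma_g^m)$ down to its image in $\Mod(\Sigma_g^n)$ relies on the fact that $\rho_{g,p}$ remains infinite on the geometric surface subgroups corresponding to the forgotten punctures, precisely the hypothesis invoked in Theorem \ref{unitary}.
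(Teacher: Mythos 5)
Your key reduction is not valid: Leininger's theorem does not say that critical or recessive configuration graphs produce arithmetic flat surfaces. What \cite{Lein} (Theorem 7.1, quoted as Theorem \ref{leininger} in the paper) gives is that $G(\underline{c},\underline{d})$ has finite index in the Veech group $SL(X,q)$ of the Thurston flat surface precisely when $\mathcal G(\underline{c}\cup\underline{d})$ is critical or recessive; in these cases the Veech group is a lattice, so $(X,q)$ is a Veech surface, but nothing more. In the critical case ($\mu=2$) the Veech group is indeed commensurable with $SL(2,\Z)$ and one could invoke Gutkin--Judge \cite{GJ} to get a square-tiled surface, but in the recessive (ADE) cases one has $\mu=2\cos(\pi/m)<2$ and the Veech group is a Hecke-type triangle group, which for all but finitely many $m$ is non-arithmetic; for instance the surfaces arising from $A_n$ or $E_8$ configurations are not origamis, even after an orientation double cover. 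So the proposed reduction of Corollary \ref{corointrod2} to Corollary \ref{origami} collapses exactly in half of the cases covered by the statement. The argument that works (and is the one the paper uses) is to bypass arithmeticity altogether: for any Veech surface the Teichm\"uller curve $\Phi_{(X,q)}:\mathbb{H}/PSL(X,q)\to \mathcal{M}_{g'}^m$ is a non-isotrivial algebraic family, so Theorem \ref{corointro1} (packaged as Proposition \ref{Veech}) applies directly and gives infiniteness of $\rho_{g',p}$ on ${\rm Aff}^+(X,q)$, hence on the finite-index subgroup $G(\underline{c},\underline{d})$; both Corollaries \ref{origami} and \ref{corointrod2} are parallel consequences of this, rather than one following from the other.

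A second, smaller issue is your treatment of the passage from the filled subsurface back to $\Sigma_g^n$. The multicurves fill a subsurface $S'$ (not necessarily all of $\Sigma_g^n$), and the punctures of the flat surface are the zeros of $q$; the transfer of infiniteness from $\rho_{g',p}$ on $\PMod(S')$ to the restriction of $\rho_{g,p}$ on its image in $\PMod(\Sigma_g^n)$ is done via the fusion rules (Proposition \ref{fus}), which show that the pullback of $\rho_{g,p}$ to the cut surface contains the quantum representations of $S'$ with all boundary colorings as summands. Invoking the ``infinite on geometric surface subgroups'' hypothesis of Theorem \ref{unitary} does not address this step: that hypothesis concerns the point-pushing subgroups $K_i$ coming from forgetting marked points, not the restriction of $\rho_{g,p}$ to a subsurface mapping class group, and by itself it gives no control over the decomposition of $\rho_{g,p}$ along the boundary of $S'$.
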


\subsection{Degree two cohomology classes}
One of the main open problems in the theory of K\"ahler groups is a conjecture due to Toledo that the fundamental group $\Gamma$ of a compact K\"ahler manifold should satisfy $H^2(\Gamma, \mathbb{R}) \not =0$. This is obviously true for  aspherical compact K\"ahler manifolds. Reznikov observed this also holds if there is a complex Variation of Hodge structure $\mathbb V$ such that $H^1(\Gamma, {\mathbb V}) \not =0$.  In particular, thanks to Simpson's ubiquity theorem, (see \cite{Sim1}, \cite[Corollary 4.2, p.45]{Sim}) the conjecture holds true unless all finite rank complex  linear representations of $\Gamma$ are infinitesimally rigid. This very strong rigidity property follows from Margulis' superrigidity for $\Gamma$ a torsion free irreducible uniform lattice in an almost simple Lie group of Hermitian symmetric type of real rank at least 2; still the Toledo conjecture holds true in the latter case since the universal covering space is a bounded symmetric domain, hence is contractible. One may speculate that the K\"ahler groups we are studying here have the above rigidity property, but in any case they are non-trivial instances of the Toledo conjecture. 

We use a recent theorem due to  Dahmani (\cite{Da}) to give some evidence towards the Toledo conjecture for the stacks  $\overline{\mathcal{M}_{g}^n}[\mathbf k]$.
\begin{theoi}\label{H2}
 There exists some $p_0$ such that for $g\ge 2$ and $g.c.d (k_0, \ldots, k_{N_{g,n}})$  divisible by $p_0$ we have 
\[ H^2 (\Mod(\Sigma_g^n)/\Mod(\Sigma_g^n)[\mathbf k], \R) \not =0. \]
 \end{theoi}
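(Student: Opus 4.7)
The plan is to use the five-term exact sequence in real group cohomology attached to
$$1 \to F \to \Mod(\Sigma_g^n) \to \Gamma_{\mathbf{k}} \to 1, \qquad F := \Mod(\Sigma_g^n)[\mathbf{k}], \quad \Gamma_{\mathbf{k}} := \Mod(\Sigma_g^n)/F,$$
in order to reduce the conclusion $H^2(\Gamma_{\mathbf{k}},\R)\neq 0$ to the existence of a single non-zero $\Mod(\Sigma_g^n)$-conjugation-invariant real character of $F$.

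The five-term sequence reads
$$0 \to H^1(\Gamma_{\mathbf{k}}, \R) \to H^1(\Mod(\Sigma_g^n), \R) \to H^1(F, \R)^{\Gamma_{\mathbf{k}}} \to H^2(\Gamma_{\mathbf{k}}, \R) \to H^2(\Mod(\Sigma_g^n), \R).$$
For $g\geq 2$ the abelianisation of $\Mod(\Sigma_g^n)$ is finite (standard: trivial for $g\geq 3$ by Powell, of order dividing $10$ for $g=2$, and the marked-point case is handled via the Birman exact sequence), so $H^1(\Mod(\Sigma_g^n),\R)=0$. Hence the sequence yields an inclusion
$$\mathrm{Hom}_{\Gamma_{\mathbf{k}}}(F^{ab},\R) \;=\; H^1(F,\R)^{\Gamma_{\mathbf{k}}} \;\hookrightarrow\; H^2(\Gamma_{\mathbf{k}},\R),$$
so it suffices to exhibit a non-zero element of $\mathrm{Hom}_{\Gamma_{\mathbf{k}}}(F^{ab},\R)$, i.e.\ a non-zero real character of $F$ invariant under the outer conjugation action of $\Gamma_{\mathbf{k}}$.

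This is where Dahmani's theorem \cite{Da} enters in an essential way. For $\mathbf{k}$ with all entries divisible enough, that result endows $F$ with the structure of a free group freely generated by an explicit $\Mod(\Sigma_g^n)$-invariant collection of conjugates of the Dehn-twist powers $T_c^{k_j}$, the $\Mod(\Sigma_g^n)$-orbits on this basis being indexed by the $N_{g,n}$ topological types of essential simple closed curves on $\Sigma_g^n$. Fixing one such orbit (e.g.\ the non-separating one) and sending every generator lying in it to $1\in\R$ while sending every generator in the other orbits to $0$ defines, by the universal property of the free group, a homomorphism $\phi:F\to\R$ that is manifestly $\Mod(\Sigma_g^n)$-conjugation invariant, and non-trivial because $\phi(T_c^{k_0})=1$. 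Its class in $H^1(F,\R)^{\Gamma_{\mathbf{k}}}$ then lifts through the five-term sequence to a non-zero element of $H^2(\Gamma_{\mathbf{k}},\R)$.

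The main obstacle is to invoke Dahmani's theorem in exactly the right form: one needs freeness of $F$ \emph{together with} sufficient control of the $\Mod(\Sigma_g^n)$-permutation action on a canonical free basis, so that the naively-defined character $\phi$ extends consistently to the whole of $F^{ab}$ and is non-trivial. The ``divisible enough'' hypothesis on $\gcd(k_0,\ldots,k_{N_{g,n}})$ is precisely what makes these structural conclusions available simultaneously for every topological type of Dehn twist. Once this input is in place, the remainder—the five-term sequence and the vanishing $H^1(\Mod(\Sigma_g^n),\R)=0$—is formal, and the conclusion is insensitive to whether one works with $\Mod$ or with the finite-index subgroup $\PMod$, since finite quotients contribute trivially to real cohomology.
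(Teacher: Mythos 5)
Your route is the same as the paper's: the five-term exact sequence for $1\to \Mod(\Sigma_g^n)[\mathbf k]\to \Mod(\Sigma_g^n)\to \Gamma_{\mathbf k}\to 1$ together with $H^1(\Mod(\Sigma_g^n),\R)=0$ gives an injection of $\mathrm{Hom}(\Mod(\Sigma_g^n)[\mathbf k],\R)^{\Mod(\Sigma_g^n)}$ into $H^2(\Gamma_{\mathbf k},\R)$, and Dahmani's theorem is then used to produce a non-zero invariant character; this is exactly the paper's Proposition on the Hochschild--Serre sequence followed by its corollary of Dahmani's presentation theorem.

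One point in your write-up is, however, stated incorrectly and should be repaired. Dahmani's theorem does not make $F=\Mod(\Sigma_g^n)[\mathbf k]$ a free group, and it cannot be free: powers of Dehn twists along two disjoint, non-isotopic curves lie in $F$ and generate a copy of $\Z^2$, which no free group contains. What Dahmani proves (and what the paper uses) is an infinite presentation of $F$ with generators $x_\gamma$ indexed by isotopy classes of essential simple closed curves modulo $F$, and with relations consisting only of the commutators $[x_\gamma,x_{\gamma'}]=1$ when $i(\gamma,\gamma')=0$. Your construction of the character survives this correction, since only the abelianisation matters: the relations are commutators, so $F^{ab}$ is free abelian on the $x_\gamma$, the conjugation action of $\Mod(\Sigma_g^n)$ permutes these basis classes according to the $N_{g,n}$ topological types, and assigning the value $1$ to one orbit and $0$ to the others is a well-defined, non-zero, invariant homomorphism $F\to\R$ (equivalently, the invariant subspace is $\R^{N_{g,n}}$, as in the paper). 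So replace the appeal to ``the universal property of the free group'' by the universal property of Dahmani's presentation; with that change your argument coincides with the paper's proof.
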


\begin{rem}
The results of \cite{mcgorb} and of the present note extend to all TQFTs attached to a Rational Conformal Field Theory, or modular functor as in \cite{Go3}, assuming 
their restrictions to geometric surface subgroups are infinite. 
\end{rem}

\vskip 0.3cm {\bf Acknowledgements.}
We would like to thank Gavril Farkas for showing us the reference  \cite{GKM},  
Marco Boggi, Beno\^it Claudon,  Fran\c{c}ois Dahmani, Thomas Delzant, Pierre Godfard, Henri Gu\'enancia, Erwan Lanneau, Carl Lian,  Julien March\'e, Vasily Rogov and Yibo Zhang for interesting discussions related to some topics discussed in this note and the referee for useful comments improving the presentation. 

We dedicate this article to the memory of 
Domingo Toledo, whose work has inspired us for many years.

\section{Proof of Theorem \ref{unitary}}\label{pftheointro2}
The key ingredient is the fact that the Shafarevich morphisms for the uniformizable stacky compactifications of moduli spaces of curves endowed with a semi-simple representation 
of the fundamental group is a birational morphism (see Proposition  
\ref{theointro2}). This is a consequence of deep results of Gibney, Keel and Morrison 
describing all fibrations of the Deligne-Mumford compactifications of moduli space of curves
(see Theorem \ref{GKMtheorem}). Therefore algebraic curves in the moduli space cannot be contracted by the corresponding unitary representation of the mapping class group.  
  
\subsection{Stacky compactifications of moduli space of smooth curves}
For every ${\mathbf{k}} \in \mathbb{N}_{>0}^{N_{g,n}}$ the quotient $\PMod(\Sigma_{g}^n)\slash \Mod(\Sigma_{g}^n)[\mathbf k]$  
is the fundamental group of a smooth proper Deligne-Mumford stack ${\overline{\mathcal{M}_{g}^{n \ an}}}[\mathbf k]$ compactifying $\mathcal{M}_{g}^{n \ an}$
 whose coarse moduli space is the moduli space  
${\overline{{M}_{g}^{n \ an}}}$ of stable $n$-punctured curves of genus $g$, hence is projective, see \cite{mcgorb}.

 Recall that a smooth Deligne-Mumford stack $\mathcal{X}$  is {\em uniformizable} if $\mathcal{X}^{an} \cong [X/G]$ where the finite group $G$ acts on $X$ a smooth complex manifold (the action need not be effective).
  If $\mathcal{X}$  is uniformizable then its analytification is equivalent to a quotient stack of a properly discontinuous 
  action of its fundamental group on a simply connected complex manifold  $\mathcal{X}^{an} \cong \widetilde{\mathcal X^{univ}} /\pi_1(\mathcal{X})$, where  $\widetilde{\mathcal X^{univ}}$ is the universal covering space of $\mathcal{X}^{an}$.
  
  The following facts were proved in \cite{mcgorb}:
  \begin{prop}\label{eyfu}
  \begin{enumerate}
   \item 
If $p\geq 5$ is odd, ${\overline{\mathcal{M}_{g}^{n \ an}}}[p]$  is uniformizable.
\item 

If $p\ge 12$ is even and $(g,n,p)\neq (2,0,12)$, $\overline{{\mathcal M}_g^{n \; an}}[2p,  p/{\rm g.c.d.}(p,4)]$ is uniformizable.
  \end{enumerate}
\end{prop}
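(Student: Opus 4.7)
Both statements are established in \cite{mcgorb}; here I only outline the strategy. The goal is to exhibit a finite Galois étale cover of $\overline{\mathcal{M}_g^n}[\mathbf k]$ by a smooth quasi-projective scheme, whose Galois group is the quotient $\PMod(\Sigma_g^n)/\Mod(\Sigma_g^n)[\mathbf k]$. The natural candidate cover is the normalization (over $\overline{\mathcal{M}_g^n}$) of the moduli space of stable curves equipped with an appropriate level structure: a symplectic (abelian) level-$p$ structure on $H^1(-;\Z/p)$ in case (1), and a spin/theta-characteristic level of the appropriate parity in case (2).

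First I would verify that, for the indicated values of $p$, this level cover is representable by a smooth manifold rather than merely by a stack. Over the open part $\mathcal{M}_g^n$ this is Serre's lemma in the form: no nontrivial automorphism of a smooth stable curve acts trivially on $H^1(-;\Z/p)$ for $p\ge 3$. Extending the statement across the boundary of $\overline{\mathcal{M}_g^n}$ requires handling automorphisms of singular stable curves, for which I would rely on the work of Looijenga and Pikaart--de Jong; the spin-level case (2) is more delicate because the involutions in low genus act by $\pm 1$ on homology, which is why the integer levels need to be doubled and why one must impose $p\ge 12$.

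Second, I would identify the Galois group of this cover via a local monodromy computation. In formal coordinates near a boundary stratum of $\overline{\mathcal{M}_g^n}$ the cover is a product of cyclic branched covers $z\mapsto z^{m}$, where $m$ is the order with which the Dehn twist around the corresponding vanishing cycle acts on the chosen level structure. Running through the classification of topological types of vanishing cycles (non-separating versus separating with each pair of complementary genera) and computing these orders yields $\mathbf k=(p,\ldots,p)$ in case (1), and $\mathbf k=(2p,\,p/\gcd(p,4),\ldots,p/\gcd(p,4))$ in case (2). The main obstacle is the exceptional triple $(g,n,p)=(2,0,12)$ appearing in (2): the hyperelliptic involution at genus $2$ is central in $\Mod(\Sigma_2)$ and for this particular $p$ its action on the spin level numerically coincides with an element already in the would-be kernel, so the local monodromy computation breaks down and the cover fails to be representable. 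Excluding this triple, the construction goes through and produces the required uniformization.
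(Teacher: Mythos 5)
This proposition is not proved in the present paper at all: it is quoted verbatim from \cite{mcgorb}, so the only fair comparison is with the construction carried out there. Your sketch defers to \cite{mcgorb} but then outlines a different strategy, and that strategy has a genuine gap. Uniformizability of $\overline{\mathcal{M}_{g}^{n}}[\mathbf k]$ amounts to finding a \emph{finite} quotient of $\PMod(\Sigma_g^n)/\Mod(\Sigma_g^n)[\mathbf k]$ that is injective on the local isotropy groups of the boundary, which near a stratum given by a multicurve are products of cyclic groups of orders $k_j$ generated by the images of the Dehn twists along the pinched curves. Your candidate cover for case (1), the symplectic level-$p$ structure on $H^1(-;\Z/p)$, cannot do this: Dehn twists along \emph{separating} curves act trivially on homology, so the quotient $\mathrm{Sp}(2g,\Z/p)$ kills every separating twist and the associated level cover has local monodromy of order $p$ only along the non-separating boundary divisor. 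The local monodromy computation you propose would therefore output $\mathbf k=(p,1,\ldots,1)$, not $(p,\ldots,p)$; Serre's lemma and the Looijenga/Pikaart--de Jong smoothness results do not repair this, since the issue is not representability of the level cover but the fact that it does not dominate the orbifold $\overline{\mathcal{M}_{g}^{n}}[p]$ at the separating strata. The same objection applies, more severely, to case (2): a spin/theta-characteristic level only carries $2$-torsion information and cannot produce local orders $2p$ on non-separating twists and $p/\mathrm{g.c.d.}(p,4)$ on separating ones, and the exceptional triple $(2,0,12)$ has nothing to do with the hyperelliptic involution acting on a spin level.

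What \cite{mcgorb} actually does is build the required finite quotients out of the Reshetikhin--Turaev representations themselves (combined with homological data): for these, the image of a Dehn twist has eigenvalues $(-1)^aA^{a(a+2)}$ with $A$ a primitive $2p$-th root of unity, hence order exactly $p$ for odd $p$ and $2p$ for even $p$ (with central/projective corrections responsible for the value $p/\mathrm{g.c.d.}(p,4)$ on separating curves), and one reduces these representations, defined over rings of cyclotomic integers, modulo suitable primes so that these orders survive in a finite group while the restriction to the abelian local subgroups stays injective. The numerical exceptions, including $(g,n,p)=(2,0,12)$, come out of that quantum-representation analysis. So while it is legitimate to cite \cite{mcgorb} for this statement, the route you describe is not the one used there, and as an independent argument it would fail at the separating Dehn twists.
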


A key ingredient in the sequel is the notion of Shafarevich morphism: 
\begin{definition} \label{defshafmorph} 
Let $\mathcal{X}$ be a connected smooth proper Deligne-Mumford stack and $\rho: \pi_1(\mathcal{X}) \to GL_N(\C)$ be a finite rank semi-simple complex linear representation, namely which has a Zariski dense image in a reductive subgroup of $GL_N(\C)$. A Shafarevich morphism associated to $(\mathcal X, \rho)$ 
is  a surjective map of stacks $s_{\rho}:\mathcal{X} \to \mathcal{S}_{\rho}$ 
with connected fibers, onto a uniformizable normal proper Deligne-Mumford stack $\mathcal{S}_{\rho}$, with the following property:
for any  map  $Z \buildrel{f}\over{\to} \mathcal{X} $ from  a   connected algebraic variety $Z$ the image 
$\rho(f_* \pi_1(Z))$  is finite if and only if $ s^{mod}_{\rho}\circ f (Z)=\{pt\}$,  where $s^{mod}_{\rho}: \mathcal{X}^{mod} \to \mathcal{S}^{mod}_{\rho}$ denotes the moduli map.
\end{definition}

The first author proved that Shafarevich morphisms exist for smooth complex projective varieties (see \cite[Théorèmes 2, 2.1.7] {E6}). We will need in the sequel the existence 
of Shafarevich morphisms more generally, for smooth proper uniformizable Deligne-Mumford stacks.  We postpone the discussion on stacks and the proof to the next section. Assuming this,  our main technical tool is the following result:

\begin{prop} \label{theointro2} Suppose that $\overline{\mathcal{M}_{g}^n}[\mathbf k]$ is uniformizable. 
Let $g\geq 2$ and $\rho$ be a  semi-simple finite rank complex linear representation of 
$\pi_1(\overline{\mathcal{M}_{g}^n}[\mathbf k])=\PMod(\Sigma_{g}^n)\slash \Mod(\Sigma_{g}^n)[\mathbf k]$  with  infinite image. 
Assume that $\rho$ restricts to infinite representations on all geometric surface subgroups, when $n\geq 1$.  
\begin{enumerate}
\item Then the Shafarevich morphism  $s^{mod}_{\rho}:\overline{M_{g}^n}\to {S}_{\rho}^{mod}$
is a birational contraction whose exceptional locus lies in the boundary. 
\item If $C$ is a curve, then for every  non-isotrivial algebraic family  $f:C \to \Mg^n$ of $n$-punctured smooth genus $g$ curves the image $\rho \circ f_*(\pi_1(C))$ is infinite.
\end{enumerate}
\end{prop}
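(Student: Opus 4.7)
The plan is to construct the Shafarevich morphism associated to $(\Mgnbark, \rho)$ and use the Gibney--Keel--Morrison classification (Theorem \ref{GKMtheorem}) to force it to be birational with exceptional locus contained in the boundary; part (2) will then follow directly from the defining property of the Shafarevich morphism.

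First, since $\Mgnbark$ is uniformizable and $\rho$ is semi-simple with infinite image, the general results of \cite{E6} provide a Shafarevich morphism $s_\rho \colon \Mgnbark \to \mathcal{S}_\rho$ onto a uniformizable normal proper Deligne--Mumford stack. Passing to coarse moduli spaces yields a surjective morphism $s^{mod}_\rho \colon \overline{M_g^n} \to S^{mod}_\rho$ with connected fibers, between normal projective varieties. The objective of part (1) is to show that this fibration is birational and contracts only boundary loci.

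Next I would apply Theorem \ref{GKMtheorem} to classify $s^{mod}_\rho$: every surjective morphism with connected fibers from $\overline{M_g^n}$ to a projective variety either factors through a forgetful morphism $\overline{M_g^n}\to \overline{M_g^{n'}}$ with $0\le n'<n$, or is a birational contraction whose exceptional locus is contained in the boundary. To rule out the forgetful case, observe that a generic fiber of the forgetful map $\pi_i \colon M_g^n \to M_g^{n-1}$ is an $(n-1)$-punctured smooth genus-$g$ Riemann surface, whose fundamental group maps into $\PMod(\Sigma_g^n)/\Mod(\Sigma_g^n)[\mathbf k]$ as the image of the geometric surface subgroup $K_i$ by Birman's exact sequence. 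If $s^{mod}_\rho$ factored through any $\pi_i$ (or any iteration), then $\rho|_{K_i}$ would have finite image, contradicting the hypothesis. The case $S^{mod}_\rho=\{\mathrm{pt}\}$ is excluded because $\rho$ has infinite image. This gives part (1).

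For part (2), I would take a non-isotrivial algebraic family $f \colon C \to \Mg^n$ and extend the induced map $C \to M_g^n \subset \overline{M_g^n}$ to a morphism $\bar f \colon \bar C \to \overline{M_g^n}$ using properness of the target. Non-isotriviality means $\bar f(\bar C)$ is a positive-dimensional irreducible curve intersecting the interior $M_g^n$. If $\rho \circ f_*(\pi_1(C))$ were finite, then by the defining property of the Shafarevich morphism, $s^{mod}_\rho \circ \bar f(\bar C)$ would collapse to a point, so $\bar f(\bar C)$ would lie in a single fiber of $s^{mod}_\rho$. But by part (1), every fiber of $s^{mod}_\rho$ meeting the interior is zero-dimensional, contradicting the dimension of $\bar f(\bar C)$.

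The main obstacle is the careful invocation of the Gibney--Keel--Morrison theorem: one must trust that its classification of fibrations of $\overline{M_g^n}$ is exhaustive in the form used here, and that the fundamental groups of generic forgetful fibers really realize the geometric surface subgroups $K_i$ (up to finite index) so that the hypothesis on $\rho|_{K_i}$ directly applies. A secondary subtlety is ensuring clean compatibility between the Shafarevich morphism on the stack $\Mgnbark$ and its descent to the coarse moduli space $\overline{M_g^n}$, which is needed so that the Shafarevich property for maps into the stack translates into the contraction statement used in the proof of part (2).
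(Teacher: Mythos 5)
Your proposal is correct and follows essentially the same route as the paper: existence of the Shafarevich morphism from uniformizability and \cite{E6}/\cite{CCE}, the Gibney--Keel--Morrison classification to exclude factorization through a forgetful map (using that the fibers realize the geometric surface subgroups $K_i$, on which $\rho$ is infinite) and to exclude the constant map (infinite image), yielding a birational contraction with exceptional locus in the boundary, after which part (2) follows from the defining property of the Shafarevich morphism since the image curve meets the interior and is not contracted.
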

The proof, postponed at the end of this section, is a simple argument based on results on the geometry of 
the moduli space of curves from \cite{GKM} and the basic properties of the Shafarevich morphism \cite{E6, CCE}. 
We think that the uniformizability  assumption is not  essential  in Proposition \ref{theointro2}, see Remark \ref{unifdrop}. 

Note that the corollary does not apply to the homology representation in
$\mathrm{Sp}(2g, \Z/ k_0 \Z)$ which  has finite image since it takes value in a finite group and in this case $\mathbf{k}=[k_0;-]$.

\begin{proof}[End of proof of Theorem \ref{unitary}]
According to \cite[Corollary 2.7]{AS}, when $g\geq 3$ every  finite rank unitary (or, more generally, unipotent-free) representation of  
$\PMod(\Sigma_{g}^n)$ factors through some quotient  
$\PMod(\Sigma_{g}^n)\slash \Mod(\Sigma_{g}^n)[p]$, for some positive integer $p$. 

By passing to a tuple $\mathbf{k}$ such that  $p$ divides all its components, if needed, we can assume  that $\overline{\mathcal{M}_{g}^n}[\mathbf{k}]$ is uniformizable thanks to Proposition \ref{eyfu}.
Therefore we can apply the result of Proposition \ref{theointro2} to complete the proof of Theorem \ref{unitary}. 
\end{proof}

\begin{remark}
The unitarity assumption in Theorem \ref{unitary} is only needed to insure that the representation factors through some quotient   
$\PMod(\Sigma_{g}^n)\slash \Mod(\Sigma_{g}^n)[p]$, for some positive integer $p$. 
The same proof shows that the result holds, more generally, for finite rank semi-simple representations of some quotient $\PMod(\Sigma_{g}^n)\slash \Mod(\Sigma_{g}^n)[p]$.
However, if we aim at extending Theorem \ref{unitary}  to all  finite rank semi-simple representations $\rho$ of $\PMod(\Sigma_{g}^n)$ we need a version of 
Proposition \ref{theointro2} working for the (non-proper) stacks $\Mg^n$, which  
seems rather difficult.
\end{remark}

\subsection{Shafarevich morphisms for stacks}

Let $\mathcal{X}$ be a connected smooth proper Deligne-Mumford stack. Let $\rho: \pi_1(\mathcal{X}) \to GL_N(\C)$ be 
a semi-simple finite rank complex linear representation.

\begin{prop} \label{shafmorph} Assume that $\mathcal{X}$ is uniformizable. Then there exists a unique up to equivalence 
Shafarevich morphism $s_{\rho}:\mathcal{X} \to \mathcal{S}_{\rho}$. 
\end{prop}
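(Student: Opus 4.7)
My plan is to reduce to the case of smooth projective varieties via the uniformization, and then descend $G$-equivariantly.

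First, I use uniformizability to write $\mathcal X^{an}\cong [X/G]$ with $X$ a smooth proper complex manifold and $G$ a finite group; since $\mathcal X^{mod}$ is projective and $X\to \mathcal X^{mod}$ is finite, $X$ itself is a smooth projective variety. I pull back $\rho$ along the uniformizing cover $\pi:X\to \mathcal X$ (which is a $G$-torsor, hence representable finite étale) to obtain $\rho_X:=\pi^*\rho$, a semi-simple finite rank representation of $\pi_1(X)$. Then I invoke the existence of Shafarevich morphisms for such representations on smooth projective varieties (Eyssidieux \cite{E6}, see also \cite{CCE}) to get a surjective morphism with connected fibers $\tilde s_\rho: X\to \tilde S_\rho$ onto a normal projective variety, characterized (and therefore unique up to equivalence) by the universal property of Definition \ref{defshafmorph} applied to $(X,\rho_X)$.

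Second, since $\pi_1(X)$ is normal in $\pi_1(\mathcal X)$, for each $g\in G$ the pulled-back representation $g^*\rho_X$ is conjugate to $\rho_X$ in $GL_N(\C)$ via $\rho(\tilde g)$, for any lift $\tilde g\in \pi_1(\mathcal X)$ of $g$. By uniqueness of $\tilde s_\rho$ the $G$-action on $X$ lifts canonically to an action on $\tilde S_\rho$ making $\tilde s_\rho$ equivariant. I form the quotient stack $\mathcal S_\rho:=[\tilde S_\rho/G]$, a normal proper Deligne--Mumford stack which is uniformizable by construction (as a finite group quotient of a scheme), and descend $\tilde s_\rho$ to the candidate $s_\rho: \mathcal X\to \mathcal S_\rho$, still surjective with connected fibers at the level of coarse moduli.

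Third, I verify the Shafarevich universal property for $s_\rho$. For a map $f:Z\to \mathcal X$ from a connected algebraic variety, forming the fibre product with $\pi$ yields a finite étale cover $Z'\to Z$ together with a morphism $Z'\to X$; on each connected component $Z'_i$, the image of $\pi_1(Z'_i)$ in $\pi_1(\mathcal X)$ has finite index in $f_*\pi_1(Z)$. Hence $\rho(f_*\pi_1(Z))$ is finite iff each $\rho_X$-image of $\pi_1(Z'_i)$ is, iff $\tilde s_\rho$ contracts every $Z'_i$ to a point, iff $s_\rho^{mod}\circ f(Z)$ is a single point. Uniqueness of $\mathcal S_\rho$ up to equivalence then follows from the universal property by a standard diagonal argument (comparing any two candidates through each other).

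The main obstacle is the equivariant descent: one must check that the $G$-action lifts to $\tilde S_\rho$ strictly coherently, not merely up to 2-isomorphism in a way that would obstruct forming a well-defined quotient stack, and that $[\tilde S_\rho/G]$ really meets the normality and uniformizability conditions required of a Shafarevich target. Semi-simplicity of $\rho$ is essential here, as it provides the rigidity of $\rho_X$ needed to identify $g^*\tilde s_\rho$ with $\tilde s_\rho$ canonically via the conjugator $\rho(\tilde g)$.
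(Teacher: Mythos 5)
Your proposal follows essentially the same route as the paper: uniformize $\mathcal{X}\simeq [X/G]$, pull $\rho$ back to the cover, invoke the existence of the Shafarevich morphism there (the paper uses \cite[Proposition 3.14]{CCE} in the compact K\"ahler/Moishezon setting, which also covers the case where the coarse space is not assumed projective, rather than your reduction to $X$ projective), and descend the $G$-action to obtain $\mathcal{S}_{\rho}=[Sh_{\rho'}(X)/G]$. Your additional care with the equivariant descent and the fibre-product verification of the universal property merely spells out what the paper leaves implicit.
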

\begin{rem}\label{shafcurve}
One could restrict  in Definition \ref{defshafmorph} to the case $Z$ is a smooth algebraic curve. 
 \end{rem}

\begin{proof} First of all if $\mathcal{X}$ is not a stack but a compact K\"ahler manifold  this follows from the existence of the Shafarevich morphism for $\rho$, see
\cite[Proposition 3.14]{CCE}, by the very definition of the Shafarevich morphism.
As the existence of the Shafarevich morphism is bimeromorphically invariant, 
the same holds if $\mathcal{X}$ is  bimeromorphic to a compact K\"ahler manifold, 
in particular if it is Moishezon, hence if $\mathcal{X}$ is representable stack.

Choose $ \psi: X \to \mathcal{X}$ a finite uniformization so that $X$ is a smooth complex algebraic space, with an action of a finite group $G$, so that $\mathcal{X}\simeq [X/G]$. 
Applying the previous remark,  we construct the Shafarevich morphism attached to the restriction $\rho'$ of $\rho$ to $\pi_1(X)$,  say $Sh_{\rho'}: X \to Sh_{\rho'}(X)$. 
The action of $G$ descends and we get a map of stacks $\mathcal{X} \to \mathcal{S}_{\rho}=[  Sh_{\rho'}(X)/G]$ which has the required property. 
\end{proof}

 \begin{rem}\label{rem:proj}
With notation from Definition \ref{defshafmorph}, let us further assume that 
$Z$ is smooth and proper. Then $f^*\rho$ is a finite rank semi-simple complex linear representation (see \cite{Cor}). 
\end{rem}

\begin{lem} If $\mathcal{X}$ has a projective moduli space so has $\mathcal{S}_{\rho}$.
 \end{lem}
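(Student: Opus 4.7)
The plan is to reduce everything to the classical projective setting and then take a finite quotient. Following the construction in the proof of Proposition \ref{shafmorph}, pick a finite uniformization $\psi : X \to \mathcal{X}$ with $\mathcal{X} \simeq [X/G]$. The composition $X \to \mathcal{X} \to \mathcal{X}^{mod}$ is finite, and by hypothesis $\mathcal{X}^{mod}$ is a projective variety; since a finite cover of a projective scheme is a projective scheme, $X$ is a smooth projective variety.

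Next, let $\rho' := \rho\circ (\pi_1(\psi))$ denote the restriction of $\rho$ to $\pi_1(X)$. Because $\rho$ is semi-simple and $\pi_1(X)$ has finite index in $\pi_1(\mathcal{X})$, the representation $\rho'$ is again semi-simple of finite rank. Then invoke the main existence theorem for Shafarevich morphisms of smooth projective manifolds (\cite{E6}, see also \cite[Proposition 3.14]{CCE}): the target $Sh_{\rho'}(X)$ is a \emph{normal projective variety}, not merely a proper analytic space.

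Finally, by the uniqueness (up to canonical isomorphism) of the Shafarevich morphism associated to $\rho'$, the $G$-action on $X$ descends canonically to $Sh_{\rho'}(X)$. The coarse moduli space $\mathcal{S}_\rho^{mod}$ is thus the geometric quotient $Sh_{\rho'}(X)/G$, which is projective because the quotient of a projective variety by a finite group is projective (one can symmetrize a $G$-linearized ample line bundle on $Sh_{\rho'}(X)$, which exists after replacing it by a suitable tensor power).

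The one step that is doing the real work is the citation to \cite{E6}, i.e., the fact that in the projective case the Shafarevich morphism lands in a projective, rather than merely proper K\"ahler, target; everything else is formal. A minor point to check along the way is that the $G$-descent of the Shafarevich morphism is well-behaved, but this is immediate from its characterization by a universal property in Definition \ref{defshafmorph}.
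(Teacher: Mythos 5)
Your reduction steps are fine and match the paper's framework: the finite uniformization $X \to \mathcal{X}$ composed with the map to the coarse space is finite, so $X$ is projective; $\rho'$ is semi-simple; and once $Sh_{\rho'}(X)$ is known to be projective, the quotient by the finite group $G$ (which is what $\mathcal{S}_\rho^{mod}$ is in the construction of Proposition \ref{shafmorph}) is projective. The problem is the step you yourself single out as ``doing the real work'': you assert, by citing \cite{E6} and \cite[Proposition 3.14]{CCE}, that the Shafarevich target of a smooth projective manifold is a \emph{normal projective variety}. According to the paper, that is precisely what is \emph{not} available in those references (``The details being omitted there\dots''): the construction in \cite{E6}/\cite{CCE} only produces $Sh_{\rho'}(X)$ as a normal compact complex space carrying a holomorphic line bundle satisfying the Nakai--Moishezon positivity condition. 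So your proof is circular at its crucial point -- the statement of the lemma is exactly the projectivity you are importing from the literature.

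The paper's proof supplies the missing argument: from the Nakai--Moishezon line bundle one deduces via Siu's solution of the Grauert--Riemenschneider conjecture that $Sh_{\rho'}(X)$ is Moishezon, hence an algebraic space, and then Koll\'ar's Nakai--Moishezon criterion for algebraic spaces \cite[Theorem 3.11]{Kol} gives projectivity; a genuinely delicate point, which your sketch does not touch, is that ampleness of a line bundle on a compact complex space can be tested after pull-back by a finite surjective morphism. To repair your write-up you would need to replace the bare citation by this chain of implications (or an equivalent one); the descent-by-$G$ and uniqueness considerations at the end are correct but peripheral.
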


 \begin{proof}
The projectivity of $\mathcal{S}_{\rho}$  follows from the projectivity of $Sh_{\rho}(X)$ when $X$ is a complex projective manifold \cite{CCE}. The details being omitted there, let us give the idea of the proof. The  normal complex space
$Sh_{\rho}(X)$ comes equipped with a holomorphic line bundle
that satisfies the Nakai-Moishezon criterion for ampleness \cite{E6}.
A compact complex space having such a holomorphic line bundle is projective. Indeed by  Siu's solution of the Grauert-Riemenschneider conjecture, it is Moishezon hence an algebraic space. One then applies \cite[Theorem 3.11]{Kol}. One can alternatively adapt Koll\'ar's proof of the Nakai-Moishezon criterion for algebraic spaces to the complex-analytic context. A somewhat delicate part is that a line bundle on a compact complex space is ample if and only if its pull-back by a finite surjective morphism is ample.
\end{proof}

\begin{lem} \label{fonctoshaf}
 If $\phi:\mathcal{X}'\to \mathcal{X}$ is a map of uniformizable connected smooth proper Deligne-Mumford stacks and $\rho$ is as above, there is a natural map $\mathcal{S}_{\phi^*\rho}\to \mathcal{S}_{\rho}$ such that 
 $\mathcal{S}^{mod}_{\phi^*\rho}\to \mathcal{S}^{mod}_{\rho} $ is a finite morphism.
\end{lem}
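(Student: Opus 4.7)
The plan is to construct the natural map by taking the Stein factorization of $\psi := s_\rho \circ \phi : \mathcal{X}' \to \mathcal{S}_\rho$. Since $\psi$ is proper (both stacks being proper), it factors canonically as $\mathcal{X}' \xrightarrow{\alpha} \mathcal{T} \xrightarrow{\beta} \mathcal{S}_\rho$ with $\alpha$ having geometrically connected fibers and $\beta$ finite. I would show that $\alpha$ satisfies the defining property of the Shafarevich morphism attached to $(\mathcal{X}', \phi^*\rho)$ and therefore, by the uniqueness up to equivalence part of Proposition \ref{shafmorph}, can be identified with $s_{\phi^*\rho} : \mathcal{X}' \to \mathcal{S}_{\phi^*\rho}$. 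The sought finite morphism of moduli spaces is then just $\beta^{mod}$.

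For the verification of the Shafarevich property, fix $f : Z \to \mathcal{X}'$ with $Z$ a connected algebraic variety. The tautological identity $(\phi^*\rho)(f_*\pi_1(Z)) = \rho((\phi \circ f)_*\pi_1(Z))$ together with the defining property of $s_\rho$ from Definition \ref{defshafmorph} show that this image is finite if and only if $\psi(f(Z)) = s_\rho \circ \phi \circ f(Z)$ is a single point of $\mathcal{S}_\rho^{mod}$. Because $\beta$ is finite, the set-theoretic fiber of $\psi$ above such a point is a finite disjoint union of fibers of $\alpha$, so by connectedness of $Z$ the image $f(Z)$ must lie in one fiber of $\alpha$, i.e. $\alpha(f(Z))$ is a single point. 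The reverse implication is automatic from $\beta \circ \alpha = \psi$. Hence $\alpha$ satisfies the characterizing property of $s_{\phi^*\rho}$.

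The main obstacle I expect is to verify that $\mathcal{T}$ is itself a uniformizable smooth proper Deligne-Mumford stack, as required to qualify as a Shafarevich target in Definition \ref{defshafmorph} and before one can invoke the uniqueness clause of Proposition \ref{shafmorph}. To treat this I would mimic the proof of that proposition: pick a finite uniformization $X' \to \mathcal{X}'$ with $X'$ a smooth projective variety on which a finite group $G$ acts with $\mathcal{X}' \simeq [X'/G]$, build the Shafarevich morphism of $X'$ for the restriction of $\phi^*\rho$ to $\pi_1(X')$ via \cite{CCE}, descend the $G$-action to its target, and identify the resulting quotient stack with $\mathcal{T}$ by the universal property of the Stein factorization applied to the $G$-equivariant version of $\psi$. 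Once this identification is in place, finiteness of $\beta^{mod}$ on moduli spaces is automatic from the finiteness built into Stein factorization, completing the proof.
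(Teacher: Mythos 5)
Your argument is correct in substance, but it is organized differently from the paper, whose ``proof'' is only the remark that functoriality is an easy consequence of Definition \ref{defshafmorph}, with a pointer to \cite{E6} for the projective case. The direct argument implicit there uses that $s_{\phi^*\rho}$ \emph{already exists} with all the properties required of a Shafarevich morphism, by Proposition \ref{shafmorph} applied to the uniformizable stack $\mathcal{X}'$: one checks via the defining equivalence that $s^{mod}_{\rho}\circ\phi^{mod}$ is constant on the connected fibers of $s^{mod}_{\phi^*\rho}$ (a connected subvariety contracted by $s^{mod}_{\phi^*\rho}$ has finite $\phi^*\rho$-image, hence finite $\rho$-image after composing with $\phi$, hence is contracted by $s^{mod}_{\rho}$), so it factors through $\mathcal{S}^{mod}_{\phi^*\rho}$; the induced map is proper because its source is, and quasi-finite because a curve inside a positive-dimensional fiber would lift to a curve in $\mathcal{X}'$ contracted by $s_{\rho}\circ\phi$ but not by $s_{\phi^*\rho}$, contradicting the definition; hence it is finite. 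Your route instead manufactures a candidate $\alpha:\mathcal{X}'\to\mathcal{T}$ by Stein factorization of $s_{\rho}\circ\phi$ and then appeals to uniqueness, which is fine — your verification of the ``iff'' property for $\alpha$ is correct — but it saddles you with the extra task you flag, namely showing that $\mathcal{T}$ qualifies as a Shafarevich target; and your proposed fix (descending the \cite{CCE} Shafarevich morphism of a finite uniformization $X'$ and matching its quotient with $\mathcal{T}$) is in effect a re-run of the proof of Proposition \ref{shafmorph} together with the same fiber comparison the direct argument uses, so the Stein factorization buys no real savings, though nothing in it is wrong. Two small corrections: Definition \ref{defshafmorph} asks the target to be \emph{normal}, not smooth — your $\mathcal{T}$ will only be normal (which follows from normality of $\mathcal{X}'$ and properness of $\psi$), and that is all you need to check; and the uniformization $X'$ should only be assumed to be a smooth algebraic space (Moishezon), as in the proof of Proposition \ref{shafmorph}, rather than a smooth projective variety.
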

\begin{proof}
 This functoriality of the Shafarevich morphism is an easy consequence of the definition, see \cite{E6} for the case of compact projective manifolds. 
\end{proof}

\begin{rem} \label{unifdrop} For smooth Deligne-Mumford orbifolds with a projective moduli space, one can probably drop the uniformizability condition in Proposition \ref{shafmorph} and Lemma \ref{fonctoshaf}. 
One has  to use \cite[Prop. 1.16]{ajm} and redo the construction of the Shafarevich morphism in \cite{E6} working on the  manifold  $\mathcal{X}'$ which is endowed with a properly discontinuous
action of $\pi_1(\mathcal{X})$. However, the existing litterature is restricted to a less general setting. Since we have no convincing application to Theorem \ref{theointro2} when 
the uniformizability assumption is not satisfied, we leave it to the interested reader. 
\end{rem}

\subsection{Application to ${\overline{\mathcal{M}_{g}^{n}}}[\mathbf k]$ }
In \cite{GKM} Gibney, Keel and Morrison described all fibrations of  ${\overline{{M}_{g}^{n}}}$  to projective varieties. Recall that a 
{\em fibration} means a proper surjective morphism with  geometrically connected fibers.  
Specifically, they proved: 
\begin{theorem}[\cite{GKM} Corollary 0.10, 0.11]\label{GKMtheorem}
If $g\geq 2$ and $n\geq 1$, then any fibration of ${\overline{{M}_{g}^{n}}}$ to a projective variety factors through a projection to ${\overline{{M}_{g}^{j}}}$ for some $j<n$, while ${\overline{{M}_{g}}}$ has no fibrations. Moreover, if $g\geq 1$, then any birational morphism from ${\overline{{M}_{g}^{n}}}$ to a projective variety 
has exceptional locus contained in $\partial {\overline{{M}_{g}^{n}}}$. 
\end{theorem}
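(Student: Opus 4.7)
The plan is to follow a Mori-theoretic strategy. A fibration $f: \overline{M_g^n} \to Y$ onto a normal projective variety $Y$ corresponds to a nef, semiample class $D = f^* A$ with $A$ ample on $Y$, and if the fibration is nontrivial (meaning $\dim Y < \dim \overline{M_g^n}$) then $D$ is nef but not big. The theorem therefore reduces to the classification of nef, non-big classes on $\overline{M_g^n}$, together with the statement that each such class is a pullback from $\overline{M_g^j}$ for some $j<n$.

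First I would recall the standard basis of $\mathrm{Pic}(\overline{M_g^n})_\Q$ --- the Hodge class $\lambda$, the psi-classes $\psi_i$, and the boundary classes $\delta_{\mathrm{irr}}$, $\delta_{h,S}$ --- together with the explicit nef and bigness combinations established by Cornalba--Harris, Keel, and others. For $g\geq 2$ many natural mixtures of $\lambda$, $\psi_i$ and boundary terms are known to be big, so any candidate nef non-big $D$ is numerically constrained to a narrow subcone of the effective cone.

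Next I would analyse $D$ against the family of $F$-curves, the $1$-dimensional strata of the boundary stratification. Any nef divisor pairs nonnegatively with every $F$-curve, and the combinatorial types of these curves can be enumerated. The crux is to show that if $D$ has zero intersection with every $F$-curve not contracted by a forgetful projection $\pi_j: \overline{M_g^n} \to \overline{M_g^j}$, then $D \equiv \pi_j^* D'$ for some nef $D'$ on $\overline{M_g^j}$; conversely, if $D$ is positive on some non-forgetful-type $F$-curve, one deduces via intersection-theoretic inequalities that $D$ is big, contradicting nontriviality of the fibration. Iterating on $j$ yields the first assertion. The $\overline{M_g}$ statement then follows, since $\mathrm{Pic}(\overline{M_g})_\Q$ is spanned by $\lambda$ and the boundary $\delta_i$, and $F$-curve positivity forces every nef class to be big. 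The exceptional-locus statement for birational morphisms is a direct corollary: any contracted divisor must be nef-perpendicular to a family of curves in its fibers, and for $g\geq 1$ no non-boundary divisor has this property.

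The main obstacle is the $F$-curve analysis, i.e.\ ruling out extremal rays other than those from forgetful maps. This is essentially a case of Faber's $F$-conjecture; the GKM argument proceeds by reducing, via clutching morphisms, to nef-cone statements on $\overline{M_{0,N}}$ that they then settle through explicit intersection-theoretic inequalities on genus-zero strata. I would not attempt to reprove this from scratch in the present note --- rather, I would cite the GKM analysis and carefully verify that their Corollaries 0.10--0.11 cover precisely the formulation used in the subsequent application to Shafarevich morphisms.
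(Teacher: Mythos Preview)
The paper does not give a proof of this theorem at all: it is quoted verbatim from \cite{GKM} (their Corollaries 0.10 and 0.11) and used as a black box in the proof of Proposition~\ref{gibney}. Your final paragraph --- cite GKM and check that their statement matches the formulation needed here --- is therefore exactly what the paper does, and is the correct plan.

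One small caution about your sketch of the GKM argument, in case you intend to keep it as commentary: the fibration and exceptional-locus statements in \cite{GKM} are proved \emph{unconditionally} and do not rely on the $F$-conjecture. What GKM show is that for $g\geq 2$ every nef divisor on $\overline{M_g^n}$ is either big or numerically a pullback along a forgetful map (their Theorem~0.9), and this is obtained by a direct bigness argument rather than by assuming the $F$-cone description. Their separate reduction of the $F$-conjecture to genus zero is logically independent of Corollaries~0.10--0.11. So if you retain the sketch, you should decouple it from the $F$-conjecture; otherwise simply citing \cite{GKM} as the paper does is sufficient.
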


We now want to analyse the Shafarevich morphisms associated to $\mathcal{X}={\overline{\mathcal{M}_{g}^{n}}}[\mathbf k]$. Henceforth  we  assume $\overline{\mathcal{M}_{g}}[\mathbf k]$ is uniformizable,  and fix $\rho: \pi_1({\overline{\mathcal{M}_{g}^{n}}}[\mathbf k]) \to GL_N(\C)$ a semi-simple representation. 

Let $C_i\subset \mathcal{M}^n_{g} $ be the algebraic curve which appears as a fiber of the (representable) $i$-th forgetful map $\mathcal{M}^n_{g} \to \mathcal{M}^{n-1}_{g}$. Recall that $\pi_1(C_i)$ identifies with $K_i$, when $g\geq 2$, from the Birman exact sequence.

\begin{proposition} \label{gibney}
Whenever  $\pi_1(\overline{\mathcal{M}^n_{g}}[\mathbf k])$, $g\geq 2$, has a finite rank complex linear representation $\rho$ with an infinite image on all $\pi_1(C_i)$ $i=1, \ldots, n$, then the Shafarevich morphism $s^{mod}_{\rho}:\overline{M^n_{g}}\to \mathcal{S}^{mod}_{\rho}$
is a birational contraction whose exceptional locus lies in the boundary.
 \end{proposition}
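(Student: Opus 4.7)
The plan is to combine the functoriality of Shafarevich morphisms with the description of all fibrations of $\overline{M_g^n}$ given by Gibney--Keel--Morrison (Theorem \ref{GKMtheorem}). The argument is a clean dichotomy: either the Shafarevich morphism $s^{mod}_\rho$ is birational, in which case the ``moreover'' clause of Theorem \ref{GKMtheorem} immediately yields the boundary assertion, or its generic fiber is positive dimensional, in which case GKM forces a factorization through a nontrivial forgetful map whose generic fiber contradicts the infiniteness hypothesis on the restrictions $\rho|_{\pi_1(C_i)}$.

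First I would set up the Shafarevich morphism. Since $\overline{\mathcal{M}_g^n}[\mathbf k]$ is uniformizable, Proposition \ref{shafmorph} produces a stacky Shafarevich morphism $s_\rho:\overline{\mathcal{M}_g^n}[\mathbf k]\to\mathcal{S}_\rho$, and the projectivity lemma above ensures that the coarse moduli target $\mathcal{S}^{mod}_\rho$ is a projective variety. The induced morphism on moduli spaces $s^{mod}_\rho:\overline{M_g^n}\to\mathcal{S}^{mod}_\rho$ is then a proper surjection with geometrically connected fibers between projective varieties, i.e., a fibration in the sense of GKM.

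Next comes the dichotomy. Suppose $s^{mod}_\rho$ is not birational, so $\dim\mathcal{S}^{mod}_\rho<\dim\overline{M_g^n}$. If $n=0$, Theorem \ref{GKMtheorem} asserts that $\overline{M_g}$ admits no such fibration, an immediate contradiction. If $n\geq 1$, the same theorem forces a factorization through some forgetful projection $\pi:\overline{M_g^n}\to\overline{M_g^j}$ with $j<n$. Decomposing $\pi$ as a composition of one-puncture forgetful maps, at least one step forgets some puncture of index $i\in\{1,\dots,n\}$, and the generic fiber of that step is precisely the smooth curve $C_i$ appearing in the statement. Consequently $C_i$, viewed as an algebraic subvariety of $\overline{\mathcal{M}_g^n}[\mathbf k]$ via the open inclusion $\mathcal{M}_g^n\hookrightarrow\overline{\mathcal{M}_g^n}[\mathbf k]$ composed with the fiber inclusion, is contracted to a point by $s^{mod}_\rho$. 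By the defining property of the Shafarevich morphism (Definition \ref{defshafmorph}) this contraction forces $\rho(\pi_1(C_i))=\rho(K_i)$ to be finite, contradicting the hypothesis. Hence $s^{mod}_\rho$ must be birational, and the ``moreover'' clause of Theorem \ref{GKMtheorem} (applicable since $g\geq 1$) confines the exceptional locus to $\partial\overline{M_g^n}$.

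The step I expect to require the most care is the identification, within the GKM factorization, of the generic fiber of a single-puncture forgetful step with the curve $C_i$, so that the hypothesis on $\rho|_{\pi_1(C_i)}$ applies via the Birman identification $\pi_1(C_i)\simeq K_i$; one should also be mindful that the Shafarevich property is stated on the uniformizable stack while we read off contraction on its coarse moduli space, a gap that is closed by the equivalence between finiteness of $\rho$-images on $Z$ and contraction of $s^{mod}_\rho\circ f(Z)$ built into Definition \ref{defshafmorph}. Once these identifications are in hand, the rest of the argument is purely formal from Definition \ref{defshafmorph} and Theorem \ref{GKMtheorem}.
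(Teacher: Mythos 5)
Your proposal is correct in substance and follows essentially the same route as the paper: projectivity of $\mathcal{S}^{mod}_{\rho}$ (Remark \ref{rem:proj}), the Gibney--Keel--Morrison description of fibrations of $\overline{M_{g}^{n}}$, and the hypothesis on $\rho|_{\pi_1(C_i)}$ to rule out a factorization through a forgetful map, leaving a birational morphism with exceptional locus in $\partial\overline{M_{g}^{n}}$. The one point you gloss over, and which the paper treats explicitly, is the possibility that $s^{mod}_{\rho}$ is the \emph{constant} morphism: Theorem \ref{GKMtheorem} cannot exclude a zero-dimensional target (a constant map is trivially a proper surjection with connected fibers), so your ``immediate contradiction'' in the case $n=0$ does not follow from GKM alone; the paper rules it out by invoking the infinite image of $\rho$ itself, an assumption that is implicit in the statement (and explicit in Proposition \ref{theointro2}) but never used in your argument. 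For $n\geq 1$ your $C_i$-contraction argument does cover constancy as a degenerate case, so the gap only concerns $n=0$; adding one line ``$s^{mod}_{\rho}$ is nonconstant since $\rho$ has infinite image'' closes it and makes your proof coincide with the paper's.
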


\begin{proof}
By Remark \ref{rem:proj} $\mathcal{S}^{mod}_{\rho}$ is a projective variety. 
According to Theorem \ref{GKMtheorem} above every fibration ${\overline{{M}_{g}^{n}}}\to V$ to a projective variety $V$ is the  composition of a birational morphism ${\overline{{M}_{g}^{j}}}\to V$  with 
the tautological projection ${\overline{{M}_{g}^{n}}}\to {\overline{{M}_{g}^{j}}}$ for some $j<n$. Thus 
the morphism $s_{\rho}^{mod}: \overline{M_{g}^{n}} \to \mathcal{S}^{mod}_{\rho}$  is either a birational morphism whose exceptional locus lies in $\partial \overline{M_{g}^{n}}$ or factors through
 one of the $n$ natural forgetful maps $\overline{M_{g}^{n}} \to \overline{M_{g}^{n-1}}$. 
Observe that $s_{\rho}^{mod}$ cannot be the constant morphism since $\rho$ has an infinite image.  When $n\geq 1$ the curves $C_i$ are not contracted by $s_{\rho}^{mod}$ because $\pi_1(C_i)$ have infinite images 
 by $\rho$, according to our assumptions. 
\end{proof}

\begin{proof}[Proof of Proposition \ref{theointro2}] 
It follows from  Remark \ref{shafcurve} and Proposition \ref{gibney} that $s_{\rho}^{mod}$ is a birational morphism and an isomorphism on ${M_{g}^{n}}$. Therefore the curve $C$ is not contracted by $s_{\rho}^{mod}$ and hence $\rho(\pi_1(C))$  is infinite by the definition \ref{shafmorph} of the Shafarevich morphism. 
\end{proof}

\section{Proof of Theorem \ref{corointro1}}
It suffices to show that (linearized versions of) Reshetikhin-Turaev representations satisfy 
the assumptions of Proposition \ref{theointro2}. To this purpose we need to prove some infiniteness statements for the quantum representations considered. 
Specifically, we shall prove: 

\begin{prop}\label{infiniteness} Let  $g,n$ with $2g-2+n>0$ and 
$p\not\in\{1,2,3,4,5,6,8,10,12,20,24\}$. Then 
there exists some quantum representation of the groups 
$\Mod(\Sigma_{g}^{n})/\Mod(\Sigma_g^n)[p]$, for odd $p$ and 
$\Mod(\Sigma_{g}^{n})/\Mod(\Sigma_g^n)[2p,  p/{\rm g.c.d.}(p,4)]$ for even $p$, respectively, 
whose image is infinite.
\end{prop}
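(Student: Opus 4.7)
My strategy would be to exhibit, for each admissible $p$, a single coloring $(\mathbf{i})$ and reduce — via the gluing axioms of the BHMV-TQFT — to a base case on a small surface where infiniteness of some Jones/braid representation is classically known.

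\textbf{Step 1: Subsurface reduction.} If $\Sigma'\hookrightarrow \Sigma_g^n$ is an essential subsurface, the inclusion induces a homomorphism $\Mod(\Sigma')\to \Mod(\Sigma_g^n)$ (well-defined up to conjugation). By the gluing axiom of the BHMV TQFT, for every coloring $(\mathbf{i})$ of the marked points of $\Sigma_g^n$, the restriction of $\rho_{g,p,(\mathbf{i})}$ to (the image of) $\Mod(\Sigma')$ decomposes as a direct sum of quantum representations of $\Sigma'$ indexed by the compatible colorings of $\partial \Sigma'$. Consequently, if for some coloring of $\partial \Sigma'$ the corresponding RT representation of $\Mod(\Sigma')$ has infinite image, then some $\rho_{g,p,(\mathbf{i})}$ on $\Sigma_g^n$ has infinite image as well. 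Since $2g-2+n>0$ and $\Sigma_g^n \neq \Sigma_0^3$ (the trivial TQFT case is excluded by $2g-2+n>0$ together with allowed $(g,n)$), $\Sigma_g^n$ contains an essential copy of $\Sigma_{1,1}$ or $\Sigma_{0,4}$.

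\textbf{Step 2: Analysis of the base case.} On $\Sigma_{1,1}$ (respectively $\Sigma_{0,4}$), the mapping class group is $SL(2,\Z)$ (respectively a finite extension of $PSL(2,\Z)$), and the quantum representations $\rho_{1,p,(i)}$ and $\rho_{0,p,(i_1,i_2,i_3,i_4)}$ with generic colorings can be identified with (summands of) the Jones representation of the $3$-strand braid group $B_3$ at a primitive $4p$-th root of unity, via the standard identification $B_3 \twoheadrightarrow PSL(2,\Z)$. More generally, by choosing a pair-of-pants decomposition and peeling off a four-holed sphere component, the RT representations on the $3$-holed components become Jones/Temperley--Lieb representations of $B_n$ for small $n$.

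\textbf{Step 3: Classical infinite-image results.} The infiniteness of the Jones representation of $B_3$ (and more generally $B_n$) at a $2p$-th root of unity is a classical question. Combining the results of Jones, Goldschmidt--Jones and Freedman--Larsen--Wang (and for quantum $SU(2)$, results of Masbaum \cite{Mas}, Funar \cite{F,LF}, Larsen--Pak--Santos \cite{LPS}, Koberda--Santharoubane \cite{KoSa}), the image of the Jones/RT representation on the base surface becomes finite precisely for $p$ inside the explicit set $\{1,2,3,4,5,6,8,10,12,20,24\}$; outside this set, some coloring yields a representation with infinite image. Pulling back via Step 1 yields the desired $\rho_{g,p,(\mathbf{i})}$.

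\textbf{Step 4: Factorization.} It remains to note that $\rho_{g,p}$ factors through the stated quotients: as recalled in the introduction, $\Mod(\Sigma_g^n)[p]$ lies in the kernel for $p$ odd, and $\Mod(\Sigma_g^n)[16p,2p]$ — in fact $\Mod(\Sigma_g^n)[2p, p/\gcd(p,4)]$ by the refinement from \cite{mcgorb} — lies in the kernel for $p$ even. Hence the infinite-image representation descends to the quotient claimed in the proposition.

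\textbf{Main obstacle.} The delicate point is matching the exceptional set precisely to $\{1,2,3,4,5,6,8,10,12,20,24\}$: different values of $p$ may require different choices of base subsurface (some handled by $\Sigma_{1,1}$, others by $\Sigma_{0,4}$ or $\Sigma_{0,5}$) and of internal coloring to extract an infinite-image summand. The remaining work is bookkeeping: ensuring that for every $p$ outside the exceptional set one genuinely has an infinite-image summand in some $\rho_{g,p,(\mathbf{i})}$, which amounts to invoking the appropriate case of the Jones/Hecke classification and checking the gluing coloring compatibility.
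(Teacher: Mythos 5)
Your Steps 1, 2 and 4 match the paper's strategy: the paper also reduces via the fusion rules to the base cases $(g,n)\in\{(1,1),(0,4)\}$, handles $(0,4)$ by citation to \cite{F,Mas}, passes to linear lifts of the $B_3$-representation on $\Sigma_1^1$, and notes the factorization through the stated quotients. The problem is Step 3, where you defer the identification of the exceptional set to ``classical results'' and call the rest bookkeeping; that deferral is a genuine gap, because the exceptional set $\{1,2,3,4,5,6,8,10,12,20,24\}$ is \emph{not} the finiteness locus of the closed-surface (or generic braid-group) quantum representations that the literature you cite addresses. The reduction only goes one way: infiniteness on the small subsurface implies infiniteness upstairs, so you need infiniteness of $\rho_{1,p,(i)}$ on $\Mod(\Sigma_1^1)$ for some admissible boundary color $i$, and this behaves differently from the closed case -- e.g.\ for $p\in\{5,10,12,20,24\}$ all the $\Sigma_{1,1}$ blocks give finite image even though $\rho_{g,p}(\Mod(\Sigma_g))$ is infinite for $g$ large, which is exactly why these values appear in the exceptional set. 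So no appeal to Jones, Freedman--Larsen--Wang, or the closed-surface results of \cite{F,LF,KoSa,LPS,Mas} supplies the needed statement for all $p$ outside the exceptional set.

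Concretely, the paper has to prove two new-looking lemmas at the base case. For $p$ with odd part $q\geq 7$ it takes the specific color $i=q-5$, identifies the resulting $2$-dimensional block with a twisted Burau representation of $B_3$ at a root of unity, and invokes the Funar--Kohno triangle-group result \cite{FK}; this part is close in spirit to your Step 3 and could be made rigorous by citation. But for $p\equiv 0 \pmod 4$ (odd part $\leq 5$) there was no classical result covering all such $p$ (Korinman \cite{Kor} had only special families): the paper takes the color $i=2k-6$ for $p=4k$, computes the invariant Hermitian form on the $5$-dimensional block explicitly via quantum integers, shows one can choose the primitive root $A$ so the signature is $(+,+,-,+,+)$, applies Coxeter's criterion (irreducible plus indefinite invariant Hermitian form forces infinite image), and then runs an eigenvalue analysis using the scalar action of $(tt^*)^3$ to rule out, or reduce to smaller irreducible indefinite summands, any invariant subspace, under the hypothesis that $p$ does not divide $120$. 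This computation is the mathematical heart of the proposition and the reason the exceptional set has its precise shape; your proposal as written does not contain it and cannot obtain it from the cited sources.
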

It is enough to consider the case when 
$(g,n)\in\{(1,1), (0,4)\}$. The case $(0,4)$ was treated in \cite{F,Mas}.
Assume from now on that $(g,n)=(1,1)$. 

Let $B_3$ denote the braid group on two strands, with the usual presentation in its standard 
generators:  
\[ B_3 =\langle \sigma_1, \sigma_2\; |\; \sigma_1\sigma_2\sigma_1=\sigma_2\sigma_1\sigma_2\rangle\]
There is a surjective homomorphism $B_3\to \Mod(\Sigma_1^1)$, sending the 
standard generators $\sigma_1$ and $\sigma_2$ into the Dehn twists $T_a$ and $T_b$ respectively. Here $a$ is a meridian and $b$ a longitude of the torus $\Sigma_1^1$. 
By composing the quantum representation $\rho_{1, p, (i)}$ of $\Mod(\Sigma_1^1)$ with this surjection we obtain a projective representation of $B_3$. As $H^2(B_3)=0$, there is a 
linear lift of this projective representation. However, this lift depends on the  
choice of the lifts $t$ and $t^*$ of $\rho_{1, p, (i)}(T_a)$ and 
$\rho_{1, p, (i)}(T_b)$, respectively. A particular lift was defined in 
(\cite{GM2}, Prop. 11.7), although this was implicit in earlier work as \cite{MR}.

For $p\in \N^*$ we denote by $v_2(p)\in \N^*$ the 2-adic valuation, i.e.
largest integer such that $\frac{p}{2^{v_2(p)}}$ is an odd integer.  
We now prove the following lemma, which implies the claimed result for odd $p$:
\begin{lem}\label{infinitenessodd} If 
$\frac{p}{2^{v_2(p)}}\geq 7$, then  there exists $(\mathbf{i})\in \mathcal{C}_p^n$ such that $\rho_{1,\frac{p}{2^{v_2(p)}},(\mathbf{i})}(\PMod(\Sigma_{1}^{1}))$ is infinite.
\end{lem}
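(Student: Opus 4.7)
The plan is to exhibit, for each odd $q\ge 7$, a color $(\mathbf{i})$ and an element of $\PMod(\Sigma_1^1)$ whose image under the chosen linear lift $\tilde\rho_{1,q,(\mathbf{i})}$ has infinite order modulo scalars; this clearly suffices.

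First, I would specialize to the trivial boundary color $(\mathbf{i})=(0)$, so that $\rho_{1,q,(0)}$ descends along the forgetful map $\PMod(\Sigma_1^1)\twoheadrightarrow\Mod(\Sigma_1)\cong SL_2(\Z)$ to the $SU(2)_q$ quantum representation on the space of level-$q$ conformal blocks of the closed torus, of dimension $d=\lfloor(q-1)/2\rfloor\ge 3$ for $q\ge 7$. Using the surjection $B_3\twoheadrightarrow\Mod(\Sigma_1^1)$ and the explicit linear lift from (\cite{GM2}, Prop.~11.7), the standard generators act in a fusion basis as follows: $t:=\tilde\rho(\sigma_1)$ is the diagonal matrix with entries $A^{j(j+2)}$, $0\le j\le d-1$, for $A=e^{i\pi/(2q)}$, while $t^*:=\tilde\rho(\sigma_2)=s\,t\,s^{-1}$, where $s$ is the unitary $SU(2)_q$ $S$-matrix with entries $s_{jk}=\sqrt{2/q}\,\sin\bigl((j+1)(k+1)\pi/q\bigr)$.

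Second, I would pick an element $\gamma\in B_3$ whose image in $SL_2(\Z)$ is hyperbolic, for instance a sufficiently high power of $\sigma_1\sigma_2$, and reduce the lemma to showing that $\tilde\rho(\gamma)^n$ is never scalar, i.e., that some ratio of two eigenvalues of $\tilde\rho(\gamma)$ fails to be a root of unity.

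The main obstacle is the infinite-order assertion. A concrete route would be to compute the trace $\mathrm{tr}(\tilde\rho(\gamma))$ as an explicit element of $\Z[\zeta_{4q}]$, apply a suitable automorphism in $\mathrm{Gal}(\Q(\zeta_{4q})/\Q)$, and show that some Galois conjugate has absolute value exceeding $d$, which by the triangle inequality forces an eigenvalue off the unit circle and hence infinite order modulo scalars. A more conceptual alternative, which I would prefer, is to invoke the identification (going back to \cite{MR}) of $\rho_{1,q,(0)}$ with the Weil representation of $SL_2(\Z)$ on theta functions of level $q$; for $q\ge 7$ odd the projective image of this Weil representation is a well-studied infinite arithmetic subgroup of $PU(d)$, in particular containing elements of infinite order. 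Either approach exhibits the color $(\mathbf{i})=(0)$ for which the restriction of $\rho_{1,q,(\mathbf{i})}$ to $\PMod(\Sigma_1^1)$ has infinite image, proving the lemma.
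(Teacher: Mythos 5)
Your choice of boundary color $(\mathbf{i})=(0)$ is exactly the wrong one, and this breaks the whole argument. With the trivial color the representation of $\PMod(\Sigma_1^1)$ factors through the closed-torus Reshetikhin--Turaev representation of $SL_2(\Z)$, and that representation has \emph{finite} projective image: it is the Weil (metaplectic) representation attached to a finite quadratic form, which factors through a finite congruence quotient of $SL_2(\Z)$ (this is the classical level-$N$ property of the $S$- and $T$-matrices, made precise by the congruence subgroup property for modular categories). So your claim that the projective image is "a well-studied infinite arithmetic subgroup of $PU(d)$" is false, and the Galois/trace route cannot succeed either: since every element of the image genuinely has finite order, all Galois conjugates of traces are sums of roots of unity bounded by $d$, and no hyperbolic element of $SL_2(\Z)$ will ever yield an eigenvalue off the unit circle. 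In short, for color $0$ the statement you are trying to prove is simply untrue, so the lemma cannot be established this way; a nontrivial boundary color is essential.

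The paper's proof takes a different and unavoidable route: it sets $q=p/2^{v_2(p)}$, chooses the boundary color $i=q-5$, so that the space of conformal blocks is $2$-dimensional, lifts the projective $B_3$-action linearly (using the lift of Gilmer--Masbaum), identifies the resulting $2$-dimensional representation with a Burau representation twisted by a character evaluated at $-A^{\pm 2}$, where $-(-A^{\pm2})$ is a primitive $q$-th root of unity, and then invokes the result of Funar--Kohno that the image of Burau at such a parameter is an infinite triangle group once $q\notin\{2,3,4,5\}$, hence for $q\ge 7$. If you want to salvage your strategy, you must first replace $(\mathbf{i})=(0)$ by a color for which infiniteness is at least plausible, and then supply an actual infiniteness certificate (the triangle-group identification, or a genuine trace/Galois computation for that block); as written, both of your proposed endgames rest on the incorrect premise that the genus-one, color-$0$ representation is infinite.
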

\begin{proof}
The space $W_{1,p, (i)}$ has a basis given by $p$-admissible colorings of the tadpole graph with one tail labeled $i$. Thus an element of this basis  
is determined by the color $a$ on the loop edge.  
Note that $t$ and $t^*$ have the eigenvalues $(-1)^aA^{a(a+2)}$, where $a$
is belongs to the $p$-admissible colorings (see \cite{GM2,MR}).

Let first $p$ be odd, $p\geq 7$. 
The set of colors is then $\mathcal{C}_p=\{0,2,4,\ldots,p-3\}$.  
We set the color $i=p-5>0$. 
Then the space of conformal blocks $W_{1,p, (p-5)}$ has dimension 2. Indeed, 
when $p=4k+1$ the color $a$ of the loop edge in a $p$-admissible coloring 
of the tadpole with one tail labeled $p-5$  takes the values $a\in\{2k-2,2k\}$ and 
when  $p=4k+1$ the color $a$ takes the values $a\in\{2k,2k+2\}$.

By direct calculation, or using \cite[Propositions 2.1 and 2.3]{FK} and the fact that the image is not 
abelian, we derive that the 2-dimensional linear representation of $B_3$ is 
the Burau representation twisted by a character. 
The eigenvalues of a Dehn twist along a nonseparating simple closed curve 
in the linear lift of the quantum representation are $A^{a(a+2)}$, where $a$ is 
$p$-admissible. 
As the Burau representation at the root of unity $q$ has eigenvalues 
$1$ and $-q$, it follows that the Burau representation factor arising above 
is the one evaluated at $q=-A^{-2}$, if $p=4k+1$ and $q=-A^2$ if $q=4k+3$.  
Since $A$ is a primitive $2p$-th root of unity we derive that 
$-q$ is a primitive $p$-th root of unity.  According to \cite[Propositions 3.2 and 3.4]{FK} 
the image of the Burau representation of a suitable pure braid subgroup at the negative of a $p$-th 
root of unity is an infinite triangle group, as soon as $p\not\in\{2,3,4,5,6\}$. 
This implies the claim for odd $p$.

When $p$ is even  and $q$ is the maximal odd divisor $q=\frac{p}{2^{v_2(p)}}$
we take $i=q-5$. Then the image of $\rho_{g,q,(\mathbf{i})}(\PMod(\Sigma_{g}^{n}))$ is infinite, as soon as $q\geq 7$.  
\end{proof}

To deal with the even case, we first prove: 

\begin{lem}\label{infinitenesseven} If $p\equiv 0\; ({\rm mod}\; 4)$ and 
$p$ does not divide $120$, then  $\rho_{1,p,(p-6)}(\PMod(\Sigma_{1}^{1}))$ is infinite.
\end{lem}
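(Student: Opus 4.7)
The strategy is to parallel the proof of Lemma \ref{infinitenessodd} with boundary color $i = p-6$ in place of $i = p-5$. First I would compute the admissibility of the tadpole graph with tail colored $p-6$: the loop color $a$ must satisfy $p-6 \leq 2a$ together with $2a + (p-6) \leq 2p-4$ and the relevant parity condition. Under the hypothesis $p \equiv 0 \pmod 4$, the integer $p/2$ is even and the relevant (even-parity) block of admissible loop colors reduces to the two values $a \in \{p/2 - 2,\ p/2\}$, so that the corresponding summand of $\rho_{1,p,(p-6)}(\PMod(\Sigma_1^1))$ acts on a two-dimensional space.

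Composing this two-dimensional representation with the standard surjection $B_3 \twoheadrightarrow \Mod(\Sigma_1^1)$ sending $\sigma_1 \mapsto T_a$ and $\sigma_2 \mapsto T_b$ produces a projective representation of $B_3$ which lifts to a linear one since $H^2(B_3) = 0$, exactly as in the odd case. I would then show that the representation is non-abelian (using, e.g., that the $S$-matrix entry coupling the two admissible colors $p/2 - 2$ and $p/2$ is nonzero, so that $T_a$ and $T_b$ are not simultaneously diagonalizable) and apply the classification of irreducible two-dimensional linear representations of $B_3$ from \cite{FK} to identify it, up to a character twist, with the Burau representation at some parameter $q \in \C^*$. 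Matching the eigenvalues $(-A)^{a(a+2)}$ of the image of $T_a$ for $a \in \{p/2-2,\ p/2\}$ against the Burau eigenvalues $\{1, -q\}$, and noting that their ratio equals $(-A)^{2p}$ up to the lift ambiguity, one pins down $q$ as a specific root of unity whose order depends on $p \pmod 8$, in the same spirit as the analysis $q = -A^{\pm 2}$ in the proof of Lemma \ref{infinitenessodd}.

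The proof is then completed by invoking the main result of \cite{FK}: the image of the Burau representation at this $q$ is an infinite triangle group unless the associated triangle group is one of the finitely many spherical or Euclidean ones. Translating the finite-image exceptional values of $q$ back through the identification $q = q(p)$, these should be exactly the values for which $p \mid 120$, so the hypothesis $p \nmid 120$ forces infiniteness. The main obstacle will be this final bookkeeping step: carefully resolving the choice of linear lift (a $4p$-th versus $2p$-th root of unity, depending on the class of $p$ modulo $8$) and confirming that the exceptional finite-triangle-group values of $q$ in \cite{FK} correspond precisely to the divisors of $120$.
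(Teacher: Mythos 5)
There is a genuine gap, and it occurs at the very first step: the claimed two-dimensional space of conformal blocks does not exist. For $p\equiv 0\ (\mathrm{mod}\ 4)$ the boundary color must be even, and the genus-one blocks with even boundary color all have \emph{odd} dimension (the admissible loop colors form a string of $\frac{p-2}{2}-i$ consecutive values, which is odd when $i$ is even); in particular the parity condition you invoke never cuts the admissible set down to the two values $\{p/2-2,\,p/2\}$. This is precisely why the Burau/\cite{FK} argument of Lemma \ref{infinitenessodd} cannot be transplanted to the even case: there is no $2$-dimensional summand to identify with a Burau representation, and the natural $3$-dimensional block is known to have \emph{finite} image for all choices of $A$ (its invariant Hermitian form is positive definite). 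The actual proof works with the $5$-dimensional block $W_{1,p,(2k-6)}$ for $p=4k$, computes the BHMV-invariant Hermitian form, chooses the primitive $2p$-th root $A$ so that the signature is $(+,+,-,+,+)$, and applies Coxeter's criterion (Lemma \ref{coxeter}): an irreducible finite-dimensional representation preserving an indefinite Hermitian form has infinite image. Reducibility is then excluded or handled by a case analysis on invariant subspaces of dimension $1$ or $2$, using that $(tt^*)^3$ acts by a scalar; this is exactly where the hypothesis $p\nmid 120$ enters, via the relations $\zeta^{60}=1$ or $\zeta^{120}=1$ that a forbidden invariant subspace would force.

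Your final ``bookkeeping'' step is also unsubstantiated and would not come out right even if a $2$-dimensional block existed. The exceptional set for the Burau representation of $B_3$ at a root of unity is governed by the order of the parameter being small (the spherical and Euclidean triangle groups), so under any identification $q=q(p)$ of fixed ``slope'' the finite cases would be an initial segment of values of $p$ (which would wrongly include $p=16$ and wrongly exclude $p=40,60,120$), not the set of divisors of $120$. In the paper the divisibility condition does not come from Burau exceptional parameters at all: it comes from the reducibility analysis just described, together with the separate phenomenon that at $p\in\{12,20,24\}$ the invariant form on the relevant block is positive definite for every choice of $A$, forcing finiteness there. So the shape of the exceptional set already signals that the mechanism behind the even case is different from the odd one.
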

We improve the strategy used in \cite{Kor}, where the result is proved for 
particular families of $p$. The key ingredient is the following lemma due to Coxeter: 

\begin{lemma}\label{coxeter}
A group which admits an irreducible representation on a finite dimensional 
vector space $V$ preserving an indefinite Hermitian form  
should be infinite. 
\end{lemma}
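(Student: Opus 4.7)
The plan is to argue by contradiction: assume the group $G$ is finite, and derive that any $G$-invariant Hermitian form on an irreducible representation must be a scalar multiple of a positive-definite one, hence cannot be indefinite.

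First, I would observe that since $G$ is assumed finite, the standard averaging trick produces a $G$-invariant positive-definite Hermitian form $H'$ on $V$: starting from any positive-definite Hermitian form $K$ on $V$, set
\[ H'(v,w) := \frac{1}{|G|}\sum_{g\in G} K(gv,gw). \]
This gives a second $G$-invariant Hermitian form on $V$ alongside the given indefinite form $H$.

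Next, I would compare the two forms. Since $H'$ is nondegenerate, there exists a unique $\C$-linear endomorphism $A \in \mathrm{End}(V)$ such that $H(v,w) = H'(Av,w)$ for all $v,w \in V$. The Hermitian symmetry of both $H$ and $H'$ forces $A$ to be self-adjoint with respect to $H'$, and the $G$-invariance of both forms forces $A$ to commute with the action of $G$. Since the representation is irreducible and $V$ is finite dimensional over $\C$, Schur's lemma gives $A = \lambda \cdot \mathrm{Id}_V$ for some scalar $\lambda \in \C$; self-adjointness of $A$ with respect to $H'$ then forces $\lambda \in \R$.

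Finally, I would conclude: we have $H = \lambda H'$ with $\lambda \in \R$ and $H'$ positive-definite, so $H$ is either positive-definite, negative-definite, or zero. None of these are indefinite, contradicting the hypothesis. Therefore $G$ must be infinite. No step looks genuinely hard; the only place to be careful is ensuring that the base field is $\C$ so that Schur's lemma yields a scalar endomorphism (an analogous real version would only give $A$ in a division algebra, but the statement is about Hermitian forms and hence implicitly about complex vector spaces).
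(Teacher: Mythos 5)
Your proof is correct: the averaging argument produces an invariant positive-definite form $H'$, and the Schur-lemma comparison $H=\lambda H'$ with $\lambda\in\R$ indeed rules out an indefinite invariant form for a finite group acting irreducibly; the treatment of sesquilinearity, self-adjointness and reality of $\lambda$ is sound. The paper itself states this lemma without proof, attributing it to Coxeter, and your argument is exactly the standard one that underlies it, so there is nothing to compare beyond noting that you have supplied the omitted details.
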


\begin{proof}[Proof of Lemma \ref{infinitenesseven}]
Recall that for even $p$ the set of colors $\mathcal C_p=\{0,2,4,\ldots,\frac{p-4}{2}\}$. 
The $p$-admissibility conditions implies that the boundary color is even, say $i=2c$. 
Consider $p=4k$ and $i=2k-6$. The space of conformal blocks $W_{1,p, (2k-6)}$ is then 
of dimension $5$, with a basis $u_s, s\in\{0,1,2,3,4\}$ where $u_s$ corresponds to the 
colored tadpole graph whose loop edge is labeled $s+k-3$.
The vectors $u_s$ are eigenvectors for the matrix $t$. After rescaling the 
$B_3$ representation by a factor $(-A)^{-k^2+1}$, the eigenvalues of 
$t$ are respectively  $\lambda_0=-\zeta^4, \lambda_1=\zeta, \lambda_2=1, \lambda_3=-\zeta$ and $\lambda_4=-\zeta^4$, where 
$\zeta=A^{2k+1}$ is also a primitive $2p$-th root of unity. 

The Hermitian form  $\langle\; ,\;\rangle$ invariant by the linear lift of the 
quantum representation was computed in \cite{BHMV} and we have: 
\[ \frac{\langle u_{s+1}, u_{s+1}\rangle}{\langle u_s, u_s\rangle}= \frac{[2k-4+s][s+1]}{[k-1+s][k-2+s]}, \; s\in\{0,1,2,3\}, \]
where the quantum integer $[n]$ is defined as 
\[ [n]=\frac{A^{2n}-A^{-2n}}{A^2-A^{-2}}\]
Let $A=\exp\left(\frac{2\\pi \sqrt{-1} \ell}{2p}\right)$, where $\ell$ is odd.  
By direct computation we have for any $\ell$ that: 
\[ \frac{\langle u_{1}, u_{1}\rangle}{\langle u_0, u_0\rangle} > 0, \frac{\langle u_{4}, u_{4}\rangle}{\langle u_3, u_3\rangle} >0.\]
On the other hand 
\[ \frac{\langle u_{2}, u_{2}\rangle}{\langle u_1, u_1\rangle} = 4\sin\left(\frac{3\pi \ell}{2k}\right) \cos\left(\frac{\pi \ell}{2k}\right)\sin\left(\frac{\pi \ell}{4k}\right),
 \]
 \[ \frac{\langle u_{3}, u_{3}\rangle}{\langle u_2, u_2\rangle} = 2\sin\left(\frac{3\pi \ell}{2k}\right) \cos\left(\frac{\pi \ell}{2k}\right).
 \]
These quantities are both negative 
when $2k >\ell> \frac{4}{3}k$. In this case the Hermitian form has signature 
$(+,+,-,+,+)$. 
Therefore, we can choose the primitive $2p$-th root of unity $A$ such that  the Hermitian form  $\langle\; ,\;\rangle$ is indefinite, when $k\geq 4$.

Suppose from now on that $p$ does not divide $120$. 

If the 5-dimensional representation of $B_3$ on $W_{1,p, (2k-6)}$ is irreducible, then Coxeter's lemma \ref{coxeter}
permits to conclude that its image is infinite. 

Assume that the representation above is not irreducible and let $V\subset W_{1,p, (2k-6)}$
be an invariant subspace, of dimension $r$. By passing to the orthogonal of $V$ if necessary, we can assume that $r\in\{1,2\}$. 
 
The element $(tt^*)^3$ acts as a scalar $\delta$ on 
$W_{1,p, (2k-6)}$, because it is a lift of the Dehn twist along the curve encircling once the puncture (see \cite{GM2}, Cor. 11.10 for its exact value). Since $\det (tt^*)^3=\delta^5$ we derive the following equation: 
\[\delta^5 =(\lambda_0\lambda_1\lambda_2\lambda_3\lambda_4)^6.\]
 
Consider first $r=1$ and let the eigenvalue of $t$ and $t^*$ corresponding to the subspace $V$ be $\lambda_{i}$. Then $(t|_{V}t^*|_{V})^3$ acts as the scalar $\delta$, so that 
\[  \delta =\lambda_i^6.\]
Replacing this value of $\delta$ in the previous equation we obtain the identity:   
\[ (\lambda_0\lambda_1\cdots \lambda_4)^{6}=(\lambda_{i})^{30}\]
This implies $\zeta^{60}=1$, which contradicts our assumptions on $p$. 

Consider now that $r=2$ and let that the eigenvalues of $t$ and $t^*$ corresponding to the subspace $V$ be $\lambda_{i}$ and $\lambda_j$, where $i<j$. 
By computing the determinant of the 2-by-2 scalar matrix $(tt^*)|_V$  we derive that
\[  \delta^2 =(\lambda_i\lambda_j)^6.\]
Replacing this value of $\delta$ in the first equation above we obtain the identity: 
\[ (\lambda_0\lambda_1\cdots \lambda_4)^{12}=(\lambda_{i}\lambda_j)^{30}\]
This relation implies that either $\zeta^{120}=1$, or else 
$(i,j)\in\{(0,2), (2,4), (1,3)\}$. 

When $(i,j)=(0,2)$, the restriction of the representation of $B_3$ to $V=\C u_0\oplus \C u_2$ is irreducible and the restriction of the Hermitian form  $\langle\; ,\;\rangle$  to $V$ is indefinite, for a suitable choice of the root $A$. 
When $(i,j)=(2,4)$, the situation is symmetric.  

Eventually, if  $(i,j)=(1,3)$, the restriction of the representation of $B_3$ to $V^{\bot}=\C u_0\oplus \C u_2\oplus \C u_4$ is irreducible and the restriction of the Hermitian form  $\langle\; ,\;\rangle$  to $V^{\bot}$ is again indefinite, for a suitable choice of the root $A$. 

In all cases above the image of the representation should be infinite, again by the Coxeter 
lemma \ref{coxeter}.  
\end{proof}

\begin{proof}[End of proof of Proposition \ref{infiniteness}]
When $p\geq 7$ is odd or $p\equiv 2 \; ({\rm mod}\; 4)$
we use lemma \ref{infinitenessodd}. If $4$ divides $p$ then 
the cases excluded by both lemmas \ref{infinitenessodd} and  \ref{infinitenesseven} are those from the statement. 
\end{proof}

\begin{rem}
Lemma \ref{infinitenessodd} cannot be extended to $p=5$. Indeed 
$\rho_{1,5,(i)}(\Mod(\Sigma_1^1))$ is finite both when $i=0$ and $i=2$.
However $\rho_{g,5}(\Mod(\Sigma_g))$ is infinite for all $g\geq 2$. 
The kernel of $\rho_{g, 5}$ therefore provides an infinite index subgroup 
of $\Mod_g$ whose intersection with any subgroup $\Mod(\Sigma_{1,1})$ associated to a subsurface 
 $\Sigma_{1,1}$ of $\Sigma_g$ is of finite index. 
\end{rem}

\begin{rem}
For $p=10$, the image $\rho_{1,10,(i)}(\Mod(\Sigma_1^1))$ is also finite, as 
$\rho_{2,10}(\Mod(\Sigma_2))$ is finite (see \cite{F}). Note that 
$\rho_{g,10}(\Mod(\Sigma_g))$ is infinite, when $g\geq 3$. 
\end{rem}

\begin{rem}
For $p=6$ and $p=8$  the corresponding representations $\rho_{1,p,(i)}(\Mod_1^1)$ are also finite, as $\rho_{2,p,(i)}(\Mod_2)$ are known to be finite.  
For $p\in\{12, 20, 24\}$, we know that 
$\rho_{2,p}(\Mod_g)$ are also infinite for $g\geq 2$ while  
$\rho_{1,5,(i)}(\Mod_1^1)$ is finite, whenever $i\leq p-6$, as the Hermitian form $\langle \; ,\;\rangle$ is positive definite for all values of $A$. 
\end{rem}

\begin{rem}
When $p\equiv 0\; ({\rm mod}\; 4)$  
the image $\rho_{1,p,(p-4)}(\PMod(\Sigma_{1}^{1}))$ of the 3-dimensional representation of $B_3$  is finite. 
Indeed the representation is actually integral for all $p$ (see \cite{GM2}) 
and the Hermitian form $\langle \; ,\;\rangle$ is positive definite for all values of $A$, as it was noted in \cite{Kor}. 
\end{rem}

We further need the following result which is taking care of the images of geometric subgroups of mapping class groups: 
\begin{prop}
Let $g,n$ with $2g-2+n>0$, $n\geq 1$ and $(g,n)\neq (1,1)$. Then the image $\rho_{g,p}(K_i)$ of a geometric surface subgroup is infinite, when $p\not\in\{1,2,3,4,5,6,8,10,12,20,24\}$. 
\end{prop}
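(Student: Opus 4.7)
The plan is to deduce the proposition from the $(0,4)$-case of Proposition \ref{infiniteness} via a subsurface embedding argument coupled with the TQFT gluing (Verlinde) axiom.

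First I would handle the base case $(g,n) = (0,4)$ directly. The pure mapping class group $\PMod(\Sigma_0^3)$ of the thrice-punctured sphere is trivial, so Birman's exact sequence identifies $K_i$ with all of $\PMod(\Sigma_0^4)\cong F_2$. The conclusion then follows immediately from Proposition \ref{infiniteness}, which in the $(0,4)$ case is treated in \cite{F, Mas}.

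For a general $(g,n)$ in the hypotheses, I would construct an embedded subsurface $\Sigma' \subset \Sigma_g^n$ homeomorphic to a four-holed sphere, with $p_i$ as one of its four holes (the remaining three being either punctures of $\Sigma_g^n$ or boundary circles of $\Sigma'$). Such an embedding is available in each subcase: for $g=0$ with $n\geq 5$, take a small disk containing $p_i$ and three other punctures; for $g\geq 1,\ n\geq 2$ (with $(g,n)\neq (1,1)$), cut along a non-separating simple closed curve disjoint from the punctures to produce a genus-zero piece containing $p_i$ and one other puncture; for $g\geq 2,\ n=1$, use a partial pants decomposition to isolate a $\Sigma_0^4$-component around $p_i$, the three boundary circles bounding the extra genus. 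In each configuration, the inclusion $\PMod(\Sigma',\partial\Sigma') \hookrightarrow \PMod(\Sigma_g^n)$ carries the point-pushing subgroup of $p_i$ inside $\Sigma'$---which equals the entire $\PMod(\Sigma_0^4)\cong F_2$---into $K_i$. The TQFT gluing axiom then decomposes the restriction of $\rho_{g,p}$ to $\PMod(\Sigma',\partial\Sigma')$ as a direct sum of $(0,4)$-quantum representations $\rho_{0,p,(\mathbf{c})}$ ranging over admissible colorings $\mathbf{c}$ of the four holes, and the base case yields infinite image on the point-pushing subgroup.

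The main obstacle is ensuring that a summand witnessing the $(0,4)$-infiniteness of \cite{F, Mas} actually appears in the TQFT decomposition for the chosen subsurface. Certain embeddings impose constraints on the admissible boundary colorings: for instance, the non-separating cut forced in the $(g,n)=(1,2)$ subcase requires the two cut-boundary circles to carry the same TQFT color, and a complement of very limited topology may restrict which colorings occur with nonzero multiplicity through the Verlinde formula. The key technical verification is therefore that the colorings used in \cite{F, Mas} to exhibit infinite image on $\PMod(\Sigma_0^4)$ remain compatible with these constraints---equivalently, that the $(0,4)$-infiniteness persists at some coloring of the diagonal or symmetric type dictated by the gluing geometry. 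I would address this by selecting, in each subcase, a subsurface whose complement is topologically rich enough for the Verlinde multiplicity to be positive on a suitable symmetric coloring, and by verifying that the explicit infinite-image elements in \cite{F, Mas} can be realized at such a coloring.
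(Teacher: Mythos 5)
Your overall strategy (isolate the puncture $p_i$ in a small subsurface, use the fusion/gluing rules to decompose the restriction of $\rho_{g,p}$ into colored genus-zero blocks, and invoke a low-complexity infiniteness result) is the same as the paper's, which embeds an essential $1$-punctured pair of pants around $p_i$ and restricts $\rho_{g,p}$ to the corresponding point-pushing free group. But your write-up stops exactly at the step that carries the mathematical content. The paper closes that step by citing a statement tailored to this situation, namely Prop.~3.2 of \cite{LF}: the image of the fundamental group of the pair of pants (i.e.\ of the point-pushing $F_2$ itself, with whatever boundary colorings the embedding forces) under some block of the quantum representation of the ambient surface is infinite non-abelian for odd $p$, with the same argument working for even $p$ in the stated range. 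You instead propose to quote the $(0,4)$ results of \cite{F,Mas} / Proposition \ref{infiniteness} and then ``verify'' that a coloring witnessing infiniteness survives the constraints imposed by the embedding (equal colors on the two sides of a non-separating cut, positivity of the relevant Verlinde multiplicities, etc.). That verification is announced but never carried out, and it is precisely the point of the proposition; as written, the argument is a plan rather than a proof.

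There is also a genuine imprecision in the reduction itself. When the three remaining holes of your four-holed sphere $\Sigma'$ are boundary circles of a subsurface (rather than punctures of $\Sigma_g^n$), the point-pushing subgroup of $p_i$ in $\Sigma'$ is \emph{not} all of $\PMod(\Sigma_0^4)$: the Birman sequence for a sphere with one puncture and three boundary components has $\PMod$ of the pair of pants (generated by boundary twists) as quotient, so the point-pushing $F_2$ is a proper subgroup, and the infinite-image statements of \cite{F,Mas}, which concern the full $(0,4)$ mapping class group at specific colorings, do not directly give infiniteness of the image of that subgroup. One must either argue that boundary twists act by scalars in each colored block and that the explicit infinite-order elements can be taken inside the point-pushing subgroup, or, as the paper does, cite a result stated for the pair-of-pants fundamental group itself. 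Filling in these two points (compatible colorings with positive multiplicity, and infiniteness on the point-pushing subgroup rather than on the ambient $(0,4)$ group) would turn your outline into a complete proof essentially equivalent to the paper's.
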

\begin{proof}
In the situation at hand there exists some  1-punctured pair of pants  embedded in $\Sigma_g^n$ which is essential, namely the homomorphisms between fundamental groups is injective. 
It is known that the image of the fundamental group of the pair of pants by some quantum representation of 
$\Sigma_g^n$ is infinite non-abelian. This is shown in the proof of Prop. 3.2 in \cite{LF} for odd $p$, and the same arguments work for even $p$ in the given range.  This implies that the image of the fundamental group of 
$\Sigma_g^{n-1}$ by the same representation is infinite as well, as claimed.  
\end{proof}

Note that for $g\geq 2$ the mapping class group representations $\rho_{g,p}$ are projective and cannot be linearized.
It is well-known that $\rho_{g,p}$ lift to linear representations of a central extension of the mapping class group 
by some finite cyclic group depending on $p$.  By composing the adjoint representation
of the projective linear group with $\rho_{g,p}$ we obtain linear representations 
which have infinite images precisely when $\rho_{g,p}$ have infinite images. 

Eventually recall that $\rho_{g,p}$ factors through $\pi_1(\overline{\mathcal{M}_{g}^n}[p])$ for odd 
$p$ and $\pi_1(\overline{\mathcal{M}_{g}^n}[16p,2p])$ for even $p$, respectively. Indeed, it is known that the orders of the images of Dehn twists divide $2p$ for even $p$, see \cite{mcgorb} for a precise computation of the orders. 

Therefore the adjoint linear representations of $\rho_{g,p}$ satisfy the hypothesis of Proposition  \ref{theointro2}. 
This completes the proof of Theorem \ref{corointro1}.

\section{Proof of Theorem \ref{theointro1}\label{pftheointro1}}

\subsection{Idea of the proof and evidence from the F-Conjecture}

This conjecture  \cite{GKM} claims that a $\Q$-divisor on $\overline{M_{g}^n}$ is ample if and only if $D. \bar \Gamma>0$ for every $\Gamma\subset  \overline{M_{g}^n}$ a one-dimensional stratum 
of the Deligne-Mumford stratification of $\partial \overline{M_{g}^n}$. The Deligne-Mumford stratification is actually the stratification by the topological type of the stable curve.  

We denote by $X_p$ a complex projective manifold which is a finite Galois covering space of $\overline{M_{g}^n}$ in such a way that
if $p\geq 5$ is odd, $\mathcal{X}_p={\overline{\mathcal{M}_{g}^{n \ an}}}[p]$  
(resp. if $p\ge 12$ is even and $(g,n,p)\neq (2,0,12)$, $\mathcal{X}_p=\overline{{\mathcal M}_g^{n \; an}}[2p,  p/{\rm g.c.d.}(p,4)]$) is a quotient stack of $X_p$ by a finite group action.

For a smooth projective variety the sets 
of points of the moduli space of linear representations of its fundamental 
group, the moduli of flat semi-stable vector bundles and 
the moduli space of semi-stable Higgs bundles with trivial Chern classes are identified by the Riemann-Hilbert and Hitchin-Simpson correspondences, respectively. Moreover, the so-called Simpson ubiquity theorem (see \cite{Sim1}, \cite[Corollary 4.2, p.45]{Sim}) shows that fixed points for the $\C^*$ rescaling action on the moduli space of Higgs bundles correspond to the isomorphism classes of the holonomy representations of complex polarized variations of Hodge structures of weight zero. In particular, every semi-simple linear representation  of the fundamental group can be deformed to a complex variation of Hodge structures.

Constructible subsets of algebraic varieties are finite unions of locally closed sets. A subset of the moduli space of representation is {\em absolutely constructible} if it is identified with constructible subsets in the two other moduli spaces, equivariant with respect to the action of the Galois group $G_{\Q}$ over $\Q$ and $\C^*$-invariant in the Higgs bundle case, see \cite[section 6]{Sim2}, \cite[section 5.2]{E6}.

We denote by $R_p$ the absolute constructible subset on $X_p$ generated by (i.e. the smallest such set containing) the Galois conjugates of the pull back of the Reshetikhin-Turaev representations $\rho_{g, p, (\mathbf{i})}$ where $(\mathbf{i})=(i_1, \ldots, i_n) \in \mathcal{C}_p^n$
is a coloring of the $n$ punctures.

We look at its Shafarevich morphism $sh_{R_p}:X_p \to sh_{R_p}(X_p)$.

\begin{lem} \label{critstein} $sh_{R_p}$ is a birational contraction.  If  $sh_{R_p}=\mathrm{id}_{X_p}$,  then the universal covering space of $X_p$   is a Stein manifold.  
 \end{lem}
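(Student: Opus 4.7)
The plan is to deduce both assertions by combining Proposition \ref{theointro2} with the basic properties of Shafarevich morphisms associated to absolute constructible sets, as developed in \cite{E6}.

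For the first assertion, I will exploit the tautological observation that enlarging an absolute constructible set can only shrink the set of contracted subvarieties. More precisely, by the defining property of $sh_R$, a positive-dimensional subvariety $Z \subseteq X_p$ is contracted by $sh_{R_p}$ if and only if $\rho(\pi_1(Z))$ is finite for every $\rho \in R_p$; hence the exceptional locus of $sh_{R_p}$ is contained in the exceptional locus of $sh_{\rho}$ for any single $\rho \in R_p$, and there is a natural factorization $sh_\rho = \pi_\rho \circ sh_{R_p}$. Taking $\rho$ to be (the adjoint linearization of) the pullback to $X_p$ of a Reshetikhin-Turaev representation $\rho_{g,p,(\mathbf{i})}$, Proposition \ref{theointro2} applied on the coarse moduli space $\overline{M_g^n}$ (combined with finiteness of $X_p \to \overline{M_g^n}$) guarantees that $sh_\rho$ is a birational morphism whose exceptional locus lies in the preimage of $\partial \overline{M_g^n}$. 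This yields the same conclusion for $sh_{R_p}$.

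For the Stein statement, I will invoke the general principle underlying the construction of Shafarevich morphisms in \cite{E6}: for any smooth complex projective manifold $X$ and any absolute constructible set $R$, the morphism $sh_R$ is built so that the universal cover $\widetilde{X}$ is holomorphically convex and its Cartan--Remmert reduction coincides with the natural map $\widetilde{X} \to \widetilde{sh_R(X)}$ onto a Stein manifold. Under the hypothesis $sh_{R_p} = \mathrm{id}_{X_p}$, the Shafarevich base simply equals $X_p$ itself, so the Cartan--Remmert reduction is the identity and $\widetilde{X_p}$ is Stein.

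I expect the main obstacle to be not the birationality itself, which is essentially formal, but rather checking carefully that the framework of absolute constructible sets from \cite{E6} applies to $R_p$ on the manifold $X_p$ (rather than directly on the stack $\mathcal{X}_p$), i.e.\ that Galois conjugates of the pullbacks of the $\rho_{g,p,(\mathbf{i})}$ indeed generate an absolute constructible subset of the representation variety of $\pi_1(X_p)$ to which both the existence of $sh_{R_p}$ and the Stein property of the universal cover of its target apply. Once this is in place, the two assertions reduce respectively to Proposition \ref{theointro2} and to the known Stein property of universal covers of Shafarevich bases.
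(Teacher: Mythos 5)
Your treatment of the first assertion is essentially the paper's: birationality comes from the GKM-based result (Proposition \ref{gibney}, equivalently Proposition \ref{theointro2} applied to a single Reshetikhin--Turaev representation), combined with the monotonicity/factorization $sh_{\rho}=\pi_{\rho}\circ sh_{R_p}$, which is exactly the remark following the lemma in the paper; the only thing left implicit on your side is that the chosen $\rho$ satisfies the hypotheses of Proposition \ref{theointro2} (infinite image, infinite restrictions to the geometric surface subgroups), which Section 3 supplies for the admissible values of $p$.

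The second assertion is where your proposal has a genuine gap. The ``general principle'' you invoke --- that for any smooth projective $X$ and any absolute constructible set $R$ the \emph{universal} cover $\widetilde X$ is holomorphically convex with Cartan--Remmert reduction $\widetilde X\to\widetilde{sh_R(X)}$ --- is not what \cite{E6} proves, and it is false in general: the theorem of \cite{E6} concerns the covering of $X$ attached to $R$, i.e.\ the intermediate cover corresponding to $\bigcap_{\rho\in R}\ker\rho$, which is in general a proper quotient of the universal cover. Holomorphic convexity of the universal cover itself is an instance of the (open) Shafarevich conjecture and cannot be assumed; indeed, taking $R$ generated by the trivial representation, $sh_R(X)$ is a point while the universal cover of a curve of genus at least $2$ is the disc, whose Remmert reduction is itself, not a point. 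What \cite{E6} does give is that, when $sh_{R_p}=\mathrm{id}_{X_p}$, the $R_p$-cover of $X_p$ is Stein (holomorphically convex with no positive-dimensional compact subvarieties). The missing step, and the one the paper uses, is the classical theorem of Stein that a covering space of a Stein manifold is Stein: since the universal cover of $X_p$ (which coincides with that of $\mathcal X_p$) covers the $R_p$-cover, it is Stein. With that bridge inserted in place of your ``general principle'', your argument becomes correct and coincides with the paper's.
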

\begin{proof} The first statement follows from Proposition \ref{gibney}.  If $sh_{R_p}$ is the identity, 
 it follows from \cite{E6} that the covering space of $X_p$ attached to $R_p$  is a Stein manifold. Hence its universal covering space, which 
 coincides with that of $\mathcal{X}_p$, is Stein  since the universal covering space of a Stein manifold is Stein. 
\end{proof}

\begin{rem}
 Since the  $\rho_{g,  p, (\mathbf{i})}$ are in $R_p$ it is enough to show that $sh_{\rho_{g,p}}=\mathrm{id}_{X_p}$ since we have a factorisation $sh_{\rho_{g,p}}:X_p\buildrel{sh_{R_p}}\over{ \longrightarrow} sh_{R_p}(X_p) \to sh_{\rho_{g,p}}(X_p)$. 
\end{rem}

\begin{rem}\label{motivicity}
It is tempting  to conjecture that the absolute constructible set generated by the Reshetikhin-Turaev representations is discrete and that:
\[R_p=\{ \rho_{g, p,(\mathbf{i})}^{\sigma}, \quad (\mathbf{i} ) \in \mathcal{C}_p^n, \ \sigma\in G_{\Q} \}\]
If it were true, then Simpson's ubiquity theorem would imply that  the Galois  conjugates of the Reshetikhin-Turaev representations are complex variations of Hodge structures. This would also follow (see \cite[Theorem 5, p.59]{Sim})  if we knew that the Reshetikhin-Turaev representations $\rho_{g,p}$ were locally rigid. At present we only know that the local rigidity holds when $p=5$ and $g\geq 3$ or $p\geq 7$ is prime and $g\geq 7$ (see \cite{Go1,Go4}).
\end{rem}

\begin{rem}\label{cvhs}
Godfard (\cite{Go3}) recently proved that the semi-simple Reshetikhin-Turaev representations, in particular $\rho_{g,p}$, are complex variations of Hodge structures.  From the results of \cite{E6} it follows that $sh_{\rho_{g,p}}$ is  the Stein factorisation of a Griffiths' period mapping, whose monodromy is integral (see \cite{GM}). Theorem \ref{theointro1} suggests they may satisfy an infinitesimal Torelli theorem along each stratum.
\end{rem} 

Let $Y$ be a compact K\"ahler manifold, $\rho:\pi_1(Y)\to GL(N,\C)$  a semisimple
representation and $\theta$ its Higgs field (see \cite{Cor,Sim}). Then the 1-form $\omega_{\rho}={\rm tr}(\theta\wedge \theta^*)$ is semi-positive. We have the following 
result which appears in (\cite{E6}, see section 3.3.1 and Prop 3.3.1): 

\begin{lem}\label{oneform}
For every holomorphic map $f:Z\to Y$, where $Z$ is a complete K\"ahler manifold and $Y$ a 
compact K\"ahler manifold, the condition $f^*\omega_{\rho}\neq 0$ implies that $\rho(f_*(\pi_1(Z))$ is infinite. 
\end{lem}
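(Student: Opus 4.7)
The plan is to give $\omega_\rho$ a geometric harmonic-map interpretation and then deduce the conclusion from an analytic argument of Liouville type. By Corlette's theorem applied to the semi-simple $\rho$ on the compact K\"ahler manifold $Y$, there exists a $\rho$-equivariant pluriharmonic map $\tilde u : \tilde Y \to X := GL(N, \C)/U(N)$ into the Hadamard symmetric space, equipped with its invariant metric $g_X$; with this identification $\omega_\rho$ coincides, up to a positive universal constant, with the descent to $Y$ of the $(1,1)$-part of the Riemannian pullback $\tilde u^* g_X$. Picking a lift $\tilde f : \tilde Z \to \tilde Y$ of $f$, I would observe that $v := \tilde u \circ \tilde f$ is pluriharmonic (pluriharmonicity is preserved by pre-composition with holomorphic maps) and $(\rho \circ f_*)$-equivariant; therefore $f^* \omega_\rho$ is exactly the descent to $Z$ of the $(1,1)$-part of $v^* g_X$, up to the same positive constant.

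To prove the contrapositive, I would then assume that $G := \rho(f_* \pi_1(Z)) \subset GL(N,\C)$ is finite. Averaging a positive-definite Hermitian form under $G$ puts $G$ (up to conjugation in $GL(N,\C)$) inside $U(N)$, and by Cartan's fixed-point theorem on the Hadamard manifold $X$ there is a point $x_0 \in X$ with $G \cdot x_0 = x_0$. Setting $\psi(\tilde z) := d_X(v(\tilde z), x_0)^2$, pluriharmonicity of $v$ combined with the smooth convexity of $d_X(\cdot, x_0)^2$ on the Hadamard manifold gives that $\psi$ is plurisubharmonic on $\tilde Z$; the $G$-invariance of $x_0$ and the $(\rho \circ f_*)$-equivariance of $v$ then make $\psi$ into a $\pi_1(Z)$-invariant function, so it descends to a plurisubharmonic function $\bar\psi : Z \to \R_{\geq 0}$.

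The hard part will be the final analytic step: one must deduce from the plurisubharmonicity of $\bar\psi$ on the complete K\"ahler manifold $Z$ that $v$ is actually constant, which then forces $v^* g_X \equiv 0$ and hence $f^* \omega_\rho = 0$. When $Z$ is compact K\"ahler this is immediate via the maximum principle, and letting $x_0$ range over the fixed-point locus $X^G$ rigidifies $v$ to a constant map. In the general complete-K\"ahler setting, however, the conclusion requires the $L^2$-Bochner identity of Sampson together with the specific Higgs-bundle structure of $\rho$, as developed in \cite[Section 3.3.1]{E6}; it is here that the completeness hypothesis on the K\"ahler metric of $Z$ plays its essential role, and this constitutes the technical heart of the statement.
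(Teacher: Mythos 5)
The paper itself gives no proof of this lemma: it is imported verbatim from \cite{E6} (Section 3.3.1 and Prop.\ 3.3.1), so what you are reconstructing is Eyssidieux's argument. Your reconstruction is the standard one and it does close when $Z$ is \emph{compact}: with $v=\tilde u\circ\tilde f$ pluriharmonic, $G=\rho(f_*\pi_1(Z))$ finite and $x_0$ a $G$-fixed point in $X=GL(N,\C)/U(N)$, nonpositive curvature gives $dd^c\, d_X(v(\cdot),x_0)^2\geq c\cdot(\text{the }(1,1)\text{-part of }v^*g_X)\geq 0$ for some $c>0$, so once the descended function $\bar\psi$ is constant by the maximum principle you get $f^*\omega_\rho=0$ directly --- you do not even need the intermediate claim that $v$ is constant. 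Your identification of $\omega_\rho$ with the $(1,1)$-part of the harmonic-map pullback metric, the Cartan fixed-point step, and the plurisubharmonicity of $d_X(v,x_0)^2$ are all fine.

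The genuine gap is the final step, which you rightly flag as unfinished but misdiagnose as merely technical. For a general complete noncompact K\"ahler $Z$ and a merely holomorphic $f$, no Liouville-type or $L^2$-Bochner argument can rescue it, because the statement in that literal generality is false: take $Y$ an abelian variety, $\rho\colon\pi_1(Y)\to\C^*$ a non-unitary character (semi-simple, whose Higgs field is a nonzero translation-invariant holomorphic $1$-form $\alpha$, so that $\omega_\rho$ is a nowhere-zero semipositive $(1,1)$-form), $Z=\C$ with its flat metric, and $f\colon\C\to Y$ a linear entire curve with $\alpha(f'(0))\neq 0$. Then $f^*\omega_\rho\neq 0$ while $\pi_1(Z)=1$, so $\rho(f_*\pi_1(Z))$ is trivial; in your scheme this just means $\bar\psi=d_X(v,x_0)^2$ is a nonconstant plurisubharmonic function on $\C$, which is no contradiction. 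So the conclusion genuinely needs either compactness of $Z$ (which is all the present paper actually uses, the relevant $Z$ being projective curves and closed strata) or the additional hypotheses under which the result is stated in \cite{E6}, which are not captured by completeness of the K\"ahler metric alone. Your write-up should therefore either assume $Z$ compact, or state and import those hypotheses explicitly, rather than defer the noncompact case to an analytic argument that cannot exist at this level of generality.
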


The preimage of a one dimension stratum $\Gamma$ in $X_p$ is  a finite disjoint  union of  smooth curves $C^a_p(\Gamma) \subset X_p$. As such it defines a family of stable curves $C^a_p(\Gamma) \to \Mgnbar$. 
The general fiber is either: 
\begin{enumerate}
 \item  A stable curve all of whose components but one are rational curves with $3$ nodes or punctures and the remaining one is a rational curve with $l$ nodes and $k$ punctures with $2l+k=4$.
 \item  A stable curve all of whose components but one are rational curves with $3$ nodes or punctures and the remaining one is an elliptic curve with $1$  node (except if $g=1, n=1$ in which case $\overline{M^1_1}$ is its unique one-dimensional stratum).
\end{enumerate}

The second case is called a family of elliptic tails. 

As we will see, fusion rules in TQFT imply that $\rho_{g,p}$ restricted to 
the fundamental group of a stratum is essentially $\rho_{g,p}$ 
of the corresponding punctured surface describing that stratum. 
The $F$-conjecture states that the cone generated by the algebraic classes in $H_2(\overline{M_{g}^n};\R)$ is a polyhedral cone generated by the one-dimensional strata. 
If the F-conjecture holds, Lemma \ref{fonctoshaf} along with Lemma \ref{oneform} would reduce our claim to the critical cases $(g,n)=(0,4), (1,1)$.

\subsection{Fusion rules} 

Let  $\Mod(S)$ be the mapping class group of the compact 
connected surface $S$, possibly with boundary components and punctures. 
If $\Sigma$ has several connected components 
$S_1,\ldots,S_k$ then  $\Mod(S)$ states for the direct product 
$\prod_{i=1}^k\Mod(S_i)$. Furthermore we denote by 
$\PMod(S)$  the {\em pure} mapping class group consisting of the isotopy classes which fix {\em pointwise} the punctures and the boundary components. 

We denote by $\Sigma_{g,b}^n$ the 
genus $g$ orientable surface with $n$ punctures (or marked points) and $b$ boundary components.  We follow the standard convention to omit $b$ or $n$ when they are equal to 0. 

Let $S\subset \Sigma$ be a subsurface that need not be connected  but has finitely many components and no marked points on the boundary components. Then  we have a natural morphism of groups:

\[ \iota_S\buildrel{{\rm not.}}\over{=}\iota_{S\subset \Sigma}: \PMod(S) \to \PMod(\Sigma) 
\]
which associates to a mapping class on $S$ its extension to $\Sigma$ by the 
identity. 
 
 The Reshetikhin-Turaev representation $\bar \rho_{g+b}: \PMod(\Sigma_{g+b}) \to PU $ has a projective ambiguity that can be removed by passing to a central extension:
 \[ 1\to \Z \to \widetilde{\PMod(\Sigma_{g+b})} \to \PMod(\Sigma_{g+b}) \to 1
 \]
 whose class in $H^2(\PMod(\Sigma_{g+p}), \Z)$ is 
$12$ times the generator of $H^2(\PMod(\Sigma_{g+p}), \Z)\cong \Z$, when 
$g+p\geq 4$ (see \cite{MR}). Thus the class in  $H^2(\PMod(\Sigma_{g+p}), \Q)\cong H^2(M_{g+p}, \Q)$ is $12\lambda$, where $\lambda$ is the first Chern class of the Hodge bundle. 
The later also holds for $g+p=3$, although  $H^2(\PMod(\Sigma_{3}), \Z)\cong \Z\oplus \Z/2\Z$.

We may lift this extension under the natural map $\iota_{\Sigma_{g,b} \subset \Sigma_{g+b}}: \PMod(\Sigma_{g,b}) \to \PMod(\Sigma_{g+b})$ obtained by gluing a $\Sigma_{1,1}$ along each boundary component. This kills the
projective ambiguity of the Reshetikhin-Turaev representation of $\Sigma_{g,b}$, i.e. we obtain a central extension 

\[ 1\to \Z \to \widetilde{\PMod(\Sigma_{g,b})} \to \PMod(\Sigma_{g,b}) \to 1
\]
and a linear representation $\widetilde{\rho_{g,b,p, , (\mathbf{i})}}: \widetilde{\PMod(\Sigma_{g,b})} \to U(W_{g,p,(\mathbf{i})})$ 
where $(\mathbf{i})$ is a coloring of the boundary components and $W_{g,p,(\mathbf{i})}$ is the space of conformal blocks. 
 
If $S\subset \Sigma^n_{g,b}$ is connected we denote by $\widetilde{\PMod(S)}$ the pullback of the extension along $\iota_S$ and, in the non connected case, we 
 denote by $\widetilde{\PMod(S)}$ the product $\displaystyle{\prod_{k\in\pi_0(S)}} \widetilde{\PMod(S_k)}$. 
 
 This gives natural morphisms 
 
 \[ \widetilde{\iota_S}: \widetilde{\PMod(S)} \to \widetilde{\PMod(\Sigma_{g,b})}. 
\]

One has a natural central extension:

\[
1 \to \Z^n \to \PMod(\Sigma_{g,n}) \to \PMod(\Sigma_g^n) \to 1
\]
and the projective ambiguity of $\rho_{g,p, (\mathbf{i})}$ is resolved by $\widetilde{\rho_{g,p, (\mathbf{i})}}$. 
Then on the natural basis of  $\ker(\widetilde{\PMod(\Sigma_{g,n})} \to \PMod(\Sigma_g^n) ) \simeq \Z^{n+1}$ the representation $\rho_{g,p, (\mathbf{i})}$ acts 
by scalars which are $2p$-th roots of unity and
 $ \rho_{g,p,(\mathbf{i})}$ has a finite image if and only if  $ \widetilde{\rho_{g,p,(\mathbf{i})}}$ does. 
 
The most remarkable property of the Reshetikhin-Turaev representation is the following statement, see \cite{BHMV}: 

\begin{prop}\label{fus} If $S$ is obtained by cutting disjoint annuli surrounding a disjoint collection of $m$ pairwise non isotopic essential simple curves $\underline{c}$

\[ \widetilde{\iota_S}^* \widetilde{\rho_{g,p,(\mathbf{i})}} =\bigoplus_{\mathbf{j}\in \mathcal{C}_p^m} \widetilde{\rho_{S,(\mathbf{i}\amalg \mathbf{j} )}}
\]
 where $\mathbf{i}\amalg \mathbf{j}$ is the coloring of $S$ obtained by keeping $\mathbf{i}$ and coloring the $2m$ boundary components of $S$  
 in such a way that the two boundary components corresponding to the curve $c$  receive the color $j_{c}$. 
\end{prop}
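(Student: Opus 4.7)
The plan is to invoke the gluing (or fusion) axiom of the BHMV modular functor iteratively, once for each component of the multicurve $\underline{c}=c_1\cup\cdots\cup c_m$, and then verify that the resulting decomposition is equivariant under the pullback of $\widetilde{\rho_{g,p,(\mathbf{i})}}$ along $\widetilde{\iota_S}$.

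First I would recall that $W_{g,p,(\mathbf{i})}$ is, by its definition in \cite{BHMV}, the space assigned by the Reshetikhin-Turaev TQFT to the marked surface $(\Sigma_{g,b}^n,(\mathbf{i}))$, and that this functor satisfies the gluing axiom: for any single essential simple closed curve $c\subset \Sigma_{g,b}^n$, if $\Sigma'$ denotes the (possibly disconnected) surface obtained by cutting along $c$ — which acquires two new boundary components — there is a canonical isomorphism
\[ W_{\Sigma_{g,b}^n,(\mathbf{i})} \;\simeq\; \bigoplus_{j\in \mathcal{C}_p} W_{\Sigma',(\mathbf{i},j,j)}, \]
where the summand indexed by $j$ assigns the color $j$ to each of the two new boundary components. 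Iterating this axiom along $c_1,\ldots,c_m$, which are disjoint and pairwise non-isotopic, yields the claimed decomposition of vector spaces
\[ W_{g,p,(\mathbf{i})} \;\simeq\; \bigoplus_{\mathbf{j}\in \mathcal{C}_p^m} W_{S,(\mathbf{i}\amalg \mathbf{j})}. \]

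The essential second step is equivariance. Using the TQFT axioms one sees that on the $j$-summand associated to a curve $c_\ell$, the Dehn twist $T_{c_\ell}$, suitably lifted to the central extension, acts by a definite scalar depending only on $j$ and $p$ (the usual $(-1)^j A^{j(j+2)}$). The summands of the decomposition above are therefore exactly the common simultaneous eigenspaces of the commuting family $\widetilde{T_{c_1}},\ldots,\widetilde{T_{c_m}}$ in $\widetilde{\PMod(\Sigma_{g,b}^n)}$. Because elements of $\widetilde{\iota_S}(\widetilde{\PMod(S)})$ are represented by diffeomorphisms supported away from $\underline{c}$, they commute with each $\widetilde{T_{c_\ell}}$, and so preserve each summand. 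Inspection of the gluing isomorphism, applied in the bordism picture of BHMV, then identifies the action of $\widetilde{\iota_S}$ on the $(\mathbf{i}\amalg\mathbf{j})$-summand with the representation $\widetilde{\rho_{S,(\mathbf{i}\amalg\mathbf{j})}}$ of $\widetilde{\PMod(S)}$.

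The main technical obstacle I anticipate is the bookkeeping of the $\Z$-central extensions. The extension $\widetilde{\PMod(S)}$ used in the statement was defined by pulling back $\widetilde{\PMod(\Sigma_{g+b})}$ along the gluing of a copy of $\Sigma_{1,1}$ on each boundary component of $S$, whereas the decomposition produced by the gluing axiom naturally lives on the extension obtained by viewing $S$ as a bordism, which is controlled by a Maslov/framing cocycle. One must check that these two central extensions of $\PMod(S)$ coincide (up to a tensor of $U(1)$-characters that can be absorbed into the isomorphism), so that the identification in the previous paragraph is genuinely linear and not merely projective. This compatibility is essentially the content of the cocycle computations in \cite{MR} and of the gluing axiom as formulated in \cite{BHMV}; once it is in hand, combining it with the preceding two paragraphs completes the proof.
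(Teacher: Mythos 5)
The paper offers no argument of its own for Proposition \ref{fus}: it is stated as the fundamental fusion/gluing property of the Reshetikhin--Turaev theory with a citation to \cite{BHMV}, and your sketch --- iterated gluing axiom, equivariance for mapping classes supported in $S$, and the central-extension bookkeeping via \cite{MR} --- is exactly the argument that citation stands for, so your proposal is correct in outline and takes essentially the same route the paper relies on. One small caution: distinct colors $j$ can in principle share the twist eigenvalue $(-1)^{j}A^{j(j+2)}$, so the summands are better seen to be preserved by $\widetilde{\iota_S}(\widetilde{\PMod(S)})$ through the naturality of the gluing isomorphism (as in \cite{BHMV}) rather than purely as simultaneous eigenspaces of the lifted twists.
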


\subsection{Analysis of the boundary}
Denote also by $\mathcal M(S)$ the moduli stack of curves homeomorphic to 
$S$ and $\overline{\mathcal M(S)}$ its Deligne-Mumford compactification. 
When $S$ is not connected $\mathcal M(S)$ and $\overline{\mathcal M(S)}$ 
is the product of the corresponding stacks associated to the connected components of $S$. 

Let $\underline{c}$ be an essential multicurve on $\Sigma_g^n$. 
Strata of $\Mgnbar$ are indexed by the topological type of the corresponding stable curves, or equivalently by orbits of the multicurves $\underline{c}$
under $\Mod_g^n$. 
If $S$ is a Riemann surface we denote by 
$S_{\underline{c}}$ the surface obtained by pinching curves in $\underline{c}$ 
to points which will be considered as marked points (or punctures) on $S_{\underline{c}}$. We also denote by $S\setminus \underline{c}$ the open surface obtained by cutting $S$ along the curves in $\underline{c}$, which can be identified with a punctured compact surface.  
  
Let $\Delta_{\underline{c}} \to \Mgnbar$ be the codimension $e$ stratum of the Deligne-Mumford stratification associated to the stable curve $S_{\underline{c}}$. This map is an immersion of a locally closed smooth substack. There exists a disjoint collection of 
pairwise non isotopic and disjoint essential simple curves in $\Sigma_g^n$ so that $\Delta_{\underline{c}}$ has an \'etale covering by 
$\mathcal{M}(S\setminus \underline{c})$ where $S$ is as in Proposition \ref{fus}. 

Let us assume for notation simplicity  $p$ is odd, the modifications being immediate  if $p$ is even. 

The preimage of the stratum $\Delta_{\underline{c}}$ in $X_p$ is a locally closed submanifold $i_p(\Delta_{\underline{c}}): \Delta_{\underline{c}}(X_p) \to X_p$  
which is \'etale-covered by a stack (actually a quasi projective manifold) equivalent to  $\Delta_{\underline{c}}\times_{\Mgnbar[p]} X_p$. 

If we divide by ${\rm Gal}(X_p/\Mgnbar[p])$  we get a substack 
$i(\Delta_{\underline{c}}[p]): \Delta_{\underline{c}}[p]\to \Mgnbar[p]$ which is actually $\Delta_{\underline{c}}[p]= \Delta_{\underline{c}} \times_{\Mgnbar[p]} \Mgnbar$ and $\Delta_{\underline{c}}(X_p)$ is a finite uniformization of $\Delta_{\underline{c}}[p]$.

\begin{lem}\label{stratum}
The smooth Deligne-Mumford  stack $\Delta_{\underline{c}}[p]$ is an \'etale gerbe over $\Delta_{\underline{c}}$ banded by an \'etale sheaf of  abelian groups which is locally equivalent to the constant sheaf with value $(\Z\slash p\Z) ^{e}$.
 
\end{lem}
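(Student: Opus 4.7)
The plan is to reduce the statement to a purely local computation around a point of $\Delta_{\underline{c}}$, combining the standard versal deformation of a stable curve with $e$ nodes with the root-stack description of $\Mgnbar[\mathbf{k}]$ proved in \cite{mcgorb}.

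First I would fix a geometric point $[C] \in \Delta_{\underline{c}}$ whose underlying stable curve $C$ has $e$ nodes $q_1,\ldots,q_e$, corresponding to the components of $\underline{c}$, and use the local structure of $\Mgnbar$ at $[C]$: there is an étale neighborhood of $[C]$ in $\Mgnbar$ with coordinates $x_1,\ldots,x_e,y_1,\ldots,y_d$ (where $d=3g-3+n-e$) such that the boundary divisor $\{x_i=0\}$ parametrizes deformations in which the node $q_i$ persists. The local monodromy around $\{x_i=0\}$ is the Dehn twist $T_{c_i}$, whose conjugacy class depends only on the topological type $j(i)\in\{0,\ldots,N_{g,n}-1\}$ of $c_i$.

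Next I would invoke the description of $\Mgnbar[\mathbf{k}]$ recalled in \cite{mcgorb}: étale-locally near a boundary point this stack is obtained from $\Mgnbar$ by the $k_{j(i)}$-th root stack construction along each of the local branches $\{x_i=0\}$. In the case of the lemma (all exponents equal to $p$, the even case is analogous), this identifies an étale neighborhood of the preimage of $[C]$ in $\Mgnbar[p]$ with the chart
\[
\bigl[\mathrm{Spec}\,\C[u_1,\ldots,u_e,y_1,\ldots,y_d]\;\big/\;\mu_p^e\bigr] \longrightarrow \mathrm{Spec}\,\C[x_1,\ldots,x_e,y_1,\ldots,y_d],
\]
where $u_i^p=x_i$ and $\mu_p^e$ acts diagonally on $(u_1,\ldots,u_e)$. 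Restricting to the stratum by setting $x_1=\cdots=x_e=0$, equivalently $u_1=\cdots=u_e=0$, the chart becomes $[\mathrm{Spec}\,\C[y_1,\ldots,y_d]/\mu_p^e]\to\mathrm{Spec}\,\C[y_1,\ldots,y_d]$, which is the trivial $\mu_p^e$-gerbe since the group now acts trivially on the base. This is exactly the assertion that $\Delta_{\underline{c}}[p]\to\Delta_{\underline{c}}$ is étale-locally a $\mu_p^e$-gerbe, and after fixing a primitive $p$-th root of unity $\mu_p^e\simeq(\Z/p\Z)^e$.

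Finally I would discuss globalization. The local model produces, at each étale neighborhood, a band isomorphic to $(\Z/p\Z)^e$; gluing these along overlaps shows that the band is an étale sheaf on $\Delta_{\underline{c}}$ which is locally constant with fibre $(\Z/p\Z)^e$. It need not be globally constant because the monodromy of $\pi_1(\Delta_{\underline{c}})$ may permute the $e$ labelled nodes of the generic stable curve (and thus the $e$ local branches of the boundary at $[C]$); this permutation representation is precisely what twists the local constant sheaves into the global band. The only nontrivial ingredient is the root-stack description of $\Mgnbar[\mathbf{k}]$ near the boundary, which I would cite from \cite{mcgorb}; the remainder is formal.
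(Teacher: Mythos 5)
Your argument is essentially the paper's own proof, which simply cites the étale-local description of $\Mgnbar[p]$ near the boundary from \cite{mcgorb}: the root-stack/quotient chart $[\mathrm{Spec}\,\C[u_1,\ldots,u_e,y_1,\ldots,y_d]/\mu_p^e]$ with $u_i^p=x_i$, restricted (with reduced structure) to the stratum, gives the $\mu_p^e\simeq(\Z/p\Z)^e$-gerbe. Your additional discussion of globalization and the possible monodromy twisting of the band is correct and just makes explicit what the paper leaves implicit.
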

\begin{proof}
 This follows from the local description of $\Mgnbar[p]$ given in \cite{mcgorb}. 
\end{proof}

\begin{lemma}
There exists a finite index subgroup  $H<\pi_1(\Delta_{\underline{c}}[p])$ which is identified to a quotient of $\PMod(S\setminus\underline{c})$ by a  subgroup contained in   $\ker(\iota_S^*\bar \rho_{g,p, (\mathbf{i})})$. 
 This identification carries $i(\Delta_{\underline{c}}[p])^*\rho_{g,p,(\mathbf{i})}|_H$ to $\iota_{S\setminus \underline{c}}^*\bar \rho_{g,p, (\mathbf{i})}$.
\end{lemma}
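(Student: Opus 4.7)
The argument proceeds by identifying $H$ through the natural map from $\PMod(S\setminus\underline{c})$ into $\pi_1(\Delta_{\underline{c}}[p])$ and then applying the fusion rules of Proposition \ref{fus} to match representations.

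Using Lemma \ref{stratum} together with the local analytic description of $\overline{\mathcal{M}_g^n}[p]$ near $\Delta_{\underline{c}}$ as $[\mathbb{D}^d/\mu_p^e]$, the stratum $\Delta_{\underline{c}}[p]$ is an étale gerbe over $\Delta_{\underline{c}}$ banded by $(\Z/p\Z)^e$. Pulling back the gerbe along the finite étale cover $\mathcal{M}(S\setminus\underline{c})\to \Delta_{\underline{c}}$ and choosing a section (after passing to a further finite cover if necessary), one obtains a lift of the map $\PMod(S\setminus\underline{c})=\pi_1(\mathcal{M}(S\setminus\underline{c}))\to \pi_1(\Delta_{\underline{c}})$ to a homomorphism $\PMod(S\setminus\underline{c})\to \pi_1(\Delta_{\underline{c}}[p])$. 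Define $H$ to be its image; it is a finite index subgroup of $\pi_1(\Delta_{\underline{c}}[p])$, and by construction a quotient of $\PMod(S\setminus\underline{c})$.

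Next, determine the kernel $K$ of $\PMod(S\setminus\underline{c})\twoheadrightarrow H$. Let $x_i,y_i$ denote the pair of new punctures on $S\setminus\underline{c}$ arising from cutting $c_i\in\underline{c}$. Under the composition $\PMod(S\setminus\underline{c})\xrightarrow{\iota_{S\setminus\underline{c}}}\PMod(\Sigma_g^n)\twoheadrightarrow \pi_1(\overline{\mathcal{M}_g^n}[p])$, both $T_{x_i}$ and $T_{y_i}$ are mapped to the class of $T_{c_i}$, since the small loops around $x_i$ and $y_i$ glue back to $c_i$; hence $T_{x_i}T_{y_i}^{-1}\in K$. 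Moreover, the root stack construction forces $T_{c_i}^p=1$ in $\pi_1(\overline{\mathcal{M}_g^n}[p])$, so $T_{x_i}^p,T_{y_i}^p\in K$. The local model for $\overline{\mathcal{M}_g^n}[p]$ around $\Delta_{\underline{c}}$ combined with Lemma \ref{stratum} confirms that $K$ is generated by these relations.

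It remains to verify $K\subseteq \ker(\iota_S^*\bar\rho_{g,p,(\mathbf{i})})$ and the compatibility of representations. First, $\Mod(\Sigma_g^n)[p]\subseteq \ker(\bar\rho_{g,p,(\mathbf{i})})$ immediately yields $T_{x_i}^p,T_{y_i}^p\in \ker(\iota_S^*\bar\rho_{g,p,(\mathbf{i})})$. For $T_{x_i}T_{y_i}^{-1}$, apply Proposition \ref{fus}: the restriction $\widetilde{\iota_S}^*\widetilde{\rho_{g,p,(\mathbf{i})}}$ decomposes as $\bigoplus_{\mathbf{j}\in \mathcal{C}_p^m}\widetilde{\rho_{S,(\mathbf{i}\amalg \mathbf{j})}}$, and within each summand the two boundary circles associated to $c_i$ carry the common color $j_{c_i}$, so the boundary Dehn twists $T_{\partial_{x_i}}$ and $T_{\partial_{y_i}}$ act by the same scalar, making $T_{x_i}T_{y_i}^{-1}$ act trivially projectively. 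The identification $i(\Delta_{\underline{c}}[p])^*\rho_{g,p,(\mathbf{i})}|_H=\iota_{S\setminus \underline{c}}^*\bar\rho_{g,p,(\mathbf{i})}$ is then immediate, since both sides are obtained from the same restriction of the Reshetikhin-Turaev representation through the inclusion of the complementary subsurface.

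The main obstacle is the precise verification in the second step that $K$ is generated exactly by the relations $T_{x_i}T_{y_i}^{-1}$ and $T_{x_i}^p$, with no further hidden relations contributed by the gerbe structure of $\Delta_{\underline{c}}[p]$. This requires carefully matching the band $(\Z/p\Z)^e$ of the gerbe with the images of the Dehn twists around the new punctures modulo $p$-th powers, which relies crucially on the local description of the root stack $\overline{\mathcal{M}_g^n}[p]$ from \cite{mcgorb}.
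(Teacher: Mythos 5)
There is a genuine gap, and it lies in your very first step: the construction of $H$ by trivializing the gerbe. The gerbe $\Delta_{\underline{c}}[p]\to\Delta_{\underline{c}}$ of Lemma \ref{stratum} is in general \emph{not} trivializable after pulling back to $\mathcal{M}(S\setminus\underline{c})$: its band $(\Z/p\Z)^e$ is exactly the image of the twists along the cut curves, so the corresponding extension of $\pi_1$ is typically non-split. Your fallback of ``passing to a further finite cover if necessary'' does not rescue the argument, because it would only produce a homomorphism from a finite-index subgroup of $\PMod(S\setminus\underline{c})$, and hence an $H$ which is a quotient of that subgroup rather than of $\PMod(S\setminus\underline{c})$ itself, which is weaker than the statement. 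The paper avoids any splitting: it takes $H$ to be the \emph{full preimage} of $f(\PMod(S\setminus\underline{c}))$ under $\pi_1(\Delta_{\underline{c}}[p])\to\pi_1(\Delta_{\underline{c}})$ (so the band is contained in $H$ and finite index is automatic), and uses the local description of $\Mgnbar[p]$ from \cite{mcgorb} to see that the inclusion-induced map $\pi_1(\Delta_{\underline{c}}[p])\to\pi_1(\Mgnbar[p])$ is covered by $\PMod(S\setminus\underline{c})/\PMod(S\setminus\underline{c})[p]\to \Mod_g^n/\Mod_g^n[p]$; the identification of the representations is then exactly Proposition \ref{fus}.

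Your second step compounds the problem. You attempt to compute the kernel $K$ exactly (generated by $T_{x_i}T_{y_i}^{-1}$ and $p$-th powers) and you yourself flag that you cannot rule out further relations coming from the gerbe; but this computation is also logically misdirected. First, membership of an element in $K$ cannot be deduced from its image in $\pi_1(\Mgnbar[p])$: that only controls the kernel of the composite $\PMod(S\setminus\underline{c})\to\pi_1(\Delta_{\underline{c}}[p])\to\pi_1(\Mgnbar[p])$, and $\pi_1(\Delta_{\underline{c}}[p])\to\pi_1(\Mgnbar[p])$ need not be injective. Second, with the paper's convention that $S\setminus\underline{c}$ is a \emph{punctured} surface, the twists $T_{x_i},T_{y_i}$ about punctures are already trivial in $\PMod(S\setminus\underline{c})$; the elements you manipulate live in the cut surface with boundary, a different group, so the kernel analysis conflates the two. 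Finally, the exact determination of $K$ is unnecessary for the lemma: once one has a surjection $\PMod(S\setminus\underline{c})\twoheadrightarrow H$ compatible with the maps to $\pi_1(\Mgnbar[p])$, the containment $K\subseteq\ker(\iota_S^*\bar\rho_{g,p,(\mathbf{i})})$ is formal, because $\bar\rho_{g,p,(\mathbf{i})}$ factors through $\pi_1(\Mgnbar[p])$. So the needed inputs are the structural facts about $\Delta_{\underline{c}}[p]$ and the fusion rules, not a presentation of $K$; as written, your construction of $H$ does not exist in general and the kernel verification you identify as ``the main obstacle'' is indeed an unfilled gap.
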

 \begin{proof}
The image of the homomorphism 
$f:\pi_1(\mathcal M(S\setminus \underline{c}))\to \pi_1(\Delta_{\underline{c}})$ has finite index in $\pi_1(\Delta_{\underline{c}})$. Define $H$ to be the 
preimage of $f(\PMod(S\setminus\underline{c}))$  
within $\pi_1(\Delta_{\underline{c}}[p])$. 
The inclusion map $i(\Delta_{\underline{c}}[p])$ induces a homomorphism   $\pi_1(\Delta_{\underline{c}}[p])\to \pi_1(\Mgnbar[p])$, which is 
covered by the homomorphism 
$\PMod(S\setminus\underline{c})/\PMod(S\setminus\underline{c})[p]\to
\Mod_g^n/\Mod_g^n[p]$ induced by $\iota_{S\setminus\underline{c}}$. 
Then the fusion rules from Proposition \ref{fus} prove the claim.  
\end{proof}

\begin{lem} There is a finite central extension of $H$ where the projective ambiguity of $\eta=i(\Delta_{\underline{c}}[p])^*\rho_{g,p,(\mathbf{i})}|_H$ is resolved by a representation $\widetilde {\eta}$, 
which identifies to  a quotient of $\widetilde{\PMod(S\setminus \underline{c})}$. The identification carries $\widetilde {\eta}$ to  $\iota_{S\setminus \underline{c}} ^* \widetilde{\rho_{g,p,(\mathbf{i})}}$. 
\end{lem}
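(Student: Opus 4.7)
The approach is to pull back the central extension $\widetilde{\PMod(S\setminus\underline{c})}\to\PMod(S\setminus\underline{c})$ and then quotient out the part of its kernel that acts trivially under the linear lift. Write $P=\PMod(S\setminus\underline{c})$, $\widetilde P=\widetilde{\PMod(S\setminus\underline{c})}$ with central kernel $Z\cong\Z^{c}$, one $\Z$ per connected component. Set $\widetilde\rho:=\iota_{S\setminus\underline{c}}^{*}\widetilde{\rho_{g,p,(\mathbf{i})}}:\widetilde P\to GL(W)$; by the fusion rules (Proposition~\ref{fus}) this is genuinely linear. The preceding lemma supplies a surjection $\pi:P\twoheadrightarrow H$ with $N:=\ker\pi\subset\ker(\iota_{S\setminus\underline{c}}^{*}\bar\rho_{g,p,(\mathbf{i})})$; let $\widetilde N\subset\widetilde P$ denote its preimage.

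The central extension is then built as follows. First I would check that $\widetilde\rho(\widetilde N)$ lies in the scalars $\C^{*}\cdot\mathrm{Id}$, which is immediate from $\bar\rho(N)=\{1\}$, and that it is moreover a finite group. The latter comes from the fact that the difference between the linear and the projective Reshetikhin-Turaev representations is controlled by $\widetilde\rho(Z)$, a finite cyclic group whose generators act on the natural basis of the space of conformal blocks by $2p$-th roots of unity, as recorded in the excerpt. Set $K:=\ker(\widetilde\rho\vert_{\widetilde N})$ and define $\widetilde H:=\widetilde P/K$. Normality of $K$ in $\widetilde P$ follows from normality of $\widetilde N$ and $\widetilde\rho$ being a homomorphism. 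The quotient fits in $1\to\widetilde N/K\to\widetilde H\to H\to 1$, and this is a central extension with finite cyclic kernel: centrality follows from $\widetilde\rho([g,n])=\widetilde\rho(g)\widetilde\rho(n)\widetilde\rho(g)^{-1}\widetilde\rho(n)^{-1}=1$ for $n\in\widetilde N$ and $g\in\widetilde P$, since $\widetilde\rho(n)$ is scalar and commutes with $\widetilde\rho(g)$; this forces $[g,n]\in K$.

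Finally $\widetilde\rho$ descends through $K$ to the sought-after linear representation $\widetilde\eta:\widetilde H\to GL(W)$, and $\widetilde\eta$ projectivizes to $\eta$ by construction. Since $\widetilde H$ is exhibited as a quotient of $\widetilde P=\widetilde{\PMod(S\setminus\underline{c})}$ and $\widetilde\eta$ is the descent of $\widetilde\rho$, the identification automatically carries $\widetilde\eta$ to $\iota_{S\setminus\underline{c}}^{*}\widetilde{\rho_{g,p,(\mathbf{i})}}$. The step requiring the most care is the finiteness of $\widetilde N/K$; this is precisely the statement that the scalar subgroup of $\widetilde\rho(\widetilde P)$ is finite cyclic, which one verifies via Schur's lemma applied to each irreducible summand $\widetilde{\rho_{S\setminus\underline{c},(\mathbf i\amalg\mathbf j)}}$ of $\widetilde\rho$, combined with the $2p$-th root of unity description of $\widetilde\rho(Z)$.
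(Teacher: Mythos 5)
Your construction (pull back along $\widetilde{\PMod(S\setminus\underline{c})}\to\PMod(S\setminus\underline{c})$, observe that lifts of elements of $N$ act by scalars, quotient by $K=\ker(\widetilde\rho|_{\widetilde N})$, check normality and centrality) is exactly the natural one, and it is presumably what the authors intend by ``Clear''; the verifications that $K\triangleleft\widetilde P$ and that $\widetilde N/K$ is central in $\widetilde P/K$, and the descent of $\widetilde\rho$ to $\widetilde\eta$ with the stated compatibility, are all correct.

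The one step that is not actually proved is the one you yourself flag: finiteness of the central kernel $\widetilde N/K\cong\widetilde\rho(\widetilde N)\subset\C^{*}$. Schur's lemma says nothing about \emph{which} scalars occur in the image, and the root-of-unity description of $\widetilde\rho(Z)$ only controls the subgroup $Z\cong\Z^{c}$, whereas $\widetilde N$ is the full preimage of $N$, so $\widetilde N/Z\cong N$ and a priori a lift of an element of $N$ could act by a scalar of infinite order; nothing in your argument excludes this. The claim is nevertheless true, and can be closed in (at least) two ways. (i) The character $n\mapsto\lambda(n)$ on $\widetilde N$ defined by $\widetilde\rho(n)=\lambda(n)\,\mathrm{Id}$ is invariant under conjugation by all of $\widetilde P$ (conjugation fixes scalars), so its image is generated by $\lambda(Z)$ together with the values on lifts of a set of normal generators of $N$ in $P$; by the construction of $H$ in the preceding lemma these normal generators can be taken to be (powers of) Dehn twists, and lifts of Dehn twists have finite order under $\widetilde\rho$ (their eigenvalues are roots of unity of order bounded in terms of $p$, cf. \cite{GM2}), so $\lambda(\widetilde N)$ is generated by finitely many roots of unity of bounded order, hence finite. (ii) Alternatively, note that $\widetilde P$ is generated by $Z$ and lifts of Dehn twists, so $\det\circ\widetilde\rho$ has finite image; for a scalar $\lambda\,\mathrm{Id}_{W}$ in the image this forces $\lambda^{\dim W}$ to lie in a fixed finite group of roots of unity, so the whole scalar subgroup of $\widetilde\rho(\widetilde P)$ is finite (one can also argue via integrality \cite{GM} and Kronecker's theorem). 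With either repair your proof is complete.
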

\begin{proof}
Clear. 
\end{proof}

\begin{coro} \label{contcurvstrat} Assume $C \to \Delta_{\underline{c}}[p]$ is a curve which is mapped to a point by $sh_{\rho_{g,p}}$.  Then there exists 
a connected component $S'$ of $S\setminus \underline{c}$ 
and a curve $D \to \mathcal{M}(S')$ such that $D$ is mapped 
to a point by $sh_{\rho_{g(k),p}}$.
 
\end{coro}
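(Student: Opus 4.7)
The plan is to translate the contraction of $C$ by the Shafarevich morphism on $X_p$ into the contraction of a curve on one of the factors of $\mathcal{M}(S\setminus\underline{c})$, using the fusion rule (Proposition \ref{fus}) as the bridge between $\rho_{g,p}$ on the big mapping class group and the quantum representations of the smaller mapping class groups of the components of $S\setminus\underline{c}$.

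First, I would pass to a finite \'etale cover $\tilde C \to C$ so that the image of $\pi_1(\tilde C)$ in $\pi_1(\Delta_{\underline{c}}[p])$ lies inside the finite index subgroup $H$ provided by the two preceding lemmas, and so that $\tilde C$ lifts, after a further finite \'etale cover if necessary, to $\mathcal M(S\setminus\underline{c})=\prod_{k}\mathcal{M}(S_k)$, where $S_k$ runs over the connected components. Because $C$ is a non-constant curve in $\Delta_{\underline{c}}[p]$, the lifted map is non-constant, so at least one projection $D_k\colon\tilde C\to\mathcal{M}(S_k)$ is a non-constant curve; this is the component $S':=S_{k_0}$ and the curve $D:=D_{k_0}$ called for in the statement.

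Next I would translate the hypothesis via the fusion rule. The assumption that $sh_{\rho_{g,p}}$ contracts $C$ says that $\rho_{g,p,(\mathbf{i})}(\pi_1(C))$ is finite for every admissible coloring $(\mathbf{i})$, hence the central lift $\widetilde{\rho_{g,p,(\mathbf{i})}}$ has finite image on $\pi_1(\tilde C)$. By the previous lemma this coincides with $\iota_{S\setminus\underline{c}}^{*}\widetilde{\rho_{g,p,(\mathbf{i})}}$ evaluated on the image of $\pi_1(\tilde C)$ in $\widetilde{\PMod(S\setminus\underline{c})}$, and Proposition \ref{fus} decomposes it as
\[
\iota_{S\setminus\underline{c}}^{*}\widetilde{\rho_{g,p,(\mathbf{i})}}
\;=\; \bigoplus_{\mathbf{j}\in\mathcal{C}_p^{|\underline{c}|}}
\;\boxtimes_{k}\;\widetilde{\rho_{S_k,\,(\mathbf{i}\amalg\mathbf{j})|_{S_k}}},
\]
where $\boxtimes$ denotes the external tensor product along $\widetilde{\PMod(S\setminus\underline{c})}=\prod_k \widetilde{\PMod(S_k)}$. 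Finiteness of the image of $\pi_1(\tilde C)$ in the full direct sum then forces finiteness of the image in the single factor $\widetilde{\rho_{S',(\cdot)}}$ composed with the projection $\pi_1(\tilde C)\twoheadrightarrow \pi_1(D)\to\PMod(S')$, which is exactly the assertion that $D$ is contracted by $sh_{\rho_{g(k_0),p}}$.

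The hard part will be the bookkeeping at this last step: from the finiteness of an external tensor product representation on $\pi_1(\tilde C)$ one must genuinely deduce finiteness of each individual factor restricted to the corresponding projection $\pi_1(D_k)$, and not merely on the product image in $\prod_k\PMod(S_k)$. One also has to handle with care the passage between the central extension $\widetilde{\rho}$ and the projective $\rho_{g(k_0),p}$, so that the conclusion transfers to the untilded side where Shafarevich morphisms are defined. The further finite \'etale covers taken along the way cause no loss, since non-constancy of $\tilde C$ in $\mathcal{M}(S\setminus\underline{c})$ forces non-constancy in one of the factors.
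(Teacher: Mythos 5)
Your proposal follows essentially the same route as the paper, where the corollary is deduced directly from the preceding two lemmas identifying a finite-index subgroup $H<\pi_1(\Delta_{\underline{c}}[p])$ with a quotient of $\PMod(S\setminus \underline{c})$ (resp.\ of $\widetilde{\PMod(S\setminus \underline{c})}$) together with the fusion rules of Proposition \ref{fus}: pass to a finite cover landing in $H$, decompose the pulled-back representation over colorings and components, and take a component on which the induced map is non-constant. The ``hard part'' you flag is only a scalar ambiguity in the external tensor factors, and it is harmless here: the scalars involved are $2p$-th roots of unity and the Shafarevich morphism is taken with respect to the adjoint (projectivized) linearizations of $\rho_{g,p}$, so finiteness up to scalars on each factor is exactly what is needed.
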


 \subsection{Conclusion of the proof}
 
 Thanks to Proposition \ref{gibney}, Theorem \ref{theointro1} follows by induction from Lemma \ref{critstein} and the subsequent remark, from Corollary \ref{contcurvstrat} and from Proposition \ref{infiniteness}.
 
\subsection{A contraction}
In case $p=5$, families of  elliptic tails are contracted by $sh_{M_5}$, in particular it is a non trivial birational contraction.
If $n=0$ the morphism  $s_5: \overline{M_g} \to S$ given by the descent of $sh_{M_5}$ contracts precisely the divisor $\delta_1$  and factors through the divisorial contraction given by the big semiample class $11\lambda-\delta$ 
constructed by  \cite{CH}. 

In particular our statement is optimal for $p$ odd.  It is easy to see $\Mgnbar[5]$ satisfies the Shafarevich conjecture if $2g-2+n>0$.

\section{Proofs of Corollaries \ref{origami} and \ref{corointrod2}}\label{appteichcurves}

\subsection{Flat surfaces}
We seek for applications of previous results to the affine diffeomorphism group of a Veech surface. 
A punctured surface endowed with a geometric structure modeled on the complex plane and the group of its translations along with the $-1_{\C}$ is usually called a {\em flat} (or {\em half-translation}) surface if around the punctures the chart maps are of the form $z\to z^k$, $k\in \N^*$. It corresponds to the space obtained by identifying pairwise the edges of a collection of planar polygons by means of translations and $-1_{\C}$, where vertices give raise to punctures. A flat surface has an induced Riemann surface structure $X$ and the squared differentials of chart maps glue together to a well-defined holomorphic quadratic differential $q$, whose zero-set is the set of punctures. As it is well-known, conversely $(X,q)$ also defines a translation surface by means of the polygon associated to the periods of $\sqrt{q}$. Two flat surfaces are the same if there exists a holomorphic diffeomorphism between them which identifies the quadratic differentials. 
If the flat surface $(X,q)$ is obtained by 
the identification of pairs of parallel edges in a planar polygon $P$, 
and $h\in SL(2,\R)$, then define the translation surface $h\cdot (X,q)$
as the result of the same identification in the polygon $h(P)$. 
This is a well-defined action of $SL(2,\R)$ on the space $\mathcal H_g$
of translation surfaces of genus $g$.  When $q=\omega^2$, where 
$\omega$ is a holomorphic 1-form we retrieve a {\em translation} surface, 
namely one for which we only need translations and one can dispose of 
$-1_{\C}$. 

\subsection{Veech surfaces}
The Veech group $PSL(X,q)$ of $(X,q)$ is the image in $PSL(2,\R)$ of its  stabilizer $SL(X,q)$ with respect to the $SL(2,\R)$ action on $\mathcal H_g$. It is well-known that Veech groups are discrete subgroups in $PSL(2,\R)$ which are not cocompact.  
A flat surface is a {\em Veech surface} if its Veech group is a lattice in $PSL(2,\R)$.

Branched coverings of the flat torus with ramification above a single point 
are square-tiled surfaces, which provide the simplest 
examples of translation surface, also called {\em origamis}. 
As their Veech groups are commensurable with $PSL(2,\Z)$ all origamis are 
Veech surfaces. 
More generally, the $SL(2,\R)$ orbit of $(X,q)$ 
contains a square-tiled surface if and only if $SL(X,q)$ is 
$SL(2,\R)$-conjugate to a finite index subgroup of $PSL(2,\Z)$, by a result of Gutkin-Judge (\cite{GJ}).  

\subsection{Teichm\"uller curves}

The map sending $SL(2,\R)$ into the orbit of $(X,q)$ in $\mathcal H_g$ induces a natural map: 
\[ \phi_{(X,q)}:\mathbb H\to \mathcal T_g^n\]
where $\mathbb H$ is the upper half-plane and the number of punctures 
$n$ is the number of zeroes of $q$. As it is now well-known, 
see e.g. \cite{Ve}, 
$\phi_{(X,q)}$ is injective, isometric with respect to the 
hyperbolic metric on $\mathbb H$ and the Teichm\"uller metric on 
$\mathcal T_g^n$ and also holomorphic with respect to the 
natural complex structure on $\mathcal T_g^n$. Its image is called 
a Teichm\"uller disk centered at $(X,q)$. 
Then $\phi_{(X,q)}$ induces a holomorphic map:
\[ \Phi_{(X,q)}:\mathbb H/PSL(X,q)\to \mathcal M_g^n\]
called a Teichm\"uller curve. 
If $(X,q)$ is a Veech surface, then image of $\Phi_{(X,q)}$ is an algebraic curve,  whose lift to the moduli space of abelian differentials is affine for the natural affine structure,  which is normalized 
by $\mathbb H/PSL(X,q)$. In this case $\phi_{(X,q)}$ is 
proper and generically injective.

Let ${\rm Aff}^+(X,q)$ denote the group of orientation preserving affine diffeomorphisms of $(X,q)$, which coincides with the 
stabilizer of the Teichm\"uller disk in $\mathcal T_g^n$ 
with respect to the natural $\Mod_g^n$-action. The derivative map $D$ of an affine diffeomorphism provides us a surjective homomorphism 
encoded in an exact sequence: 

\[ 1\to {\rm Aut}(X,q)\to {\rm Aff}^+(X,q)\stackrel{D}{\to} SL(X,q)\to 1.\]
The kernel ${\rm Aut}(X,q)$ consists of the pointwise stabilizer of the Teichm\"uller disk. It follows that  ${\rm Aut}(X,q)$  is finite, when $g>1$.

\begin{proposition}\label{Veech}
Let $(X,q)$ be a Veech surface. Then $\rho_{g,p}({\rm Aff}^+(X,q))$ is infinite 
for odd $p\geq 7$ and even  $p\geq 14, p\not\in\{20,24\}$. 
\end{proposition}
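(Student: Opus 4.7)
The plan is to realize $\mathrm{Aff}^+(X,q)$, up to a finite group, as the monodromy of a non-isotrivial algebraic family of curves over the Teichm\"uller curve of $(X,q)$, and then invoke Theorem \ref{corointro1}.

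Since $(X,q)$ is Veech, $PSL(X,q)$ is a lattice in $PSL(2,\R)$ and $C := \H/PSL(X,q)$ is a quasi-projective algebraic orbifold curve of finite hyperbolic area. By Selberg's lemma I choose a torsion-free normal finite-index subgroup $\Gamma \le PSL(X,q)$ and pass to the smooth quasi-projective curve $C_\Gamma := \H/\Gamma$. The Teichm\"uller embedding $\phi_{(X,q)}\colon \H \to \mathcal{T}_g^n$, where $n$ is the number of zeros of $q$, descends to an algebraic morphism $\Phi\colon C_\Gamma \to \Mgn$ (algebraicity of Teichm\"uller curves attached to Veech surfaces being classical), and post-composing with the forgetful map $\Mgn \to \Mg$ yields an algebraic morphism $\widetilde\Phi\colon C_\Gamma \to \Mg$.

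The equivariance of $\phi_{(X,q)}$ under $D\colon\mathrm{Aff}^+(X,q) \to SL(X,q)$ on the source and the natural inclusion $\mathrm{Aff}^+(X,q) \hookrightarrow \Mod(\Sigma_g^n)$ on the target identifies the monodromy $\widetilde\Phi_*\colon \pi_1(C_\Gamma) = \Gamma \to \Mod(\Sigma_g)$ with the image of the finite-index subgroup $\Gamma' := D^{-1}(\widetilde\Gamma) \subset \mathrm{Aff}^+(X,q)$, where $\widetilde\Gamma$ is any lift of $\Gamma$ to $SL(X,q)$, composed with the forgetful map $\Mod(\Sigma_g^n)\to\Mod(\Sigma_g)$. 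For $g \ge 2$ the latter has kernel intersecting $\mathrm{Aff}^+(X,q)$ inside the finite group $\Aut(X,q)$, so $\widetilde\Phi_*(\pi_1(C_\Gamma))$ has finite index in the image of $\mathrm{Aff}^+(X,q)$ in $\Mod(\Sigma_g)$. Moreover $\widetilde\Phi$ is non-isotrivial, as $\Gamma$ contains hyperbolic elements whose affine lifts in $\Gamma'$ are pseudo-Anosov mapping classes.

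For $p$ in the stated range, which coincides with the range of Proposition \ref{infiniteness}, the ``in particular'' clause of Theorem \ref{corointro1} applied to the positive-dimensional algebraic subvariety $\widetilde\Phi(C_\Gamma) \subset \Mg$ yields that $\rho_{g,p}(\widetilde\Phi_*(\pi_1(C_\Gamma)))$ is infinite. Since $\widetilde\Phi_*(\pi_1(C_\Gamma))$ has finite index in the image of $\mathrm{Aff}^+(X,q)$, the image $\rho_{g,p}(\mathrm{Aff}^+(X,q))$ must itself be infinite. The main obstacle is the bookkeeping in the second step: identifying $\widetilde\Phi_*(\pi_1(C_\Gamma))$ as a finite-index subgroup of the image of $\mathrm{Aff}^+(X,q)$ inside $\Mod(\Sigma_g)$ requires controlling the derivative sequence $1 \to \Aut(X,q) \to \mathrm{Aff}^+(X,q) \to SL(X,q) \to 1$, the $\{\pm 1\}$-ambiguity between $SL$ and $PSL$, and the kernel of the forgetful map $\Mod(\Sigma_g^n) \to \Mod(\Sigma_g)$; once this is set up, the conclusion is a direct application of Theorem \ref{corointro1}.
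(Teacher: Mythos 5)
Your proposal is correct and follows essentially the same route as the paper: the Teichm\"uller curve of the Veech surface $(X,q)$ is a non-isotrivial algebraic family of curves whose monodromy is (up to finite index and finite kernel) ${\rm Aff}^+(X,q)$, and Theorem \ref{corointro1} then gives infiniteness of the quantum image. The paper states this in two lines, working directly with the orbifold curve $\H/PSL(X,q)$ mapping to $\mathcal{M}_g^n$, whereas you add the harmless bookkeeping of passing to a torsion-free finite-index subgroup via Selberg and forgetting the punctures; the only tiny imprecision is that the kernel of ${\rm Aff}^+(X,q)\to\Mod(\Sigma_g)$ is the finite preimage $D^{-1}(\{\pm 1\})$ rather than exactly ${\rm Aut}(X,q)$, which does not affect the argument.
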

\begin{proof}
The image of the fundamental group of the Veech surface under the 
algebraic map $\Phi_{(X,q)}$ which sends it into the Teichm\"uller curve 
is the group of affine diffeomorphisms of $(X,q)$. 
Then Theorem \ref{corointro1} implies the claim.  
\end{proof}

\begin{proof}[Proof of Corollary \ref{origami}]
Let $(X,q)$ be the origami Veech surface  associated to the 
ramified covering $\Sigma_g\to \Sigma_1$ of the torus $\Sigma_1$ branched over a single point $p\in \Sigma_1$. 
The corresponding unramified covering over  $\Sigma_1^*=\Sigma_1-\{p\}$ is  
characterized by a finite index subgroup $H\triangleleft \pi_1(\Sigma_1^*)\cong\mathbb F_2$. 
The Veech group $SL(X,q)$ of the origami  was described by 
Schmith\"usen (\cite{Sch}) as the image of the stabilizer of $H$ in ${\rm Aut}^+(\pi_1(\Sigma_1^*))$ into ${\rm Out}^+(\pi_1(\Sigma_1^*))\cong SL(2,\Z)$. 
It follows that ${\rm Aff}^+(X,q)$ coincides with the subgroup of mapping classes of 
those homeomorphisms of $\Sigma_g$ which lift homeomorphisms of $\Sigma_1$, namely the group $G$ from the statement. Then Proposition \ref{Veech} permits to conclude.    
\end{proof}

\subsection{Thurston's construction and proof of Corollary \ref{corointrod2}}
Consider multicurves $\underline{c}$ and $\underline{d}$ in 
minimal position on $\Sigma_g^n$ such that 
$\underline{c}\cup\underline{d}$ is connected. They are filling a subsurface $S$ if they are contained in $S$ and the complement  $S\setminus (\underline{c}\cup \underline{d})$ consists of a simply connected 
polygonal regions with at least 4 sides. Let us index components of 
$\underline{c}=(\gamma_i)_{1\leq i\leq m}$ and $\underline{d}=(\gamma_j)_{m+1\leq j\leq m+k}$. 

We will outline below a classical construction of Thurston (\cite[section 6]{Th}, see also \cite{Ve}) of a flat surface $(X,q)$  such that the corresponding Veech group contains $G(\underline{c},\underline{d})$.

If curves in $\underline{c}$ and $\underline{d}$ have parallel copies, let us assume they are associated the multiplicities $d_i\in\N$, for $1\leq i\leq m+k$.  Let $N$ be the 
associated geometric intersection matrix 
$N=(d_ii(\gamma_i,\gamma_j)_{1\leq i,j\leq m+k}$, where $N_{ii}=0$. 
Then $N$ is a Perron-Frobenius matrix and there exists an unique 
positive unit eigenvector $v$ such that $Nv=\mu v$, for some positive $\mu$.  

Consider the rectangles $R_p=[0,v_i]\times [0,v_j]\subset \C$ 
for every intersection point $p\in \gamma_i\cap \gamma_j$. 
We glue together $R_p$  to $R_q$ along the vertical or horizontal side whenever $p$ and $q$ are joined by an edge in $\underline{c}$ and $\underline{d}$, respectively, in the graph $\underline{c}\cup\underline{d}$.  
The differentials $dz^2$ on each rectangle glue together to a well-defined quadratic differential on the resulting surface $X$. 

Note that $q$ has a square root if and only if we can orient the curves $\gamma_i$ such that their geometric and algebraic intersection numbers 
coincide.

Define the multitwists 
$T_{\underline{c}}=\prod_{i=1}^m T_{\gamma_i}^{d_i}$ and 
$T_{\underline{d}}=\prod_{i=m+1}^{m+k} T_{\gamma_i}^{d_i}$. 
Then, Thurston proved in \cite[section 6]{Th} 
that $\langle T_{\underline{c}}, T_{\underline{d}}\rangle$ is a subgroup of ${\rm Aff}^+(X,q)$. Moreover, 
\[DT_{\underline{c}}=\left(\begin{array}{cc}
1 & \mu \\
0 & 1\\
\end{array}
\right), \; \; DT_{\underline{d}}=\left(\begin{array}{cc}
1 & 0 \\
-\mu & 1\\
\end{array}
\right).
\]
Furthermore $D\phi$ is elliptic, parabolic or Anosov if and only if 
$\phi$ is finite order, reducible (and actually a root of a positive 
multitwist) or pseudo-Anosov, respectively.

By a theorem of Bers and Kra (see \cite[Thm.6]{Bers} and 
\cite[Thm.1]{Kra}) for any pseudo-Anosov mapping class $\phi$ 
there exists an unique Teichm\"uller disk which is stabilized by $\phi$. 
If $(X,q)$ is any flat surface, then the multicurve systems for two different decompositions in cylinders and their moduli define by the construction above an affine equivalent flat surface. Thus every 
Teichm\"uller curve is associated to a pair of multicurves $\underline{c}$ and $\underline{d}$ along with multiplicities $d_i$. 
Conversely if $\mu\leq 2$ then $(X,q)$ generates a Teichm\"uler curve, namely its Veech group is a lattice. 
  
Nevertheless, it might happen that the subgroup generate by 
the Dehn twists $\langle T_{\underline{c}}, T_{\underline{d}}\rangle$ 
be of infinite index in the Veech group $SL(X,q)$.  Specifically Leininger showed in \cite[Thm. 7.1]{Lein} that: 

\begin{theorem}\label{leininger}
Let $\underline{c},\underline{d}$ be multicurves filling the surface $S$. 
The group $G(\underline{c},\underline{d})$  has finite index within the Veech group of the flat surface $(X,q)$ if and only if its configuration graph $\mathcal G(\underline{c}\cup\underline{d})$ is critical or recessive. 
\end{theorem}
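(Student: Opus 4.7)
The plan is to reduce the theorem to a question about the image $DG(\underline c, \underline d)$ inside $SL(X,q) \subset SL(2,\R)$ of the derivative map $D: {\rm Aff}^+(X,q) \to SL(X,q)$. Since $\ker D = {\rm Aut}(X,q)$ is finite for $g \geq 2$, the finite-index property transfers faithfully: $G(\underline c, \underline d)$ has finite index in the Veech group if and only if $DG(\underline c, \underline d)$ has finite index in $SL(X,q)$. From Thurston's formulas in the excerpt, $DG$ is generated by the two parabolic matrices $\bigl(\begin{smallmatrix} 1 & \mu \\ 0 & 1 \end{smallmatrix}\bigr)$ and $\bigl(\begin{smallmatrix} 1 & 0 \\ -\mu & 1 \end{smallmatrix}\bigr)$, which depend \emph{only} on the Perron--Frobenius eigenvalue $\mu$ of the weighted intersection matrix $N$, and this $\mu$ coincides with the spectral radius of the configuration graph $\mathcal G(\underline c \cup \underline d)$.

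The key ingredient is the classical Coxeter--Smith--Vinberg classification of graphs by spectral radius: a connected graph has spectral radius $\leq 2$ if and only if it is a Dynkin diagram (strict inequality, recessive: $A_n, D_n, E_6, E_7, E_8$) or an extended Dynkin diagram (equality, critical: $P_{2n}, Q_n, R_7, R_8, R_9$). So the theorem reduces to the purely Fuchsian-group statement that $DG$ has finite index in $SL(X,q)$ exactly when $\mu \leq 2$.

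For the easy direction ($\mu \leq 2$), the product $T_{\underline c} T_{\underline d}$ has derivative with trace $2 - \mu^2 \in [-2,2]$, hence is parabolic or elliptic. When $\mu = 2$ the group $DG$ contains (a conjugate of) the principal congruence subgroup $\Gamma(2)$, so $DG$ is already a lattice in $PSL(2,\R)$; when $\mu < 2$ a direct Klein-combination argument (or case-by-case matching of trace relations $2 - \mu^2 = -2\cos(2\pi/h)$ with $h$ the Coxeter number) identifies $DG$ with a finite-index subgroup of a Hecke/triangle group, again a lattice. In both subcases, $DG$ is Fuchsian of the first kind, and since the ambient Veech group $SL(X,q)$ is a discrete Fuchsian group containing the lattice $DG$, a covolume comparison forces $[SL(X,q) : DG] < \infty$.

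The harder direction ($\mu > 2$) is where the main obstacle lies. Then ${\rm tr}(Da \cdot Db) = 2 - \mu^2 < -2$, so $Da\cdot Db$ is hyperbolic and a ping-pong/Schottky argument shows $DG$ is free, discrete, and of infinite covolume in $PSL(2,\R)$. One must still rule out the possibility that $SL(X,q)$ itself is ``small'' --- that it coincides up to finite index with $DG$. I would do this by producing parabolic elements of $SL(X,q)$ outside $DG$: the $DG$-orbit of the cylinder directions has limit set a proper Cantor subset of $\partial \mathbb H$, while the Veech structure of $(X,q)$ supplies further cylinder decompositions, and therefore further parabolic affine symmetries, with slopes outside that limit set. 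Combinatorially, these extra cylinder decompositions are detected directly from the richer geometry of $\mathcal G(\underline c \cup \underline d)$ when it is neither critical nor recessive. This final step --- extracting enough affine parabolics of $(X,q)$ from combinatorial data of the configuration graph to guarantee infinite index --- is the delicate part of the argument and the main technical obstacle.
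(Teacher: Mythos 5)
The paper offers no proof of this statement: it is quoted verbatim as Theorem 7.1 of Leininger \cite{Lein}, so the only ``proof'' in the source is the citation, and you are in effect attempting to reprove the main technical theorem of that paper. Within your attempt, the reduction to the derivative group $DG=\langle P_\mu,Q_\mu\rangle\subset SL(X,q)$ (finite kernel of $D$, so finite index transfers), the identification of $\mu$ with the spectral radius of $\mathcal G(\underline c\cup\underline d)$, and the use of the Smith--Coxeter classification to translate ``recessive/critical'' into $\mu<2$ / $\mu=2$ are all correct. Your argument for the implication ``$\mu\le 2\Rightarrow$ finite index'' is sound: discreteness of $DG$ is inherited from $SL(X,q)$, the Dynkin case forces $\mu=2\cos(\pi/h)$ so that $DG$ is a lattice, the affine case gives $\Gamma(2)$, and a lattice contained in a discrete subgroup of $PSL(2,\R)$ automatically has finite index by covolume comparison.

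The genuine gap is the converse, $\mu>2\Rightarrow$ infinite index, which you yourself flag as ``the main technical obstacle'' and do not carry out. This is not a deferable verification; it is the substantive half of the theorem. For $\mu>2$ the group $DG$ is free, discrete and of the second kind with Cantor limit set, so nothing a priori prevents $SL(X,q)$ from being a finite extension of $DG$ --- the entire content of the statement in this case is that Thurston's surface always admits affine symmetries not visible from $\underline c$ and $\underline d$, and enough of them to force infinite index. Your proposed mechanism (exhibit a parabolic of $SL(X,q)$ whose fixed point lies outside $\Lambda(DG)$, then invoke the fact that finite-index Fuchsian subgroups have equal limit sets) is a legitimate strategy, but you give no construction of such an element: one must actually produce a further completely periodic direction with commensurable cylinder moduli and verify that its slope avoids the limit set, which is precisely the hard flat-geometry input supplied in \cite{Lein}. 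As written, your proposal establishes only one of the two implications.
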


\begin{proof}[Proof of Corollary \ref{corointrod2}]
Consider the subsurface $S'$ of genus $g'$ which is filled by the curves in $\underline{c}\cup\underline{d}$. We shall prove that the restriction $\rho_{g,p}$ to the image of $\PMod(S')$ within $\PMod(S)$ has infinite image. It is enough to show that $\rho_{g',p}(G(\underline{c},\underline{d}))$ is infinite, by applying the fusion rules.  
But this follows from proposition \ref{Veech} and Leininger's theorem 
\ref{leininger} above.   
\end{proof}

\section{Proof of Theorem \ref{H2}}
\subsection{Relative cohomological classifying maps}
For a connected paracompact space $X$, the universal covering $\widetilde X\to X$ is a 
principal $\pi_1(X)$-bundle whose isomorphism class defines a classifying map 
$f_X:X\to B\pi_1(X)$, which is unique up to homotopy. 
Then $f_X$ induces well-defined homomorphisms 
$f_X:H_*(X)\to H_*(\pi_1(X))$ and their duals 
$f_X^*: H^*(\pi_1(X))\to H^*(X)$.

\begin{lemma}
The homomorphisms $f_X:H_2(X)\to H_2(\pi_1(X))$  and 
 $f_X^*:H^2(\pi_1(X))\to H^2(X)$ 
are natural.
\end{lemma}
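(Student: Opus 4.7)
The plan is to exhibit the naturality at the level of classifying maps, then apply $H_2$ and $H^2$ formally. Fix basepoints and let $g: X \to Y$ be a continuous map of pointed connected paracompact spaces. Then $g$ induces a homomorphism $g_*: \pi_1(X) \to \pi_1(Y)$, which in turn induces a map $Bg_*: B\pi_1(X) \to B\pi_1(Y)$ between Eilenberg-MacLane spaces, well-defined up to based homotopy. I would prove that the diagram
\[
\begin{array}{ccc}
X & \xrightarrow{f_X} & B\pi_1(X) \\
{\scriptstyle g}\downarrow & & \downarrow{\scriptstyle Bg_*} \\
Y & \xrightarrow{f_Y} & B\pi_1(Y)
\end{array}
\]
commutes up to homotopy, which immediately yields the naturality of $f_X$ on $H_2$ and of $f_X^*$ on $H^2$ by applying the (contravariant or covariant) homology/cohomology functor.

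The homotopy-commutativity of this square is the content of the classical fact that a $K(\pi,1)$ represents the functor of $\pi$-valued ``$\pi_1$-data'' on the homotopy category of pointed connected CW-complexes (or paracompact spaces, after replacing $X$ by a CW-approximation). More concretely, both composites $Bg_* \circ f_X$ and $f_Y \circ g$ are pointed continuous maps from $X$ to $B\pi_1(Y)$, and they induce the same homomorphism on $\pi_1$, namely $g_*$, since $f_X$ and $f_Y$ induce the identity (up to the canonical isomorphism $\pi_1(B\pi)=\pi$) and $Bg_*$ induces $g_*$ by construction. Because $B\pi_1(Y)$ has vanishing higher homotopy groups, obstruction theory forces two such maps to be homotopic.

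I would then upgrade this to the desired naturality by taking $H_2$ of the homotopy-commutative square, obtaining the commutative square
\[
\begin{array}{ccc}
H_2(X) & \xrightarrow{f_{X,*}} & H_2(\pi_1(X)) \\
{\scriptstyle g_*}\downarrow & & \downarrow{\scriptstyle g_*} \\
H_2(Y) & \xrightarrow{f_{Y,*}} & H_2(\pi_1(Y))
\end{array}
\]
and, dually, the corresponding square in $H^2$. Here I use the standard identification $H_*(B\pi)=H_*(\pi)$ and the fact that under this identification $(Bg_*)_*$ corresponds to the induced map $g_*$ in group homology.

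The only real delicate point is the obstruction-theoretic argument that two maps into a $K(\pi,1)$ inducing the same map on $\pi_1$ are homotopic; but for paracompact $X$ this is classical (reduce to CW approximation if one wishes), and it constitutes the heart of the lemma.
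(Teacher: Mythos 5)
Your argument is correct, but it is not the route the paper takes. You prove homotopy-naturality of the classifying map itself: both composites $X\to B\pi_1(Y)$ induce the same homomorphism on $\pi_1$, and since $B\pi_1(Y)$ is aspherical, obstruction theory (after CW approximation, and up to the usual conjugation ambiguity for unbased maps, which is invisible in group (co)homology) makes the square of classifying maps commute up to homotopy; applying $H_2$ and $H^2$ then gives naturality in one stroke, and in fact in every degree. The paper instead stays in degree two: it invokes the naturality of the Hurewicz map together with Hopf's exact sequence $\pi_2(X)\to H_2(X)\to H_2(\pi_1(X))\to 0$, which exhibits $f_X$ on $H_2$ as the projection onto the cokernel of Hurewicz, so naturality on $H_2$ follows from naturality of Hurewicz, and the $H^2$ statement is then obtained ``dually.'' Your approach buys more: it treats homology and cohomology (with any coefficients) simultaneously and in all degrees, and it makes the cohomological naturality immediate rather than resting on a dualization step that, strictly speaking, needs a universal-coefficients or analogous argument. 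What the paper's approach buys is economy of means: no classifying-space uniqueness or obstruction theory, just two standard exact-sequence facts, which is all that is needed for the degree-two applications (the five-term sequence and the relative class $f_{X,Y}^*$) that follow. The only point to keep an eye on in your version is the paracompact-versus-CW issue you already flag, together with the identification of the paper's $f_X$ (classifying the universal cover as a principal $\pi_1(X)$-bundle) with a map to $K(\pi_1(X),1)$ inducing the canonical isomorphism on $\pi_1$; both are standard for the spaces actually used later (manifolds and complements of subpolyhedra).
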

\begin{proof}
The Hurewicz homomorphisms
$h_X:\pi_*(X)\to H_*(X)$ are natural, i.e. a continuous map $\phi: Y\to X$ provides a commutative diagram:  
\[
\begin{array}{ccc}
\pi_2(Y) & \stackrel{\phi_*}{\to} & \pi_2(X)\\
h_X\downarrow & & \downarrow h_Y\\ 
H_2(Y) & \stackrel{\phi_*}{\to} & H_2(X)\\
\end{array}
\]
Now, a classical result of Hopf states that 
there exists an exact sequence: 
\[\pi_2(X)\stackrel{h_X}{\to} H_2(X)\stackrel{f_X}{\to} H_2(\pi_1(X))\to 0\]
This implies that $f_X$ are natural: 
\[
\begin{array}{ccc}
H_2(Y) & \stackrel{\phi_*}{\to} & H_2(X)\\
f_X\downarrow & & \downarrow f_Y\\ 
H_2(\pi_1(Y)) & \stackrel{\phi_*}{\to} & H_2(\pi_1(X))\\
\end{array}
\]
Dually $f_X^*$ are also natural.  
\end{proof}

Our next goal is to define the analog of maps $f_X$ in a relative context. 
Let now $X$ be a manifold of dimension at least 3 and $D\subset X$ be a polyhedron  of codimension at least 2. 
We denote its complement by $Y=X-D$. The inclusion $\phi: Y\to X$ induces therefore
an epimorphism $\phi: \pi_1(Y)\to \pi_1(X)$ whose kernel we denote by $K$. 
We have the following diagram with exact rows: 

\[
\begin{array}{rcccccccc}
0 \to H^1(\pi_1(X))& \to& H^1(\pi_1(Y)) &\to &H^1(K)^{\pi_1(Y)} &\stackrel{\delta}{\to} & H^2(\pi_1(X))& \stackrel{\phi^*}{\to}& H^2(\pi_1(Y))\\
\downarrow f_X^* & & \downarrow f_Y^*&  & & &f_X^*\downarrow & & \downarrow f_Y^*\\ 
H^1(X,Y) \to  H^1(X)& \to & H^1(Y)& \stackrel{\partial}{\to}& H^2(X,Y)&\stackrel{i}{\to} &H^2(X)&\stackrel{\phi^*}{\to}& H^2(Y)\\
\end{array}
\]
The top row is the five term exact sequence in cohomology associated to the exact sequence 
\[ 1\to K\to \pi_1(Y)\to \pi_1(X)\to 1\]
and the bottom row is the long exact sequence in cohomology associated to the 
pair $(X,Y)$. 

\begin{lemma}
Assume that $i$ is injective. Then there is a natural map 
\[ f_{X,Y}^*: H^1(K)^{\pi_1(Y)}\to H^2(X,Y)\]
so that all squares in the diagram above are commutative.  
In particular this is so if $H^1(Y)=0$. 
\end{lemma}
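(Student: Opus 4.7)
The plan is to construct $f_{X,Y}^*$ by a short diagram chase in which the injectivity of $i$ forces the middle vertical arrow to be the unique lift of $f_X^*\circ\delta$ through $i$. Before that, the ``in particular'' clause is immediate: if $H^1(Y)=0$, then $\partial:H^1(Y)\to H^2(X,Y)$ is the zero map, so $\ker i=\mathrm{im}\,\partial=0$ by exactness of the bottom row.

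For the main construction, given $\alpha\in H^1(K)^{\pi_1(Y)}$, I consider $f_X^*(\delta(\alpha))\in H^2(X)$. Exactness of the top row gives $\phi^*\delta(\alpha)=0$, and applying $f_Y^*$ together with the naturality of the absolute classifying map (the rightmost square, which is already established) yields
\[ \phi^*\bigl(f_X^*(\delta(\alpha))\bigr)=f_Y^*\bigl(\phi^*(\delta(\alpha))\bigr)=0. \]
By exactness of the bottom row, $f_X^*(\delta(\alpha))\in\mathrm{im}(i)$, and injectivity of $i$ produces a unique element $f_{X,Y}^*(\alpha)\in H^2(X,Y)$ characterised by $i\circ f_{X,Y}^*(\alpha)=f_X^*\circ\delta(\alpha)$. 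This defines $f_{X,Y}^*$, and linearity is automatic from the uniqueness of the lift together with the linearity of $\delta$, $f_X^*$ and $i$.

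It remains to verify the two new squares and naturality. The square involving $\delta$ and $i$ commutes by construction. For the adjacent square relating the restriction $\mathrm{res}:H^1(\pi_1(Y))\to H^1(K)^{\pi_1(Y)}$ to $\partial:H^1(Y)\to H^2(X,Y)$, I would apply $i$ to both compositions: one gives $f_X^*\circ\delta\circ\mathrm{res}=0$ (since $\delta\circ\mathrm{res}=0$ by exactness of the top row), the other gives $i\circ\partial\circ f_Y^*=0$ by exactness of the bottom row, and the injectivity of $i$ identifies the two. Naturality of $f_{X,Y}^*$ with respect to morphisms of pairs satisfying the standing hypothesis is then forced by the uniqueness of the lift, once one uses the naturality of $\delta$, $f_X^*$ and $i$, which are standard properties of the Hochschild--Serre transgression and of the relative cohomology long exact sequence. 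I do not foresee a genuine obstacle: the entire content of the lemma is the observation that the injectivity of $i$ upgrades the vanishing $\phi^*f_X^*\delta=0$ into a canonical map landing in the relative cohomology.
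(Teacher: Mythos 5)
Your construction is exactly the paper's: define $f_{X,Y}^*(\alpha)=i^{-1}\bigl(f_X^*(\delta(\alpha))\bigr)$, justified by $\phi^*f_X^*\delta(\alpha)=f_Y^*\phi^*\delta(\alpha)=0$ and exactness of the bottom row, with injectivity of $i$ giving the unique lift. Your additional checks (the square with $\partial$ via applying $i$, linearity, and the observation that $H^1(Y)=0$ forces $\ker i=\mathrm{im}\,\partial=0$) are correct and only make explicit what the paper leaves implicit.
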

\begin{proof}
If $x\in H^1(K)^{\pi_1(Y)}$ we define 
\[ f_{X,Y}^*(x)=i^{-1}(f_X^*(\delta(x)))\]
This is a well-defined homomorphism making the diagram above commutative if 
$f_X^*(\delta(x)))$ belongs to $i(H^2(X,Y))$. 
As the top row is exact we have: 
\[ \delta(H^1(K)^{\pi_1(Y)})=\ker(H^2(\pi_1(X)) \stackrel{\phi^*}{\to} H^2(\pi_1(Y)))\]
The functoriality of $f_X^*$ implies that 
\[ \phi^*(f_X^*(\delta(H^1(K)^{\pi_1(Y)})))=0\]
and the exactness of the bottom row implies that 
\[ f_X^*(\delta(H^1(K)^{\pi_1(Y)}))\subseteq i(H^2(X,Y))\]
as needed. 
\end{proof}

\subsection{Hochschild-Serre exact sequence}
We now apply the previous construction to the moduli stack of curves. 

Observe that $N_{g,n}$ is also the minimal number of normal generators 
of $\Mod_g^n[p]$ within $\Mod_g^n$. 
We now follow the idea used in \cite{FP2} to analyse the 
image of the quantum representation. Specifically, we have a general upper bound:

\begin{lemma}\label{bound}
Let $\{a_1,a_2,\ldots,a_{N_{g,n}}\}$ be a minimal system of 
normal generators for $\Mod_g^n[p]$ within 
$\Mod_g^n$. 
Then the evaluation 
homomorphism 
\[E:{\rm Hom}(\Mod_g^n[p], \R)^{\Mod_g^n}
\to \R^{N_{g,n}},\]
given by 
$ E(f)= (f(a_1),f(a_2),\ldots, f(a_{N_{g,n}}))$
is injective. 
\end{lemma}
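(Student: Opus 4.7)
The statement is essentially a formal consequence of two facts: that $\R$ is abelian, and that the $a_i$ are a \emph{normal} generating set of $\Mod_g^n[p]$ inside $\Mod_g^n$. My plan is to unfold the definition of the evaluation map and chase what the invariance hypothesis forces on a kernel element.

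First I would spell out what it means for $f \in \mathrm{Hom}(\Mod_g^n[p], \R)^{\Mod_g^n}$ to be $\Mod_g^n$-invariant. Here the action of $\Mod_g^n$ on $\Mod_g^n[p]$ is by conjugation (inner in the ambient group), and the action on $\R$ is trivial, so the condition is simply $f(g x g^{-1}) = f(x)$ for every $g \in \Mod_g^n$ and every $x \in \Mod_g^n[p]$. Equivalently, $f$ descends to a homomorphism on the quotient of $\Mod_g^n[p]$ by the normal closure (in $\Mod_g^n$) of all commutators $[g, x]$.

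Next I would argue injectivity directly. Suppose $E(f) = 0$, i.e.\ $f(a_i) = 0$ for $i = 1, \ldots, N_{g,n}$. By $\Mod_g^n$-invariance, $f(g a_i g^{-1}) = f(a_i) = 0$ for every $g \in \Mod_g^n$ and every $i$. Since the $a_i$ normally generate $\Mod_g^n[p]$ in $\Mod_g^n$, every element $x \in \Mod_g^n[p]$ can be written as a finite product $x = \prod_k (g_k a_{i_k}^{\epsilon_k} g_k^{-1})$ with $\epsilon_k = \pm 1$. Because $f$ is a group homomorphism into the abelian group $\R$,
\[
f(x) = \sum_k \epsilon_k\, f(g_k a_{i_k} g_k^{-1}) = \sum_k \epsilon_k \cdot 0 = 0,
\]
so $f \equiv 0$, proving that $\ker E = 0$.

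There is essentially no obstacle here beyond assembling the hypotheses in the right order; the proof is short because the target is abelian, so $f$ is controlled by its values on any normal generating set, and $\Mod_g^n$-invariance reduces knowing $f$ on a conjugacy-class worth of elements to knowing it on a single representative. I would end by noting that this bound $\dim\mathrm{Hom}(\Mod_g^n[p], \R)^{\Mod_g^n} \leq N_{g,n}$ is what will later be paired, via the relative cohomology map $f_{X,Y}^*$ constructed in the previous lemma and Dahmani's recent work, with lower bounds coming from the geometry of $\overline{M_g^n}[\mathbf k]$ to produce classes in $H^2(\Mod_g^n/\Mod_g^n[\mathbf k], \R)$.
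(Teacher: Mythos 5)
Your proof is correct and is essentially the paper's own argument: both use that every element of $\Mod_g^n[p]$ is a product of $\Mod_g^n$-conjugates of the $a_i$ (and their inverses), so conjugacy invariance of $f$ together with the abelian target $\R$ forces $f$ to vanish once it vanishes on the $a_i$. Your write-up is just slightly more explicit about the signs $\epsilon_k=\pm 1$ and the kernel-chasing formulation of injectivity.
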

\begin{proof}
Any element $x\in \Mod_g^n[p]$ is a product 
$x=\prod_i g_ia_ig_i^{-1}$, for some $g_i\in \Mod_g^n$.
Since $f\in {\rm Hom}(\Mod_g^n[p],\R)^{\Mod_g^n}$ is conjugacy 
invariant we have 
$ f(x)=\sum_{i}f(g_ia_ig_i^{-1})=\sum_if(a_i)$ and the Lemma follows.
\end{proof}

We can now compare the second cohomology to a module of invariants: 

\begin{proposition}\label{hom}
We have an exact sequence
\[ 0\to {\rm Hom}(\Mod_g^n[p], \R)^{\Mod_g^n}\stackrel{\partial}{\to} 
H^2(\Mod_g^n/\Mod_g^n[p], \R)\to 
H^2(\Mod_g^n,\R).\] 
In particular $\dim H^2(\Mod^n_g/\Mod^n_g[p],\R))\leq n+1+N_{g,n}$, if $g\geq 4$.
\end{proposition}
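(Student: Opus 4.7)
The plan is to apply the Hochschild--Serre five-term inflation-restriction exact sequence in group cohomology to the short exact sequence
\[ 1 \to \Mod_g^n[p] \to \Mod_g^n \to \Mod_g^n/\Mod_g^n[p] \to 1, \]
with coefficients in the trivial $\Mod_g^n$-module $\R$. This yields
\[ 0 \to H^1(\Mod_g^n/\Mod_g^n[p], \R) \to H^1(\Mod_g^n, \R) \to H^1(\Mod_g^n[p], \R)^{\Mod_g^n/\Mod_g^n[p]} \stackrel{\partial}{\to} H^2(\Mod_g^n/\Mod_g^n[p], \R) \to H^2(\Mod_g^n, \R). \]
The conjugation action of $\Mod_g^n/\Mod_g^n[p]$ on the normal subgroup $\Mod_g^n[p]$ is the one induced from the inner action of $\Mod_g^n$, so the invariant module identifies canonically with $\mathrm{Hom}(\Mod_g^n[p], \R)^{\Mod_g^n}$.

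To get exactness at the leftmost term of the claimed sequence, I would invoke the classical fact that $H^1(\Mod_g^n, \R) = 0$: for $g \geq 2$ the abelianization $H_1(\Mod_g, \Z)$ is finite (Mumford for $g=2$, Powell for $g\geq 3$), and the $n$-punctured case follows by an easy induction on $n$ using the Birman exact sequence, since the kernel $\pi_1(\Sigma_g^{n-1})$ has finite abelianization modulo the mapping-class action (alternatively, one cites Korkmaz's computation). This vanishing forces $\partial$ to be injective, giving precisely the desired four-term exact sequence.

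For the dimensional bound, I would combine two inputs. First, Lemma \ref{bound} produces an injection $E : \mathrm{Hom}(\Mod_g^n[p], \R)^{\Mod_g^n} \hookrightarrow \R^{N_{g,n}}$, whence $\dim \mathrm{Hom}(\Mod_g^n[p], \R)^{\Mod_g^n} \leq N_{g,n}$. Second, Harer's stability theorem together with the Madsen--Weiss / Mumford--Miller--Morita computation gives $\dim H^2(\Mod_g^n, \Q) = n+1$ for $g\geq 4$, a basis being provided by the first Miller--Morita--Mumford class $\kappa_1$ (equivalently $12\lambda$) and the $n$ psi-classes $\psi_1, \dots, \psi_n$. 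Exactness of the sequence then forces
\[ \dim H^2(\Mod_g^n/\Mod_g^n[p], \R) \leq \dim \mathrm{Hom}(\Mod_g^n[p], \R)^{\Mod_g^n} + \dim H^2(\Mod_g^n, \R) \leq N_{g,n} + n + 1. \]

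The only delicate point is the correct identification of the co-invariants in the five-term sequence as the $\Mod_g^n$-conjugacy-invariant homomorphisms; once this is spelled out the rest reduces to citations from Harer--Ivanov-type stability results together with Lemma \ref{bound}. The main obstacle is therefore not conceptual but rather ensuring the cited vanishing and computation of $H^*(\Mod_g^n, \R)$ apply in the stated range $g \geq 4$, which is why the dimensional bound in the statement is restricted to $g\geq 4$ whereas the exact sequence itself holds for all $g\geq 2$.
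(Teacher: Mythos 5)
Your proof is correct and follows essentially the same route as the paper: the Hochschild--Serre five-term exact sequence for $1\to \Mod_g^n[p]\to \Mod_g^n\to \Mod_g^n/\Mod_g^n[p]\to 1$, the vanishing $H^1(\Mod_g^n,\R)=0$ to get injectivity of $\partial$, the identification $H^2(\Mod_g^n,\R)\cong\R^{n+1}$, and Lemma \ref{bound} to bound the invariant module by $N_{g,n}$. The extra citations you supply (Powell/Korkmaz for $H^1$, Harer stability for $H^2$) simply make explicit the standard facts the paper invokes without comment.
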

\begin{proof}
The  5-term exact sequence in cohomology associated to the exact sequence  
\[ 1\to \Mod_g^n[p] \to\Mod_g^n \to \Mod_g^n/\Mod_g^n[p]\to 1,\]
gives us:
\[ H^1(\Mod_g^n,\R)\to {\rm Hom}(\Mod_g^n[p], \R)^{\Mod_g^n}\stackrel{\partial}{\to} 
H^2(\Mod_g^n/\Mod_g^n[p], \R)\to 
H^2(\Mod_g^n,\R).\]
Now $H^1(\Mod_g^n,\R)=0$ and 
$H^2(\Mod_g^n,\R)=\R^{n+1}$. Therefore  
\[ \dim {\rm Hom}(\Mod_g^n[p], \R)^{\Mod_g^n}\leq 
\dim H^2(\Mod_g^n/\Mod_g^n[p],\R))\leq n+1+\dim {\rm Hom}(\Mod_g^n[p], \R)^{\Mod_g^n}. \]
Then lemma \ref{bound} shows that 
$\dim{\rm Hom}(\Mod_g^n[p], \R)^{\Mod_g^n}\leq n+1+N_{g,n}$ and Proposition \ref{hom} follows. 
\end{proof}

\subsection{Presentations for $\Mod_g^n[p]$} 
Before going further recall that the geometric intersection 
$i(\gamma,\gamma')\in \N$ of two isotopy classes of 
simple closed curves $\gamma$ and $\gamma'$ is the minimal number of intersection points between curves in the given classes.

We shall use now the following result of Dahmani from \cite{Da}:
\begin{theorem}
There exists some $p_0$ (depending on $g$ and $n$) such that if $p$ is divisible by $p_0$, then  
the group $\Mod_g^n[p]$ has an infinite presentation as follows: 
\begin{enumerate}
\item the generators are $x_{\gamma}$, where $\gamma$ belongs to a set  
$\Omega_p(\Sigma_g^n)$ of representatives for the isotopy classes 
of simple closed curve on the surface $\Sigma_g^n$ modulo $\Mod_g^n[p]$;  
\item the relations are  
\[ [x_{\gamma}, x_{\gamma'}]=1, \; {\rm if }\;  i(\gamma,\gamma')=0\] 
\end{enumerate}
\end{theorem}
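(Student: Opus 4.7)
The plan is to apply the group-theoretic Dehn filling / small cancellation machinery developed by Dahmani, Guirardel, and Osin (DGO) to the action of $\Mod_g^n$ on the curve graph $\mathcal{C}(\Sigma_g^n)$. By the Masur--Minsky theorem this graph is $\delta$-hyperbolic, and each Dehn twist $T_\gamma$ fixes the vertex $\gamma$; via the Bestvina--Bromberg--Fujiwara projection complex attached to the family of annular subsurface projections, $\langle T_\gamma\rangle$ acts as a rotational stabilizer of an apex point associated with $\gamma$. The collection of these apex stabilizers is $\Mod_g^n$-equivariant under conjugation, so the whole family $\{\langle T_\gamma\rangle\}_\gamma$ is a $\Mod_g^n$-invariant family of elliptic subgroups of a hyperbolic action.

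The first step is to exhibit an integer $p_0 = p_0(g,n)$ such that for any $p$ divisible by $p_0$, the family $\{\langle T_\gamma^p\rangle\}_{\gamma}$ becomes a \emph{very rotating family} in the DGO sense on the chosen hyperbolic model. Taking the $p$-th power multiplies the displacement induced by each Dehn twist at its apex by a factor growing with $p$; combining this with the Masur--Minsky/BBF estimates on annular projections, one chooses $p_0$ large compared to the hyperbolicity constant and the projection constants, uniformly over the conjugacy classes of non-separating and of each separating type of Dehn twist. Divisibility (rather than mere largeness) of $p$ is needed so that all conjugacy classes of twists are promoted simultaneously.

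The second step is to apply the DGO Dehn filling / rotating family theorem. Its output is that the normal closure $\langle\!\langle T_\gamma^p : \gamma \rangle\!\rangle = \Mod_g^n[p]$ is isomorphic to the free product of the rotation subgroups $\langle T_\gamma^p\rangle$ modulo precisely the relations coming from pairs of rotation subgroups whose apices lie in a common bounded stabilizer. For the curve graph action this ``compatibility'' between apices reduces exactly to the combinatorial condition that the two underlying curves can be realized disjointly, i.e.\ $i(\gamma,\gamma')=0$. Picking one representative from each $\Mod_g^n[p]$-orbit on isotopy classes of simple closed curves produces the set $\Omega_p$; the generator $x_\gamma$ is identified with $T_\gamma^p$, and the only surviving relations are the commutations $[x_\gamma,x_{\gamma'}]=1$ for disjoint representatives, yielding the advertised presentation.

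The main obstacle is the quantitative part: pinning down $p_0$ requires explicit control on the hyperbolicity constant of $\mathcal{C}(\Sigma_g^n)$ and on the acylindricity/projection constants, uniformly across all orbits of Dehn twists (both non-separating and the finitely many separating topological types). A further subtle point is to verify that no spurious relations survive the filling, beyond the stated pairwise commutations; that is, one must exclude the possibility that triples or higher configurations of pairwise disjoint curves introduce additional identities. This sharpness, together with the uniform quantitative rotating estimate, is the technical core of Dahmani's theorem \cite{Da}.
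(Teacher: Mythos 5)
The paper does not actually prove this statement: it is quoted verbatim as a theorem of Dahmani and the proof is entirely deferred to \cite{Da}, so there is no internal argument to compare yours against. Your outline does correctly identify the strategy of that cited work: one passes from the curve graph to a Bestvina--Bromberg--Fujiwara-type space built from annular subsurface projections, realizes the cyclic groups $\langle T_\gamma^p\rangle$ as rotation subgroups at apices, and chooses $p_0$ large relative to the hyperbolicity and projection constants, uniformly over the finitely many $\Mod(\Sigma_g^n)$-orbits of curves.

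There is, however, one substantive inaccuracy in your second step. The off-the-shelf Dahmani--Guirardel--Osin very rotating family theorem requires the apices to be pairwise far apart and concludes that the normal closure of the rotation subgroups is a \emph{free} product of conjugates of them, with no relations whatsoever; it does not come with a clause producing commutators for ``compatible'' apices. For the family of all Dehn twist powers this hypothesis fails outright: twists along disjoint curves commute and their apices are adjacent, and indeed $\Mod_g^n[p]$ is very far from a free product of cyclic groups. Handling a family of pairwise-commuting rotation subgroups whose apices are not separated, and showing that the only relations which survive are the pairwise commutations $[x_\gamma,x_{\gamma'}]=1$ for $i(\gamma,\gamma')=0$ (with no higher-order identities from larger collections of disjoint curves), is precisely the new ``plate spinning''/composite rotating family machinery that constitutes Dahmani's paper. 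So your proposal should be read as a faithful pr\'ecis of the proof strategy of \cite{Da} rather than as a proof: the step you describe as ``apply the DGO theorem'' is exactly the content that has to be established.
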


\begin{corollary}
When $p$ is divisible by $p_0$ the vector space ${\rm Hom}({\Mod}_g^n[p], \R)^{\Mod_g^n}$ is isomorphic to $\R^{N_{g,n}}$, 
where $N_{g,n}$ is the number of 
orbits of $\Mod_g^n$ on the set of isotopy classes of simple closed curves on $\Sigma_g^n$. 
\end{corollary}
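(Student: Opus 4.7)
The plan is to exhibit the evaluation map
\[
E\colon {\rm Hom}(\Mod_g^n[p], \R)^{\Mod_g^n} \longrightarrow \R^{N_{g,n}}
\]
of Lemma \ref{bound} as an isomorphism by providing, under Dahmani's hypothesis, an explicit two-sided inverse. Lemma \ref{bound} already gives injectivity, since an $\Mod_g^n$-invariant homomorphism is determined by its values on a system of normal generators, and one can choose such a system by picking the appropriate power of a Dehn twist $T_\gamma$ one per $\Mod_g^n$-orbit of simple closed curves. The remaining work is surjectivity.

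First I would fix, for each index $i \in \{1, \ldots, N_{g,n}\}$, a representative isotopy class $\gamma_i$ of a simple closed curve such that the $\Mod_g^n$-orbits are $\mathcal{O}_1, \ldots, \mathcal{O}_{N_{g,n}}$, and I would denote by $\mathcal{O}(\gamma)\in\{1,\ldots,N_{g,n}\}$ the index of the orbit containing $\gamma$. Given any tuple $(c_1,\ldots,c_{N_{g,n}}) \in \R^{N_{g,n}}$, define a set-theoretic map on Dahmani's generating set by $\tilde f(x_\gamma) := c_{\mathcal{O}(\gamma)}$ for every $\gamma \in \Omega_p(\Sigma_g^n)$. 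The only relations in Dahmani's presentation are the commutator relations $[x_\gamma, x_{\gamma'}] = 1$ whenever $i(\gamma,\gamma')=0$, and these are automatically satisfied by any map to the abelian group $\R$. Hence $\tilde f$ extends uniquely to a homomorphism $f\colon \Mod_g^n[p] \to \R$.

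It remains to check that $f$ is $\Mod_g^n$-invariant. Any homomorphism to $\R$ is invariant under inner automorphisms, so it suffices to verify invariance under the action of $\Mod_g^n/\Mod_g^n[p]$. For $h \in \Mod_g^n$, the element $h x_\gamma h^{-1}$ is the corresponding power of the Dehn twist $T_{h\gamma}$; writing $h\gamma = k \gamma'$ with $k \in \Mod_g^n[p]$ and $\gamma' \in \Omega_p$, one has $h x_\gamma h^{-1} = k\,x_{\gamma'}\,k^{-1}$, and $\gamma'$ lies in the same $\Mod_g^n$-orbit as $\gamma$. Since $f$ vanishes on commutators we get
\[
f(h x_\gamma h^{-1}) = f(x_{\gamma'}) = c_{\mathcal{O}(\gamma')} = c_{\mathcal{O}(\gamma)} = f(x_\gamma),
\]
which extends to $\Mod_g^n$-invariance on arbitrary elements of $\Mod_g^n[p]$ written as words in the generators. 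By construction $E(f) = (c_1,\ldots,c_{N_{g,n}})$, so $E$ is surjective and hence an isomorphism.

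The only potential subtlety is the bookkeeping between $\Mod_g^n[p]$-orbits of curves (indexing Dahmani's generators) and $\Mod_g^n$-orbits (indexing the coordinates of $\R^{N_{g,n}}$); but this is tautologically controlled by the fact that the $\Mod_g^n$-orbit of $\gamma$ is coarser than its $\Mod_g^n[p]$-orbit. Apart from this, the argument is essentially formal: the abelianness of $\R$ trivialises Dahmani's relations, and the $\Mod_g^n$-invariance is guaranteed by assigning the same value to all $\gamma$'s in a common $\Mod_g^n$-orbit.
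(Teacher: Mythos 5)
Your proof is correct and follows essentially the same route as the paper: both rest on the fact that Dahmani's presentation has only commutation relations, so homomorphisms to $\R$ are free assignments of values to the generators $x_\gamma$, and $\Mod_g^n$-invariance amounts to constancy on $\Mod_g^n$-orbits of curves. The paper identifies the invariant module directly as the functions on $\Omega_p(\Sigma_g^n)/\Mod_g^n$, whereas you package the same computation as an explicit inverse to the evaluation map of Lemma \ref{bound}; this is only a difference of presentation.
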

\begin{proof}
The free abelian group ${\rm Hom}({\Mod_g^n}[p], \R)$ is freely generated by 
$\Omega_p(\Sigma_g^n)$. Since the action of $\Mod_g^n$ on ${\rm Hom}({\Mod_g^n}[p], \R)$ is by conjugacy, the invariant module 
${\rm Hom}({\Mod}_g^n[p], \R)^{\Mod_g^n}$ is the free 
abelian group generated by $\Omega_p(\Sigma_g^n)/\Mod_g^n$. 
As $\Omega_p(\Sigma_g^n)/\Mod_g^n$ is finite of cardinal $N_g^n$, the 
claim follows. 
\end{proof}

This already implies that:
\begin{corollary}
If $g\geq 4$ and $p$ is divisible by $p_0$, then $H^2({\Mod_g^n/\Mod_g^n[p]},\R)\neq 0$. 
\end{corollary}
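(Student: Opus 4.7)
The deduction is essentially immediate from what has just been assembled. By the left-exact sequence of Proposition \ref{hom},
\[ 0\to {\rm Hom}(\Mod_g^n[p], \R)^{\Mod_g^n}\stackrel{\partial}{\to} H^2(\Mod_g^n/\Mod_g^n[p], \R)\to H^2(\Mod_g^n,\R), \]
the connecting map $\partial$ is injective, so it suffices to prove that its source is nonzero once $p$ is divisible enough by $p_0$.

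That however is precisely the content of the corollary immediately preceding the statement. Invoking Dahmani's presentation of $\Mod_g^n[p]$ from \cite{Da} (valid once $p_0\mid p$), the module ${\rm Hom}(\Mod_g^n[p],\R)^{\Mod_g^n}$ identifies with the free $\R$-vector space on $\Omega_p(\Sigma_g^n)/\Mod_g^n$, hence has dimension $N_{g,n}$. Since there is always at least one $\Mod_g^n$-orbit of essential simple closed curves when $g\geq 1$, we have $N_{g,n}\geq 1$; thus the leftmost term of the sequence above is nonzero, and by injectivity of $\partial$ so is $H^2(\Mod_g^n/\Mod_g^n[p],\R)$.

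The assumption $g\geq 4$ plays no role in this final step; it enters only through Proposition \ref{hom}, where it underwrites the standard computations $H^1(\Mod_g^n,\R)=0$ and $\dim H^2(\Mod_g^n,\R)=n+1$ (Harer/Powell) used to truncate the five-term exact sequence. Consequently there is no real obstacle remaining: the substantive difficulty was isolated in the two inputs already at hand, namely the five-term sequence plus the low-degree cohomology of $\Mod_g^n$ on the one side, and the nonvanishing of the invariant Hom module coming from Dahmani's infinite presentation on the other. The final corollary is then a one-line combination of these two results.
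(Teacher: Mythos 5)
Your proof is correct and follows exactly the route the paper intends (the paper gives no explicit proof, just the phrase ``This already implies,'' meaning precisely the combination of the injectivity of $\partial$ from Proposition \ref{hom} with the identification of ${\rm Hom}(\Mod_g^n[p],\R)^{\Mod_g^n}$ as $\R^{N_{g,n}}\neq 0$ from the preceding corollary). Your side remark about where $g\geq 4$ enters is also accurate.
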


\vspace{0.5cm}

\end{document}